\newtheorem{theorem}{Theorem}[section]
\newtheorem{lemma}[theorem]{Lemma}
\newtheorem{remark}[theorem]{Remark}
\newtheorem{definition}[theorem]{Definition}
\newcommand{\R}{\mathbb{R}  }
\newcommand{\pfeil}{ \rightarrow }
\newcommand{\into}{\hookrightarrow}
\renewcommand{\phi}{\varphi}
\renewcommand{\div}{\operatorname{div}}
\newcommand{\ljump}{ \llbracket  }
\newcommand{\rjump}{ \rrbracket }
\title[Two-phase Navier-Stokes/Mullins-Sekerka with boundary contact]{Well-Posedness and qualitative behaviour of a two-phase Navier-Stokes/Mullins-Sekerka system with boundary contact}
\author[Maximilian Rauchecker]{Maximilian Rauchecker}
\address{Maximilian Rauchecker, Fakult\"at f\"ur Mathematik, Universit\"at Regensburg, 93053 Regensburg, Germany}
\author[Mathias Wilke]{Mathias Wilke}
\address{Mathias Wilke,
Institut f\"ur Mathematik, Martin-Luther-Universit\"at Halle-Wittenberg, 06099 Halle, Germany}
\numberwithin{equation}{section} %Numbering style
\begin{document}
\maketitle
\begin{abstract}
We consider a coupled two-phase Navier-Stokes/Mullins-Sekerka system describing the motion of two immiscible, incompressible fluids inside a bounded container. The moving interface separating the liquids meets the boundary of the container at a constant ninety degree angle. This common interface is unknown and has to be determined as a part of the problem.

We show well-posedness and investigate the long-time behaviour of solutions starting close to certain equilibria. We prove that for equal densities these solutions exist globally in time, are stable, and converge to an equilibrium solution at an exponential rate.
\end{abstract}
\section{Introduction}
In this article we study the two-phase Navier-Stokes equations with surface tension coupled to the Mullins-Sekerka problem inside a bounded domain in two or three space dimensions. In our model, the interface separating the two fluids meets the boundary of the domain at a constant ninety degree angle. This leads to a free boundary problem for the interface involving a contact angle problem at the boundary as well.

We assume that the domain $\Omega \subset \R^n$, $n=2,3$, can be decomposed as $\Omega = \Omega^+ (t) \dot\cup \mathring \Gamma (t) \dot\cup \Omega^-(t)$, where $\mathring \Gamma (t)$ 
denotes the interior of $\Gamma(t)$, an $(n-1)$-dimensional submanifold with boundary. We interpret $\Gamma(t)$ to be the interface separating the two phases, $\Omega^+(t)$ and $\Omega^-(t)$, which will both be assumed to be connected. The boundary of $\Gamma(t)$ will be denoted by $\partial\Gamma(t)$. Furthermore we assume $\Gamma(t)$ to be orientable, the unit normal vector field on $\Gamma(t)$ pointing from $\Omega^- (t)$ into $\Omega^+(t)$ will be denoted by $\nu_{\Gamma(t)}$.

Let us introduce some notation. Let $V_{\Gamma(t)}$ denote the normal velocity and $H_{\Gamma(t)}$ the mean curvature of the free interface $\Gamma(t)$. By $\ljump \cdot \rjump$ we denote the jump of a quantity across $\Gamma(t)$ in direction of $\nu_{\Gamma(t)}$, that is,
\begin{equation*}
\ljump f \rjump (x) := \lim_{ \varepsilon \pfeil 0+} [ f(x + \varepsilon \nu_{\Gamma(t)} ) -  f(x - \varepsilon \nu_{\Gamma(t)} ) ], \quad x \in \Gamma(t).
\end{equation*}
Furthermore, $a \otimes b$ is defined by $[a \otimes b]_{ij} := a_i b_j$ for vectors $a,b \in \R^n$ and $A^\top$ denotes the transposed matrix of $A$.

We assume that $\Omega$ is filled by two immiscible, incompressible fluids with respective constant densities $\rho^\pm>0$ in the two phases.
Their respective constant viscosities are denoted by $\mu^\pm > 0$ and $\sigma > 0$ is a given surface tension constant.
To economize our notation, we let $\rho := \rho^+ \chi_{\Omega^+ (t)} + \rho^- \chi_{\Omega^-(t)}$ and $\mu := \mu^+ \chi_{\Omega^+ (t)} + \mu^- \chi_{\Omega^-(t)}$, where $\chi_M$ is the indicator function of a set $M$.
In our model, $u$ is the velocity of the fluids, $p$ the pressure, $\eta$ the chemical potential and $\Gamma(t)$ the free interface at time $t \geq 0$.

Let us consider the case where the domain is a cylindrical container $\Omega = \Sigma \times (L_1,L_2)$, where $ -\infty < L_1 < 0 < L_2 <\infty$ and $\Sigma \subset \R^2$ is bounded and has smooth boundary. By a standard localization method however we can also show well-posedness for smooth, bounded domains. In a forthcoming paper discussing effects of gravity and Rayleigh-Taylor instability, cf. \cite{rtipaper}, this simpler geometry is useful. We denote the lateral walls of the cylinder $\Omega$ by $S_1 := \partial \Sigma \times (L_1,L_2)$ and bottom and top by $S_2 := \Sigma \times \{ L_1, L_2 \}$. As usual, $\nu_{\partial\Omega}$ denotes the unit normal vector field pointing outwards of $\Omega$ and $\nu_{S_1} =\nu_{\partial\Omega}$ on the walls $S_1$. The projection to the tangent space of $S_1$ is defined by $P_{S_1} := I - \nu_{S_1} \otimes \nu_{S_1}$.

In a cylindrical domain the full problem for two possibly different, constant densities and viscosities reads as
\begin{equation} \label{9348576034786508dfgfd7g08697659765}
\begin{alignedat}{2}
\rho \partial_t  u  - \mu \Delta u + \div[ (\rho u + \ljump \rho \rjump \nabla \eta ) \otimes u ] + \nabla p  &= 0, &&\text{in } \Omega \backslash \Gamma(t), \\
\operatorname{div} u &= 0, &&\text{in } \Omega \backslash \Gamma(t), \\
- \ljump \mu (Du + Du^\top) \rjump \nu_{\Gamma (t)} + \ljump p \rjump \nu_{\Gamma (t)} &= \sigma H_{\Gamma(t)} \nu_{\Gamma(t)}, && \text{on } \Gamma(t), \\
\ljump u \rjump &= 0, && \text{on } \Gamma(t), \\
 V_{\Gamma(t)} -u|_{\Gamma(t)} \cdot \nu_{\Gamma(t)} &= - \ljump \nu_{\Gamma(t)} \cdot \nabla \eta \rjump, \quad && \text{on } \Gamma(t), \\
\nu_{\Gamma(t)} \cdot \nu_{S_1} &= 0, && \text {on } \partial \Gamma (t), \\
\Delta \eta &= 0, &&\text {in } \Omega \backslash \Gamma(t), \\
\eta|_{\Gamma(t)} &= \sigma H_{\Gamma(t)} , && \text {on } \Gamma(t), \\
\nu_{\partial\Omega} \cdot \nabla \eta|_{\partial\Omega} &= 0 , && \text{on } \partial\Omega \backslash\Gamma(t), \\
P_{S_1} \left( \mu (Du + Du^\top) \nu_{S_1} \right) &= 0, && \text{on } S_1 \backslash \partial\Gamma(t), \\
u \cdot \nu_{S_1} &= 0, && \text {on } S_1 \backslash \partial \Gamma(t), \\
u &= 0, && \text{on } S_2, \\
u(0) &= u_0, && \text{on } \Omega \backslash \Gamma(0), \\
\Gamma (0) &= \Gamma_0.
\end{alignedat}
\end{equation}
Here we want to mention that we implicitly impose that
$
\Gamma(t) \subset  \Omega$ and  $\partial\Gamma(t) \subset S_1$ for all $t \geq 0$,
that is, the interface stays inside the domain for positive times and the boundary of the interface is contained in the boundary of the domain as well. This only makes sense from a physical standpoint.

\begin{figure}[ht]
	\centering
  \includegraphics[width=0.33\textwidth]{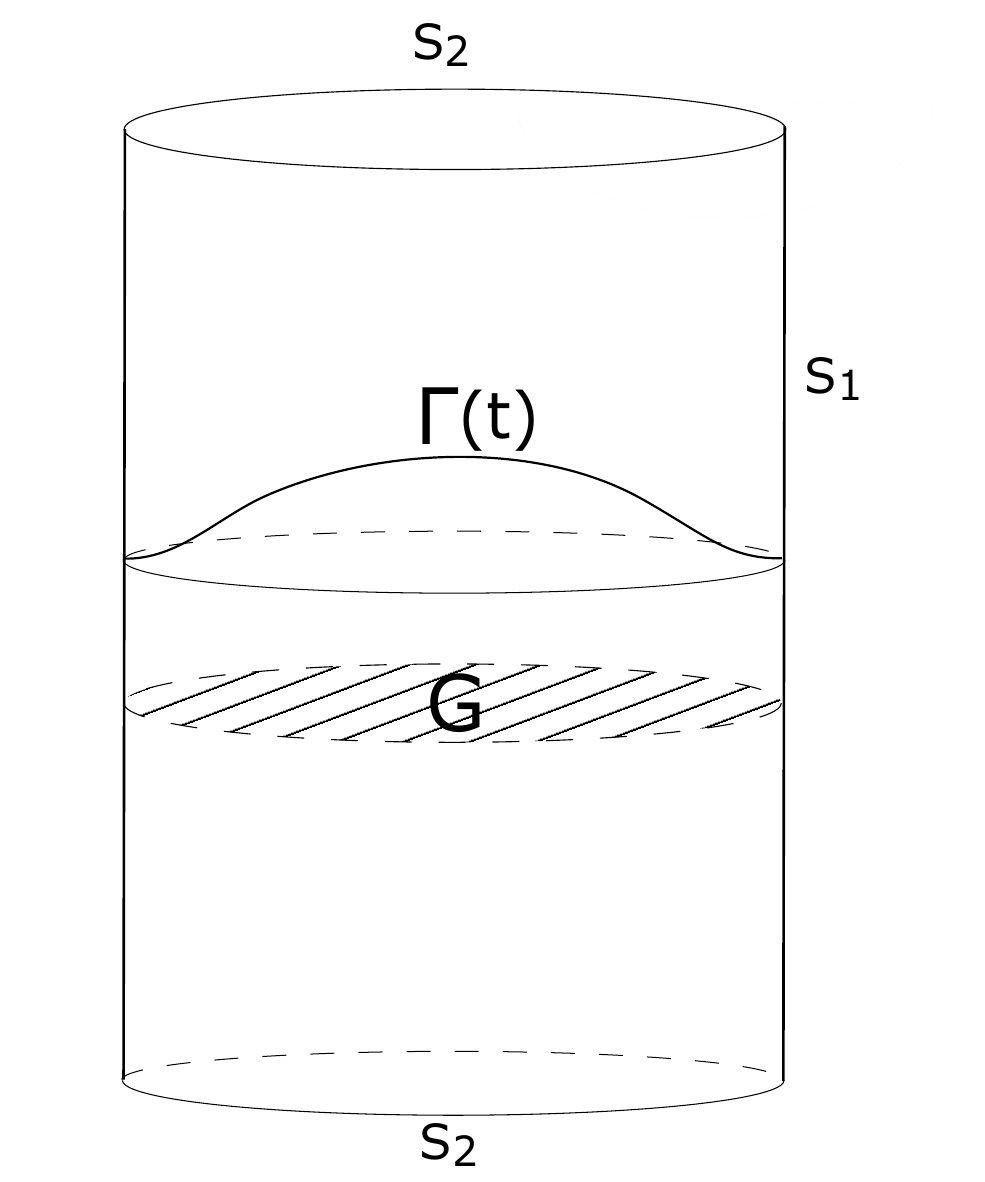}
	\caption{The cylindrical capillary $\Omega$ with lateral walls $S_1$ and bottom and top $S_2$. $G := \Sigma \times \{ 0 \}$ is the reference surface and $\Gamma(t)$ the time-dependent free interface.}
	\label{fig1}
\end{figure}

Note that in this model proposed by Abels, Garcke, and Gr\"un in \cite{aggthermo} the momentum balance $\eqref{9348576034786508dfgfd7g08697659765}_1$ contains an extra term involving the chemical potential $\eta$ since the densities in the two phases are different. This term however is needed to get an energy structure for the system, cf. Section 5 in \cite{aggthermo}. It is shown there that the energy 
\begin{equation}
E(t) := \int_{\Gamma(t)} \sigma d\mathcal H^{n-1} + \frac{1}{2} \int_\Omega \rho(t) u(t)^2 dx
\end{equation}
satisfies the energy-dissipation relation
\begin{equation}
\frac{d}{dt} E(t) = - D(t) := - \int_\Omega \mu  |  \mathbb D u(t)|^2 dx - \int_\Omega | \nabla \eta (t) |^2 dx. 
\end{equation}
Hereby, $\mathbb D u$ is the symmetric part of the gradient $Du$.
There is a remark in order regarding this extra term in $\eqref{9348576034786508dfgfd7g08697659765}_1$. Since $\div u = 0$ and $\Delta \eta = 0$ in the bulk phases $\Omega \backslash \Gamma(t)$, we obtain
\begin{equation*}
 \div[ (\rho u + (\rho^+ - \rho^-) \nabla \eta ) \otimes u ] = \rho (u \cdot \nabla )u + (\rho^+ - \rho^-) (\nabla \eta \cdot \nabla)u, \quad \text{in } \Omega \backslash\Gamma(t).
\end{equation*}
 In the case of equivalent densities, say for simplicity $\rho = 1$, the extra term $\div[  (\rho^+ - \rho^-) \nabla \mu  \otimes u ]$ vanishes and the system reduces to
\begin{equation} \label{9348rtgrt59765}
\begin{alignedat}{2} 
 \partial_t  u  - \mu^\pm \Delta u + (u \cdot \nabla ) u + \nabla p &= 0,& &\text{in } \Omega \backslash \Gamma(t), \\
\operatorname{div} u &= 0, &&\text{in } \Omega \backslash \Gamma(t), \\
- \ljump \mu^\pm (Du + Du^\top) \rjump \nu_{\Gamma (t)} + \ljump p \rjump \nu_{\Gamma (t)} &= \sigma H_{\Gamma(t)} \nu_{\Gamma(t)}, && \text{on } \Gamma(t), \\
\ljump u \rjump &= 0, && \text{on } \Gamma(t), \\
V_{\Gamma(t)} - u|_{\Gamma(t)} \cdot \nu_{\Gamma(t)} &=  - \ljump \nu_{\Gamma(t)} \cdot \nabla \eta \rjump, \quad && \text{on } \Gamma(t), \\
\nu_{\Gamma(t)} \cdot \nu_{S_1} &= 0, && \text {on } \partial \Gamma (t), \\
\Delta \eta &= 0, &&\text {in } \Omega \backslash \Gamma(t), \\
\eta|_{\Gamma(t)} &= \sigma H_{\Gamma(t)} , && \text {on } \Gamma(t), \\
\nu_{\partial\Omega} \cdot \nabla \eta|_{\partial\Omega} &= 0 , && \text{on } \partial\Omega \backslash\Gamma(t), \\
P_{S_1} \left( \mu^\pm (Du + Du^\top) \nu_{S_1} \right) &= 0, && \text{on } S_1 \backslash \partial\Gamma(t), \\
u \cdot \nu_{S_1} &= 0, && \text {on } S_1 \backslash \partial \Gamma(t), \\
u &= 0, && \text{on } S_2, \\
u(0) &= u_0, && \text{on } \Omega \backslash \Gamma(0), \\
\Gamma (0) &= \Gamma_0.
\end{alignedat}
\end{equation}
%Since it is not needed that $\rho^+ \not = \rho^-$, the results of this article also hold true for system \eqref{9348rtgrt59765}. We pose condition \eqref{38495743785} here as well. 
Note that in both cases, \eqref{9348576034786508dfgfd7g08697659765} and \eqref{9348rtgrt59765}, individual masses are conserved,
\begin{equation}
\frac{d}{dt} | \Omega^\pm (t) | =0, \quad t \in \R_+,
\end{equation}
since $\Delta \eta = 0$ and $\div u = 0$ in the bulk phases $\Omega \backslash \Gamma(t)$.

\textbf{Outline of this paper.}
In Section 2 we briefly introduce techniques and functions spaces we work with. In Section 3 we rewrite the free boundary problem of the moving interface as a nonlinear problem for the height function parametrizing the interface. Section 4 deals with an analysis of the underlying linear problem proving maximal regularity in an $L_p-L_q$ scale for the distance function and an $L_r$ scale for the velocity. Section 5 renders that the nonlinear problem is also well-posed, whereas Section 6 deals with qualitative behaviour, stability properties, and convergence to equilibrium solutions.
\section{Preliminaries and function spaces}
We now introduce function spaces and techniques we work with. For a more detailed discussion we refer the reader to the books of Triebel \cite{triebel} and Pr\"uss and Simonett \cite{pruessbuch}.

\subsection{Bessel-Potential, Besov and Triebel-Lizorkin Spaces.}

As usual, we will denote the classical $L_p$-Sobolev spaces on $\R^n$ by $W^k_p(\R^n)$, where $k$ is a natural number and $1 \leq p \leq \infty$. The Bessel-potential spaces will be denoted by $H^s_p(\R^n)$ for $s \in \R$ and the Sobolev-Slobodeckij spaces by $W^s_p(\R^n)$. We will also denote the usual Besov spaces by $B^s_{pr}(\R^n)$, where $s \in \R, 1 \leq p,r \leq \infty$. Lastly, the Triebel-Lizorkin spaces are denoted by $F^s_{pr}(\R^n)$.

These function spaces on a domain $\Omega \subset \R^n$ are defined in a usual way by restriction. The Banach space-valued versions of these spaces are denoted by $L_{p}(\Omega;X)$, $W^{k}_{p}(\Omega;X)$, $H^{s}_{p}(\Omega;X)$, $W^{s}_{p}(\Omega;X)$, $B^{s}_{pr}(\Omega;X)$, $F^{s}_{pr}(\Omega;X)$, respectively. For precise definitions we refer to \cite{meyriesveraarpointwise}.

For results on embeddings, traces, interpolation and extension operators we refer to \cite{abelsbuch}, \cite{danchinbuch}, \cite{lunardioptimal},\cite{lunardiinterpol}, \cite{pruessbuch}, \cite{runst}, \cite{triebel}.

%The following lemma is very well known and can easily be shown by using paraproduct estimates, see \cite{danchinbuch}.
%\begin{lemma} \label{danchinlemma}
%For any $s>0, 1 < p_1,r < \infty$,
%\begin{equation} \label{paraestimate}
%| vw |_{B^s_{p_1r}(\Rn)} \lesssim | v |_{B^s_{p_1r}(\Rn)} |w|_{L_\infty(\Rn)} + |v|_{L_\infty(\Rn)}| w |_{B^s_{p_1r} (\Rn)}
%\end{equation}
%for all $v,w \in B^s_{p_1r} (\Rn) \cap L_\infty(\Rn)$. In particular, the space $B^s_{p_1r} (\Rn) \cap L_\infty(\Rn)$ is an algebra.
%\end{lemma}
%\begin{proof}
%See Corollary 2.86 in \cite{danchinbuch}.
%\end{proof}
\subsection{Maximal Regularity.}
Let us recall the property of maximal $L_p$-regularity, as is e.g. done in Definition 3.5.1 in \cite{pruessbuch}.
\begin{definition}
Let $X$ be a Banach space, $ J = (0,T), 0 < T < \infty$ or $J = \mathbb R_+$, and $A$ a closed, densely defined operator on $X$ with domain $D(A) \subset X.$ Then the operator $A$ is said to have maximal $L_p$-regularity on $J$, if and only if for every $f \in L_p(J;X)$ there is a unique $u \in W^1_p(J;X) \cap L_p(J;D(A))$ solving 
\begin{equation*}
\frac{d}{dt}u (t) + A u (t)= f(t), \quad t \in J, \qquad u|_{t=0} = 0,
\end{equation*}
in an almost-everywhere sense in $L_p(J;X)$.
\end{definition}
There is a wide class of results on operators having maximal regularity, we refer to  \cite{amann}, \cite{amannlineartheory}, \cite{bothe2007lptheoryFA}, \cite{bourgain1984}, \cite{dore2000}, \cite{refpaperpruess}, \cite{prmaxreglpspace}, and \cite{pruessbuch}, for further discussion.

\section{Reduction to a flat interface}
In this section we transform the equations defined on the time-dependent domain $\Omega \backslash \Gamma(t)$ and the moving interface $\Gamma(t)$ to a fixed reference frame. We follow the ideas of \cite{wilkehabil}, see also \cite{hanzawa1981},\cite{prsimNSST}, \cite{mulsekpaper12}, \cite{mulsek2D}, \cite{stefanprobreg}, \cite{rayleighpruess}. To simplify notation let $n = 3$, the modifications for $n = 2$ are obvious.
% and, for sake of brevity, only consider the case of a cylindrical container. 
%However, using similar techniques and curvilinear coordinates in a neighbourhood of a suitable smooth reference surface as in the pure Mullins-Sekerka case \cite{mulsekpaper12} we can extend the strategy to the case of a general bounded and smooth domain. However, the reference surface can be and will be curved in this case so we will need to make a more precise analysis in the case of the bent interface problems. We will give more details later and for simplicity from now on assume that $\Omega$ is a cylindrical container.

 We now assume that the interface at time $t$ is given as a graph over the fixed reference surface $\Sigma := \Omega \cap \{ x_3 = 0 \}.$ More precisely, we assume that there is a height function $h : \Sigma \times \mathbb [0,\infty) \pfeil (L_1,L_2)$, such that
\begin{equation*}
\Gamma (t) = \Gamma_h (t) := \{ x \in \Sigma \times (L_1,L_2) : x_3 = h(x',t), \; x' = (x_1,x_2) \in \Sigma \}, \quad t \geq 0.
\end{equation*}
We will now construct a Hanzawa-type transformation, which is an isomorphism on $\Omega$ and maps the moving interface $\Gamma(t)$ to the reference surface $\Sigma$ for every $t \geq 0$. To this end pick some smooth bump function $\chi \in C^\infty_0(\R; [0,1])$ such that $\chi(s) = 1$ for $|s| \leq \delta/2$ and $\chi(s) = 0$ for $|s| \geq \delta$, where $0 < \delta \leq \min\{ -L_1,L_2 \}/3.$ Define a mapping
\begin{equation*}
\Theta_h : \Omega \times \R_+ \pfeil \Omega, \quad   \Theta_h (x,t) := x + \chi(x_3)h(x',t)e_3 =: x + \theta_h(x,t),
\end{equation*}
where $x = (x',x_3)$.
Then
\begin{equation*}
D \Theta_h =  \begin{pmatrix}
1 & 0 & 0 \\ 0 & 1 & 0 \\
\partial_1 h \chi & \partial_2 h \chi & 1 + h \chi'
\end{pmatrix}.
\end{equation*}
It clearly follows that $D\Theta_h$ is a regular matrix and $\Theta_h$ is invertible, provided $h \chi'$ is sufficiently small. For instance, this is the case whenever
\begin{equation*}
|h|_{L_\infty( (0,T) \times \Sigma )} \leq \frac{1}{2|\chi'|_{L_\infty(\R)}  }.
\end{equation*}
Note that $|\chi'|_\infty$ can be bounded by a constant depending on $\delta$ only.
%Then, given invertibility, one easily computes the inverse to the result
%\begin{equation}
%(D\Theta_h)^{-1} = \frac{1}{1+h\chi'}\begin{pmatrix}
%1+ h\chi'& 0 & 0 \\ 0 & 1+h\chi' & 0 \\
%-\partial_1 h \chi & -\partial_2 h \chi & 1 
%\end{pmatrix}.
%\end{equation}
For the sequel we will fix the bump function $\chi$ and choose $0 < d_0 < 1/(2|\chi'|_\infty)$ sufficiently small and assume that $|h|_{\infty} \leq d_0$. This way we ensure that the inverse $\Theta_h^{-1} : \Omega \pfeil \Omega$ is well defined and maps the free interface $\Gamma(t)$ to the fixed reference surface $\Sigma$.

We will now calculate how the equations behave under this transformation. Define the transformed quantities
\begin{equation*}
w (x,t) := u(\Theta_h(x,t), t) , \quad q (x,t) := p(\Theta_h(x,t), t), \quad  \vartheta(x,t) := \eta( \Theta_h(x,t), t),  
\end{equation*}
for $x \in \Omega,$ $ t \in \mathbb R_+$.
%At this point we note that $w,q$ and $\vartheta$ are defined on the fixed time-independent reference frame. 
We now determine the equations which $(w,q,\vartheta)$ solve. Define $
D\Theta_h^{-\top} := ((D\Theta_h)^{-1} )^\top 
$, as well as the transformed quantities
\begin{equation*}
\nabla_h  := D\Theta_h^{-\top} \nabla, \quad \nabla_h u := (\nabla_h u_k^\top)_{k=1}^3, \quad \div_h := \operatorname{Tr}(\nabla_h), \quad \Delta_h := \div_h \nabla_h.
\end{equation*}
With this it is straightforward to check that
\begin{align*}
\nabla u ( \Theta_h(x,t) , t) = \nabla_h w (x,t), \quad [(u \cdot \nabla) u ](\Theta_h (x,t),t) =[ (w \cdot \nabla_h) w] (x,t),\\ \Delta u (\Theta (x,t),t) = \Delta_h w (x,t), \quad \div u(\Theta(x,t),t) = \div_h w(x,t), \quad x \in \Omega, t \in \R_+.
\end{align*}
Furthermore,
\begin{equation*}
\partial_t u ( \Theta_h (x,t), t ) = \partial_t w (x,t) + Dw(x,t) \partial_t \Theta_h^{-1} ( \Theta_h (x,t) , t), \quad x \in \Omega, t \in \R_+.
\end{equation*}
The upper unit normal at the free interface $\Gamma(t)$ and the normal velocity of which can both be expressed in terms of $h$ by
\begin{equation*} \label{8769856}
\nu_{\Gamma(t)} = \frac{(- \nabla h,1)^\top}{\sqrt{ 1+| \nabla h|^2  }}, \quad V_{\Gamma(t)} = \frac{\partial_t h	 }{  \sqrt{ 1 + | \nabla h |^2 } }, \quad x \in \Sigma, t \in \R_+.
\end{equation*}
We are now able to transform the Two-phase Navier-Stokes/Mullins-Sekerka system \eqref{9348576034786508dfgfd7g08697659765} to the fixed reference frame, the transformed system reads as
\begin{equation} \label{930487547650487gg5}
\begin{alignedat}{2} 
\rho^\pm \partial_t  w  - \mu^\pm \Delta w  + \nabla q &= a^\pm(h; D_x, D_x^2)(w,q) + \bar a(h,w) , \quad &&\text{in } \Omega \backslash \Sigma, \\
\operatorname{div} w &= G_d(h,w), &&\text{in } \Omega \backslash \Sigma, \\
- \ljump \mu^\pm (Dw + Dw^\top) -qI \rjump \nu_{\Sigma }   &=  \sigma \Delta_{x'} h \nu_\Sigma + G_S(h,w,q), && \text{on } \Sigma,  \\
\ljump w \rjump &= 0, && \text{on } \Sigma, \\
\partial_t h &= w \cdot \nu_\Sigma - \ljump \partial_3 \vartheta \rjump + G_\Sigma(h,w,\vartheta), && \text{on } \Sigma, \\
(-\nabla_{x'} h, 1)^\top \cdot \nu_{S_1} &= 0, && \text {on } \partial \Sigma, \\
\Delta \vartheta &= G_c(h,\vartheta), &&\text {in } \Omega \backslash \Sigma, \\
\vartheta|_{\Sigma} - \sigma \Delta_{x'} h &= G_\kappa (h), && \text {on } \Sigma, \\
\nu_{\partial\Omega} \cdot \nabla \vartheta|_{\partial\Omega} &= G_N(h,\vartheta), && \text{on } \partial\Omega \backslash\Sigma, \\
P_{S_1} \left( \mu^\pm (Dw + Dw^\top) \nu_{S_1} \right) &= G_P^\pm (h,w), && \text{on } S_1 \backslash \partial\Sigma, \\
w \cdot \nu_{S_1} &= 0, && \text {on } S_1 \backslash \partial \Sigma, \\
w &= 0, && \text{on } S_2, \\
w(0) &= w_0, && \text{on } \Omega \backslash \Sigma, \\
h (0) &= h_0, && \text{on } \Sigma, 
\end{alignedat}
\end{equation}
where $\nu_\Sigma = e_3$, and 
\begin{equation*}
\begin{alignedat}{1} 
a^\pm(h;D_x,D_x^2)(w,q) &:= \mu^\pm (\Delta_h - \Delta)w + (\nabla - \nabla_h)q, \\
\bar a(h,w,\vartheta) &:= Dw \cdot \partial_t \Theta_h^{-1} - (w \cdot \nabla_h )w  - (\rho^+ - \rho^-)(\nabla_h \vartheta \cdot \nabla_h ) w , \\
G_d(h,w) &:= (\div - \div_h)w, \\
G_S (h,w,q) &:=   \ljump \mu^\pm \left( (D\Theta_h - I)Dw + Dw^\top (D\Theta_h - I)^\top ) \right) \rjump \nu_{\Gamma_h} +    \\
&+ \ljump \left( \mu^\pm (Dw+Dw^\top) - q I \right) (e_3 - \nu_{\Gamma_h} ) \rjump + \sigma( K(h) \nu_{\Gamma_h} - \Delta_{x'} h e_3 ), \\
G_\Sigma (h,w, \vartheta) &:=  w \cdot (-\nabla_{x'} h,0)^\top - \ljump e_3\cdot (\nabla - \nabla_h) \vartheta \rjump - \ljump (-\nabla_{x'} h,0)^\top \cdot \nabla_h \vartheta \rjump, \\
G_c (h,\vartheta) &:= (\Delta-\Delta_h)\vartheta, \\
G_\kappa (h) &:= \sigma(K(h)-\Delta_{x'} h), \\
G_N(h,\vartheta) &:= \nu_{\partial\Omega} \cdot (\nabla-\nabla_h)\vartheta , \\
G_P^\pm(h,w) &:= P_{S_1} \left( \mu^\pm \left( (D\Theta_h - I)Dw + Dw^\top (D\Theta_h - I)^\top ) \right) \nu_{S_1} \right). 
\end{alignedat}
\end{equation*}
Here, cf. \cite{eschersimonett}, the mean curvature is given in terms of $h$ by
\begin{equation*}
K(h)= H(\Gamma_h) = \div \left( \frac{\nabla h}{\sqrt { 1+ |\nabla h|^2 }} \right), \quad x \in \Sigma, t \in \R_+.
\end{equation*}

%Let us briefly explain how we transformed the equations. For instance, pick the evolution equation for the free interface, $V_{\Sigma(t)} = u|_{\Gamma(t)} \cdot \nu_{\Gamma(t)} - \ljump \nu_{\Gamma(t)} \cdot \nabla \mu \rjump$ on $\Gamma(t)$. By employing \eqref{928369872634}, \eqref{8769856} and \eqref{8769856b}, this reads as
%\begin{equation}
%\frac{\partial_t h}{\sqrt{ 1+|\nabla_{x'} h|^2 }} = w \cdot \frac{(-\nabla_{x'} h,1)^\top}{\sqrt{ 1+|\nabla_{x'} h|^2 }} - \frac{ \ljump (-\nabla_{x'} h,1)^\top \cdot \nabla_h \eta \rjump  }{ \sqrt{ 1+|\nabla_{x'} h|^2 } }, \quad x' \in \Sigma, t \in \R_+,
%\end{equation}
%whence we obtain
%\begin{equation}
%\partial_t h = w \cdot e_3 - \ljump \partial_3 \eta \rjump + w \cdot (-\nabla_{x'} h,0)^\top - \ljump e_3\cdot (\nabla - \nabla_h) \eta \rjump - \ljump (-\nabla_{x'} h,0)^\top \cdot \nabla_h \eta \rjump, \quad \text{on } \Sigma, t \in \R_+.
%\end{equation}
Furthermore, we want to point out that we used the fact that the normal $\nu_{S_1}$ is independent of $x_3$ and that the transformation $\Theta_h$ leaves the Dirichlet-boundary $S_2$ invariant.

Since $\nu_\Sigma = e_3$, one can also easily decompose the stress tensor condition $\eqref{930487547650487gg5}_3$ into tangential and horizontal parts, cf. \cite{wilkehabil}. Then, $\eqref{930487547650487gg5}_3$ reads as
\begin{align*}
- \ljump \mu^\pm \partial_3 (w_1,w_2) \rjump - \ljump \mu^\pm \nabla_{x'} w_3 \rjump &= (G_S(h,w,q))_{1,2}, & \text{on } \Sigma, \\
-2 \ljump \mu^\pm \partial_3 w_3 \rjump + \ljump q \rjump - \sigma \Delta_{x'} h &= (G_S(h,w,q))_{3}, & \text{on } \Sigma. 
\end{align*}
To economize notation, we define $$G^\parallel_S (h,w,q) := (G_S(h,w,q))_{1,2}, \quad G^\perp_S (h,w,q) := (G_S(h,w,q))_3 .$$ Hereby we understand $a_{1,2}$ to be $(a_1,a_2)$ for a given vector $a = (a_1,a_2,a_3) \in \mathbb R^3$.
\section{Maximal regularity of the linear problem}
The main goal of this section is to derive a maximal regularity result for the linearization of \eqref{9348576034786508dfgfd7g08697659765}.
\subsection{Linearization, regularity and compatibility conditions}
In this section we consider the linear part of the Two-phase Navier-Stokes/Mullins-Sekerka system, which reads as
\begin{equation} \label{345345345345hh345h345hh345}
\begin{alignedat} {2} \rho^\pm \partial_t  u  - \mu^\pm \Delta u  + \nabla \pi &= g_1 , &&\text{in } \Omega \backslash \Sigma, \\
\operatorname{div} u &= g_2, &&\text{in } \Omega \backslash \Sigma, \\
- \ljump \mu^\pm \partial_3 (u_1,u_2) \rjump - \ljump \mu^\pm \nabla_{x'} u_3 \rjump &= g_3, && \text{on } \Sigma,   \\
-2 \ljump \mu^\pm \partial_3 u_3 \rjump + \ljump \pi \rjump - \sigma \Delta_{x'} h&= g_4, && \text{on } \Sigma, \\
\ljump u \rjump &= g_5, && \text{on } \Sigma, \\
\partial_t h - (u_3^+ + u_3^-)/2+\ljump \partial_3 \eta \rjump &=  g_6, && \text{on } \Sigma, \\
(-\nabla_{x'} h, 0)^\top \cdot \nu_{S_1} &= g_7, && \text {on } \partial \Sigma, \\
\Delta \eta &= g_8, &&\text {in } \Omega \backslash \Sigma, \\
\eta|_{\Sigma} - \sigma \Delta_{x'} h &= g_9, && \text {on } \Sigma, \\
\nu_{\partial\Omega} \cdot \nabla \eta|_{\partial\Omega} &= g_{10}, && \text{on } \partial\Omega \backslash\Sigma, \\
P_{S_1} \left( \mu^\pm (Du + Du^\top) \nu_{S_1} \right) &=P_{S_1}g_{11}, \qquad\qquad\qquad & &\text{on } S_1 \backslash \partial\Sigma, \\
u \cdot \nu_{S_1} &= g_{12}, && \text {on } S_1 \backslash \partial \Sigma, \\
u &= g_{13}, && \text{on } S_2, \\
u(0) &= u_0, && \text{on } \Omega \backslash \Sigma, \\
h (0) &= h_0, && \text{on } \Sigma. 
\end{alignedat}
\end{equation}
Here we take $(u_3^+ + u_3^-)/2$ instead of the trace of $u$ in equation $\eqref{345345345345hh345h345hh345}_6$ since $u$ is allowed to have a jump across $\Sigma.$ Hereby $u_3^\pm$ denote the directional traces of $u_3$ with respect to $\{ x_3 \gtrless 0 \}$.
\subsection{Regularity of the solution}
The question of function spaces is now a very delicate matter. The main idea already used by Abels and Wilke in the case of no boundary contact \cite{abelswilke} is to treat the Navier-Stokes part of the evolution as lower order compared to the Mullins-Sekerka part. They consider some height function $h$ as given and solve the two-phase Navier-Stokes equations in dependence of $h$ by a function $u = u(h)$. Afterwards plugging in the solution $u(h)$ in the evolution equation for $h$ they obtain a problem only dependent on $h$. If now $u$ is sufficiently more regular as the other terms in the evolution equation for $\partial_t h$, the Navier-Stokes equations can be seen as a lower order perturbation. By choosing the time interval sufficiently small one gets well-posedness also for the coupled system, stemming from the unique solvability of the pure Mullins-Sekerka evolution of $h$.

Let us begin by recalling the maximal regularity class for $h$ of the pure Mullins-Sekerka system with boundary contact, cf. \cite{mulsekpaper12}. For $6 < p < \infty$ and $q \in (5/3,2) \cap (2p/(p+1), 2)$, we obtained a unique local in time strong solution
\begin{equation*}
h \in W^1_p(0,T;W^{1-1/q}_q(\Sigma)) \cap L_p(0,T;W^{4-1/q}_q(\Sigma)),
\end{equation*}
and $\eta \in L_p(0,T;W^2_q(\Omega \backslash \Sigma))$ of the linearized Mullins-Sekerka with boundary contact for some $T > 0$, cf. Theorem 5.1 in \cite{mulsekpaper12}.

Two things are important for the analysis: to be later able to treat the Navier-Stokes part as lower order, we need to know that $u|_\Sigma$ has better time regularity and at least as much space regularity as the other terms in $\eqref{345345345345hh345h345hh345}_6$, namely $L_p(0,T;W^{1-1/q}_q(\Sigma))$. On the other hand, the linearized curvature term $\Delta_{x'} h$ has to be at least of the same regularity as $Du|_\Sigma$, cf. $\eqref{345345345345hh345h345hh345}_4$. By choosing a setting where $u$ is too regular, $\Delta_{x' }h$ fails to be admissible data, and by choosing $u$ not regular enough, $u|_\Sigma$ may not be treated as a lower order perturbation. In the following lines we want to explain a setting of function spaces, in which the coupling is of lower order and $u$ is still regular enough to control the nonlinear terms.

The first possibility is to choose an $L_p-L_p$ ansatz, where $p$ as above is large. The vector field $u$ would then be very regular, hence making the nonlinearities easy to handle since in particular $p > 5$. In this ansatz we search for
\begin{equation*}
u \in W^1_p(0,T;L_p(\Omega)) \cap L_p(0,T;W^2_p(\Omega \backslash \Sigma)),
\end{equation*}
whence by classical theory, $u \in BUC([0,T]; W^{2-2/p}_{p} (\Omega \backslash \Sigma))$. Taking traces yields $u|_\Sigma \in BUC([0,T]; W^{2-3/p}_{p} (\Sigma ))$, and hence it can be seen as a lower order perturbation in $L_p(0,T;W^{1-1/q}_q(\Sigma))$. However,
\begin{equation} \label{283764368}
\Delta_{x'} h \in W^{2/3 - 1/(3q)}_p (0,T;L_q(\Sigma)) \cap L_p(0,T;W^{2-1/q}_q(\Sigma)),
\end{equation}
on the other hand,
\begin{equation*}
Du|_\Sigma \in W^{1/2-1/(2p)}_p(0,T;L_p(\Sigma)) \cap L_p(0;T;W^{1-1/p}_p(\Sigma)).
\end{equation*}
It is now a consequence of Sobolev-type embedding theorems to see that $W^{2-1/q}_q(\Sigma)$ does not embed into $W^{1-1/p}_p(\Sigma)$ in general, due to $5/3 < q < 2$ and $p > 6$. Hence this $L_p-L_p$ ansatz with large $p$ does not work.

Alternatively, one can make an $L_q-L_q$ ansatz, searching for some 
\begin{equation*}
u \in W^1_q(0,T;L_q(\Omega)) \cap L_q(0,T;W^2_q(\Omega \backslash \Sigma)),
\end{equation*}
 where $5/3 < q < 2$. Clearly, the function $u$ possesses way less regularity in this ansatz. It is then easy to check that $\Delta_{x'} h$ is admissible data by comparing the regularity classes of $\Delta_{x'} h$ and $Du|_\Sigma$. 
Also, 
\begin{equation*}
u|_\Sigma \in L_{2q/(2-q)}(0,T;W^{1-1/q}_q(\Sigma)).
\end{equation*}
Note that as $q \pfeil 2$, the time regularity index $2q/(2-q)$ tends to $+\infty$. Hence the Stokes part may be treated as lower order whenever $q <2$ is close to $2$.
However we want to point out that handling the nonlinearities may be more difficult since certain Sobolev embeddings fail since $q < 2$.

By choosing an $L_p-L_q$ approach one may get better regularity for $u$, however if one takes any trace of $u$ on the boundary, for instance in the simplest case of the Dirichlet conditions on top and bottom of the container, one ends up with Triebel-Lizorkin spaces in time. It is well known that the optimal regularity for the trace of a function
\begin{equation*}
u \in W^1_p(0,T;L_q(\Omega)) \cap L_p(0,T;W^2_q(\Omega \backslash \Sigma))
\end{equation*}
on the boundary, e.g. $S_2$, is
\begin{equation*}
u|_{S_2} \in F^{1-1/(2q)}_{pq}(0,T; L_q(S_2)) \cap L_p(0,T;W^{2-1/q}_q (S_2)).
\end{equation*}
It is particularly hard to treat this problem in a mixed $L_p-L_q$ setting, since even in the model problems it is not clear how to generalize for instance the results of Pr\"uss and Simonett in \cite{prsimNSST} regarding the Dirichlet-to-Neumann operator. This operator is well understood in an $L_p-L_p$ setting, however the proof of Proposition 3.3 given in \cite{prsimNSST} is not easily generalizable to a mixed setting where $p \not = q$. The proof heavily relies on real interpolation method and Triebel-Lizorkin spaces do not naturally arise as real interpolation spaces.

The explanations above motivate our introduction of a third integration scale. We will show that for given $q <2$ sufficiently close to $2$ and $6 < p < \infty$ finite but large, there is some exponent $ 3 < r = r(q) < 7/2  < \infty$ 
such that the following is true:
$\Delta_{x'} h$ is admissible data in the Stokes part, and $u|_\Sigma$ is lower order in the evolution equation for $h$.
This $L_r-L_r$ approach with $r>3$ circumvents the problem of Triebel-Lizorkin data spaces in the Stokes part completely and hence makes the problem a lot easier to tackle. Also it allows to make use of known results of Pr\"uss and Simonett in \cite{prsimNSST} and makes the nonlinearities easier to handle in the contraction estimates. We will give the precise choice of $r$ below in Theorem \ref{thmr} and prove the above assertions rigorously. 
%However, we want to point out that by more refined estimates, one may be able to consider initial vector fields $u_0$ with lower regularity. The first starting point of this analysis is Remark \ref{9348750w84736504384}.

%Recall that we now have for $u \in W^1_r(0,T;L_r(\Omega)) \cap L_r(0,T;W^2_r(\Omega \backslash \Sigma))$ that
%\begin{equation}
%Du|_\Sigma \in W^{1/2-1/(2r)}_r(0,T;L_r(\Sigma)) \cap L_r(0;T;W^{1-1/r}_r(\Sigma)),
%\end{equation}
%and $\Delta_{x'} h$ has regularity \eqref{283764368}.
\begin{theorem} \label{thmr} Let $n =3$, that is, $\dim \Sigma = n-1 = 2.$
Let $5/3 < q < 2 $ and $6 < p < \infty$. Furthermore, let $0 < T \leq T_0$ for some fixed $T_0 < \infty.$ 
Let
\begin{equation*}
2 \leq r < \frac{7}{6/q -1}.
\end{equation*}
Then, for any $h \in W^1_p(0,T;W^{1-1/q}_q(\Sigma)) \cap L_p(0,T;W^{4-1/q}_q(\Sigma))$, we have that
\begin{equation*} 
 \Delta_{x'} h \in W^{1/2-1/(2r)}_r(0,T;L_r(\Sigma)) \cap L_r(0;T;W^{1-1/r}_r(\Sigma)).
\end{equation*}
Furthermore, there is some $C = C(T) > 0$, such that
\begin{equation} \label{6087698}
\begin{alignedat}{1}
| \Delta_{x'} h &|_{ W^{1/2-1/(2r)}_r(0,T;L_r(\Sigma)) \cap L_r(0;T;W^{1-1/r}_r(\Sigma)) }  \\ &\leq C(T) |h|_{W^1_p(0,T;W^{1-1/q}_q(\Sigma)) \cap L_p(0,T;W^{4-1/q}_q(\Sigma))}.
\end{alignedat}
\end{equation}
 Furthermore, if $2 > q > 9/5$, we can choose $r$ to satisfy $3 < r < 7/2$. 
 If $3 < r < 7/2$, we have
\begin{equation} \label{4354ffff35}
\begin{alignedat}{1}
W^1_r(0,T&;L_r(\Omega)) \cap L_r(0,T;W^2_r(\Omega \backslash \Sigma)) \into \\
&\into  L_\infty(0,T;L_\infty(\Omega)) \cap L_\infty (0,T; W^1_r(\Omega \backslash \Sigma)) \cap L_r(0,T; W^1_\infty(\Omega \backslash \Sigma)).
\end{alignedat}
\end{equation}
%as well as
%$
%Du \in L_\infty(0,T;L_r(\Omega)) \cap L_r(0,T;L_\infty(\Omega)).
%$
%In particular,
Moreover,
\begin{equation}
\operatorname{tr}_\Sigma :W^1_r(0,T;L_r(\Omega)) \cap L_r(0,T;W^2_r(\Omega \backslash \Sigma)) \pfeil L_\infty(0,T;W^{1-1/q}_q(\Sigma)),
\end{equation}
is bounded provided the trace on $\Sigma$ is well defined, for instance if $\ljump u \rjump = 0$. Otherwise the statement is true for the restrictions on $\Omega^\pm$, that is, $\operatorname{tr}_\Sigma^\pm : u \mapsto u^\pm|_\Sigma$, where $u^\pm := u|_{\Omega^\pm}$.

By restricting to height functions $h$ with initial trace zero, $h(0) = 0$, the embedding constant in \eqref{6087698} can be chosen to be independent of $T$ and only depending on $T_0$. In particular, the embedding does not degenerate and the embedding constant stays bounded as $T \downarrow 0$.

Restricting to vanishing traces at $t= 0$ in \eqref{4354ffff35}, the embedding constant is also independent of $T > 0$.
\end{theorem}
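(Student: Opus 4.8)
\emph{Strategy.} Everything reduces to one-dimensional (in time) and two- or three-dimensional (in space) Sobolev-type embeddings applied to the anisotropic space-time spaces, the only non-elementary ingredient being the mixed derivative theorem. Write $J := (0,T)$, and note that $p > r$ throughout, since $p > 6$ and $r < 7/2$. The spatial half of \eqref{6087698}, namely $\Delta_{x'} h \in L_r(J;W^{1-1/r}_r(\Sigma))$, is the easy part: from $h \in L_p(J;W^{4-1/q}_q(\Sigma))$ one has $L_p(J) \into L_r(J)$ on the bounded interval, and for $\dim\Sigma = 2$ the Sobolev embedding $W^{4-1/q}_q(\Sigma) \into W^{3-1/r}_r(\Sigma)$ holds exactly for $r \le 3q/(3-q)$, which is implied by $r < 7/(6/q-1)$ since $7q/(6-q) \le 3q/(3-q)$ for $q \ge 3/4$; subtracting two spatial derivatives then gives the claim and the corresponding part of the estimate.

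\emph{The time-regular half and the bound on $r$.} Here the endpoint spaces alone do not suffice --- as the discussion of the failing $L_p$--$L_p$ ansatz in the text already indicates --- so one interpolates first. By the mixed derivative theorem applied to $h$, for every $\theta \in (0,1)$,
\begin{equation*}
h \in H^{1-\theta}_p\bigl(J;[W^{1-1/q}_q(\Sigma),W^{4-1/q}_q(\Sigma)]_\theta\bigr) = H^{1-\theta}_p\bigl(J;W^{1-1/q+3\theta}_q(\Sigma)\bigr),
\end{equation*}
(choosing $\theta$ so that the spatial index is non-integer, which is possible in the range of interest), hence $\Delta_{x'} h \in H^{1-\theta}_p(J;W^{-1-1/q+3\theta}_q(\Sigma))$. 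I would then chain: the spatial Sobolev embedding $W^{-1-1/q+3\theta}_q(\Sigma) \into L_r(\Sigma)$, valid when $3\theta \ge 1 + 3/q - 2/r$ (which forces the order on the left to be nonnegative), followed by $H^{1-\theta}_p(J) \into W^{1-\theta}_p(J) \into W^{1-\theta}_r(J) \into W^{1/2-1/(2r)}_r(J)$, where the first embedding uses $p \ge 2$, the second $p \ge r$ on the bounded interval, and the third $1-\theta \ge 1/2 - 1/(2r)$. A value $\theta$ meeting both $\tfrac13(1+3/q-2/r) \le \theta$ and $\theta \le \tfrac12 + \tfrac1{2r}$ exists precisely when $6/q - 1 \le 7/r$, i.e. $r \le 7/(6/q-1)$; the strict bound $r < 7/(6/q-1)$ allows $\theta$ with both inequalities strict, so all embeddings are non-critical. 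Tracking norms through these continuous embeddings yields the remaining part of \eqref{6087698}.

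\emph{Choice of $r$, the embedding \eqref{4354ffff35}, and the trace.} For $q > 9/5$ one has $7/(6/q-1) > 3$, and $7/(6/q-1) < 7/2$ for every $q < 2$, so $(3,7/(6/q-1))$ is a nonempty subinterval of $(3,7/2)$. For \eqref{4354ffff35} with $\dim\Omega = 3$ and $3 < r < 7/2$, I would use the standard trace embedding
\begin{equation*}
W^1_r(J;L_r(\Omega)) \cap L_r(J;W^2_r(\Omega\setminus\Sigma)) \into BUC\bigl([0,T];B^{2-2/r}_{rr}(\Omega\setminus\Sigma)\bigr)
\end{equation*}
together with $B^{2-2/r}_{rr}(\Omega\setminus\Sigma) \into W^1_r(\Omega\setminus\Sigma)$ (as $2-2/r \ge 1$ since $r \ge 2$), $B^{2-2/r}_{rr}(\Omega^\pm) \into C(\overline{\Omega^\pm}) \into L_\infty$ (as $2-2/r > 3/r$ since $r > 5/2$), and $W^2_r(\Omega^\pm) \into W^1_\infty(\Omega^\pm)$ (as $2-3/r > 1$, i.e. precisely $r > 3$). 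Taking the trace on $\Sigma$ of the resulting $L_\infty(J;B^{2-2/r}_{rr}(\Omega\setminus\Sigma))$-bound --- using the one-sided traces $\operatorname{tr}_\Sigma^\pm$ when $\ljump u \rjump \neq 0$ --- gives $\operatorname{tr}_\Sigma u \in L_\infty(J;W^{2-3/r}_r(\Sigma))$, and since $q \le r$ the spatial embedding $W^{2-3/r}_r(\Sigma) \into W^{1-1/q}_q(\Sigma)$ holds (the index condition $1+3/q \ge 5/r$ being comfortably satisfied), whence the asserted boundedness of $\operatorname{tr}_\Sigma$.

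\emph{$T$-independence, and the main obstacle.} For $h$ with $h(0) = 0$ (respectively, for functions with vanishing trace at $t=0$ in \eqref{4354ffff35}), I would first extend in time by a standard operator that preserves the vanishing initial trace and has norm bounded by a constant depending only on $T_0$, not on $T \in (0,T_0]$ --- e.g. a higher-order reflection, or the retraction onto the subspace of functions with zero trace at $0$ over $(0,\infty)$ --- then apply all of the above embeddings on the \emph{fixed} interval $(0,2T_0)$, where their constants depend only on $T_0$, and finally restrict back to $J$ with a restriction operator of norm $\le 1$. Here the relevant time-regularity indices ($1-\theta \ge 1/2-1/(2r)$ and $2-2/r$) exceed $1/p$, respectively $1/r$, so the $t=0$ traces are well defined and vanish, which legitimises the reflection; and the only genuinely $T$-dependent step above, $L_p(J) \into L_r(J)$, has constant $T^{1/r-1/p}$, which stays bounded as $T \downarrow 0$. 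The delicate point of the whole argument is the \emph{sharp} relation $r < 7/(6/q-1)$: it emerges only after passing to the interpolated space $H^{1-\theta}_p(J;W^{-1-1/q+3\theta}_q(\Sigma))$ with an optimally chosen $\theta$ and then spending the available spatial smoothness simultaneously on the integrability gain $q \to r$ and on the two-dimensional deficit --- verifying that the two constraints on $\theta$ are jointly solvable, and keeping straight which inequalities must be strict, is the crux, while the $T$-uniform extension/restriction bookkeeping is the main secondary nuisance.
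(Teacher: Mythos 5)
Your proposal is correct and follows essentially the same route as the paper: the spatial half via $L_p\into L_r$ in time plus the Sobolev embedding with the condition $r<3q/(3-q)$, the time-regular half via the mixed derivative theorem followed by trading spatial smoothness for integrability and time regularity (your two constraints on $\theta$ being jointly solvable is exactly the paper's explicit choice $\theta=2/3-1/q+2/(3r)-\epsilon$ together with the equivalence to $r<7/(6/q-1)$), and the velocity embeddings and trace statement via $BUC([0,T];W^{2-2/r}_r(\Omega\backslash\Sigma))$ with the same exponent checks, plus the standard extension argument for $T$-uniformity under vanishing initial traces. The only differences (interpolating with $1-\theta$ instead of $\theta$, and taking the trace to $\Sigma$ before the spatial embedding rather than after) are cosmetic.
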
  \label{eiruherutert}
\begin{proof}
%[Proof of Theorem \ref{eiruherutert}.]
Let $2 \leq r \leq p$ and $0 < T \leq T_0$. 
Note that due to $p \geq r$, we have that $L_p(0,T) \into L_r(0,T)$. The embedding constant here only depends on $T_0$, which stems from H\"older's inequality,
\begin{equation*}
|f|_{L_r(0,T)} \leq T^{(p-r)/(pr)} |f|_{L_p(0,T)}\leq T_0^{(p-r)/(pr)} |f|_{L_p(0,T)}, \quad f \in L_p(0,T).
\end{equation*}
Now, due to Sobolev's embedding theorem, $W^{2-1/q}_q(\Sigma) \into W^{1-1/r}_r(\Sigma)$, provided that $2-3/q > 1-3/r$, which gives an upper restriction on $r$ reading as
\begin{equation} \label{239786423746}
r < \frac{3q}{3-q},
\end{equation}
cf. \cite{abelsbuch}, \cite{triebel}.
Summing up, $L_p(0,T;W^{2-1/q}_q(\Sigma))\into L_r(0,T;W^{1-1/r}_r(\Sigma))$, provided $r \leq p$ and \eqref{239786423746} holds.

Since we want to use the results of \cite{kaipdiss} on the half line, we now consider some
\begin{equation*}
h \in { W}^1_p (\R_+ ; W^{1-1/q}_q(\Sigma)) \cap L_p(\R_+ ; W^{4-1/q}_q(\Sigma)).
\end{equation*}
 Firstly, using Proposition 5.37 in \cite{kaipdiss} on the half line,
 $
 h \in H^\theta_p ( \R_+ ; W^{1-1/q + 3(1-\theta)}_q(\Sigma))
$,
whenever $\theta \in (0,1)$. It follows that
\begin{equation*}
 \Delta_{x'} h \in H^\theta_p ( \R_+ ; W^{2-1/q-3\theta}_q(\Sigma)), \quad \theta \in (0,1).
\end{equation*}
Let $\epsilon > 0 $ small.
By choosing $\theta := 2/3 - 1/q + 2/(3r) - \epsilon \in (0,1)$, we obtain
\begin{equation*}
 \Delta_{x'} h \in H^{  2/3 - 1/q + 2/(3r) - \epsilon }_p ( \R_+ ;  W^{2/q - 2/r + 3\epsilon}_q (\Sigma)).
\end{equation*}
%This now yields the following fact: given a function $h \in W^1_p(0,T; W^{1-1/q}_q(\Sigma)) \cap L_p(0,T;W^{4-1/q}_q(\Sigma))$ we obtain
%\begin{equation}
% \Delta_{x'} h \in H^{  2/3 - 1/q + 2/(3r) }_p ( 0,T ;  H^{2/q - 2/r}_q (\Sigma)),
%\end{equation}
%by standard arguments. 
By Sobolev embeddings for Besov spaces,
\begin{equation*}
 \Delta_{x'} h \in H^{  2/3 - 1/q + 2/(3r) - \epsilon }_p ( 0,T ;  L_r (\Sigma)),
\end{equation*}
for any small $\epsilon > 0$.
Assume for a moment that 
\begin{equation} \label{3948750397864503746}
2/3 - 1/q + 2/(3r) >  1/2 - 1/(2r).
\end{equation}
Then we may choose $\epsilon > 0$ so small, such that
\begin{equation} \label{394875039786jklhl4503746}
2/3 - 1/q + 2/(3r) - \epsilon >  1/2 - 1/(2r).
\end{equation}
 Then $\Delta_{x'} h \in W^{ 1/2 - 1/(2r) }_p ( 0,T ;  L_r (\Sigma)) \into W^{ 1/2 - 1/(2r) }_r ( 0,T ;  L_r (\Sigma))$.
Inequality \eqref{3948750397864503746} however is equivalent to $r < 7/(6/q-1)$ since $q <2$. Estimate \eqref{6087698} is a direct consequence of these considerations. Furthermore, whenever $h$ has vanishing trace at $t=0$, a standard extension argument allows to see that the estimate does not degenerate as $T \pfeil 0$, that is, $C(T)$ stays bounded as $T \pfeil 0$ since it only depends on $T_0$.

Choosing $q < 2$ close enough to $2$ we may assume that $ r > 3$, since $ 7/(6/q-1) \pfeil 7/2$ as $q \pfeil 2$.
%Since $ 7/(6/q-1) \pfeil 7/2$ as $q \pfeil 2$, we easily see that $r > 3$ is possible if only $q$ is close enough to $2$.
Now let $u \in W^1_r(0,T;L_r(\Omega)) \cap L_r(0,T;W^2_r(\Omega \backslash \Sigma)) $ for $r > 3$.
We may use the embedding
\begin{equation}
W^1_r(0,T;L_r(\Omega)) \cap L_r(0,T;W^2_r(\Omega \backslash \Sigma)) \into BUC([0,T]; W^{2-2/r}_r(\Omega \backslash \Sigma)),
\end{equation}
cf. \cite{amann}, to see that $Du \in BUC([0,T] ; W^{1-2/r}_r(\Omega \backslash \Sigma))$, which then in turn yields $Du \in L_\infty(0,T;  L_r(\Omega))$. It also follows that 
%$Du \in H^{1/2 - 3/(2r) - \varepsilon}_r(0,T;L_\infty(\Omega))$ for any $\varepsilon > 0$, \into L_{2r/(5-r)}(0,T;L_\infty(\Omega))$. 
$Du \in L_r(0,T; L_\infty(\Omega))$.
%. Since $r >3$, it is easy to see that $Du \in L_\infty(0,T;L_r(\Omega)) \cap L_r(0,T;L_\infty(\Omega))$.
Regarding the trace operator, we note that 
\begin{equation}
BUC([0,T]; W^{2-2/r}_r(\Omega \backslash \Sigma)) \into L_\infty(0,T; W^1_q(\Omega \backslash \Sigma)),
\end{equation}
whenever $ r \geq 5q/(q+3)$, which is surely satisfied since $r > 3$ and $q < 2$. The proof is complete.
\end{proof}
\begin{remark} \label{9348750w84736504384}
Let us comment on the regularity of solutions.
\begin{enumerate}
\item Note that we can choose from now on $p \in (6,\infty)$, $q \in (9/5,2) \cap (2p/(p+1), 2)$, and $r \in (3,7/2)$. In particular, the set of admissible indices is not empty.
\item Note that if $r > 5/2$, it holds that $u \in BUC([0,T] ; C^0(\Omega))$.
\item There is still room for improvement in these embeddings. For instance, $u$ is $L_\infty(0,T;L_\infty(\Omega))$ whenever $r \geq 5/2$. Furthermore, it can be shown that
\begin{equation*}
Du \in L_{2r/(5-r) - \varepsilon}(0,T;L_\infty(\Omega)) \cap L_\infty(0,T; L_{3r/(5-r)   - \varepsilon}(\Omega)),
\end{equation*}
for any small $\varepsilon > 0$.
This may be used to lower the index $r$ and consider initial data with lower regularity.
\end{enumerate}
\end{remark}

This motivates to choose the following setting for the solutions to the Two-phase Navier-Stokes/Mullins-Sekerka system and its linearization \eqref{345345345345hh345h345hh345}.

Let $T \in (0,\infty)$, $p \in (6,\infty)$, $q \in (9/5,2) \cap (2p/(p+1), 2)$, and $r \in (3,7/2)$ as in Theorem \ref{eiruherutert}. From now on, we will fix the integration scales $p,q$ and $ r$. We are looking for solutions $(u,\pi,h,\mu)$ of  \eqref{345345345345hh345h345hh345} with
\begin{equation*}
\begin{gathered} \label{927364873}
u \in W^1_r(0,T;L_r(\Omega)) \cap L_r(0,T;W^2_r(\Omega \backslash \Sigma)), \quad \pi \in L_r(0,T;\dot H^1_r(\Omega)), \\
\ljump \pi \rjump \in W^{1/2 - 1/(2r)}_r(0,T;L_r(\Sigma)) \cap L_r(0,T;W^{1-1/r}_r(\Sigma)), \\
h \in W^1_p(0,T;W^{1-1/q}_q(\Sigma)) \cap L_p(0,T;W^{4-1/q}_q(\Sigma)), \quad \mu \in L_p(0,T;W^2_q(\Omega \backslash \Sigma)). 
\end{gathered}
\end{equation*}
%We point out that as in \cite{wilkehabil} the additional regularity of $\ljump \pi \rjump$ is determined by the regularity of the Neumann trace of $u$ on $\Sigma$.

\subsection{Regularity of the data.}
To be able to derive a maximal regularity result, we will now deduce optimal regularity classes for the data in problem \eqref{345345345345hh345h345hh345}. Given a solution $(u,\pi, \ljump \pi \rjump, h, \mu)$ in the classes of \eqref{927364873}, we derive by standard trace theory
%\begin{gather}
%\partial_t u, \; \Delta u, \; \nabla \pi \in L_r(0,T;L_r(\Omega)), \quad \Delta \mu \in L_p(0,T;L_q(\Omega)), \\
%u|_\Sigma \in W^{1-1/(2r)}_r(0,T;L_r(\Sigma)) \cap L_r( 0,T; W^{2-1/r}_r(\Sigma)), \\
%u|_{S_j} \in W^{1-1/(2r)}_r(0,T;L_r(S_j)) \cap L_r( 0,T; W^{2-1/r}_r(S_j)), \quad j=1,2, \\
%Du \in W^{1/2}_r (0,T; L_r(\Omega)) \cap L_r(0,T;W^1_r(\Omega \backslash \Sigma)), \label{928389723} \\
%Du|_\Sigma \in W^{1/2-1/(2r)}_r(0,T;L_r(\Sigma)) \cap L_r(0;T;W^{1-1/r}_r(\Sigma)), \\
%Du|_{S_1} \in W^{1/2-1/(2r)}_r(0,T;L_r(S_1)) \cap L_r(0;T;W^{1-1/r}_r(S_1)), \\
%\mu|_\Sigma \in  L_p(0,T;W^{2-1/q}_q(\Sigma)) ,\\
%\nabla \mu|_{\partial\Omega} \in L_p(0,T; W^{1-1/q}_q(\partial\Omega)) , \\
%\partial_t h, \; \ljump \partial_3 \mu \rjump \in L_p(0,T;W^{1-1/q}_q(\Sigma)), \\
%\nabla_{x'} h|_{\partial\Sigma} \in F^{1-2/(3q)}_{pq}(0,T; L_q(\partial\Sigma)) \cap L_p(0,T; W^{3-2/q}_q(\partial\Sigma)) , \label{2345345345jjj345} \\
%u|_{t=0} \in W^{2-2/r}_{r} (\Omega \backslash \Sigma) , \quad h|_{t=0} \in B^{4-3/p-1/q}_{qp}(\Sigma) .
%\end{gather}
%For \eqref{2345345345jjj345} we shall refer to Appendix B in \cite{mulsekpaper12}.
the following necessary conditions for the data,
\begin{equation} \label{9238740374} \begin{gathered}
g_1 \in  L_r(0,T;L_r(\Omega)),  \quad
 g_2 \in L_r(0,T;W^1_r(\Omega \backslash \Sigma)), \\ 
g_3 , \; g_4 \in  W^{1/2-1/(2r)}_r(0,T;L_r(\Sigma)) \cap L_r(0;T;W^{1-1/r}_r(\Sigma)), \\
g_5 \in  W^{1-1/(2r)}_r(0,T;L_r(\Sigma)) \cap L_r( 0,T; W^{2-1/r}_r(\Sigma)),  \\
g_6 \in L_p(0,T;W^{1-1/q}_q(\Sigma)), \quad g_8 \in L_p(0,T;L_q(\Omega)), \\
g_7 \in F^{1-2/(3q)}_{pq}(0,T; L_q(\partial\Sigma)) \cap L_p(0,T; W^{3-2/q}_q(\partial\Sigma)) ,\\
g_9 \in  L_p(0,T;W^{2-1/q}_q(\Sigma)), \quad
g_{10} \in L_p(0,T; W^{1-1/q}_q(\partial\Omega)), \\
P_{S_1}g_{11} \in  W^{1/2-1/(2r)}_r(0,T;L_r(S_1)) \cap L_r(0;T;W^{1-1/r}_r(S_1)),\\
g_{12} \in W^{1-1/(2r)}_r(0,T;L_r(S_1)) \cap L_r( 0,T; W^{2-1/r}_r(S_1)),\\
g_{13} \in W^{1-1/(2r)}_r(0,T;L_r(S_2)) \cap L_r( 0,T; W^{2-1/r}_r(S_2)), \\
u_0 \in W^{2-2/r}_{r} (\Omega \backslash \Sigma) , \quad h_0 \in B^{4-3/p-1/q}_{qp}(\Sigma) . \end{gathered}
\end{equation}
For the regularity of $g_7$ we refer to Appendix A in \cite{mulsekpaper12}.
At this point we note that in \eqref {9238740374} the function $g_2$ does not have to have the time regularity of $Du$ in $\Omega \backslash \Sigma$. This is due to the fact that there is some compatibility condition hidden in the system stemming from the divergence equation, which inherits a certain time regularity for $(g_2,g_5,g_{12},g_{13})$. This will be discussed in the next section regarding compatibility conditions. However we clearly want to point out that $g_2$ being $ L_r(0,T;W^1_r(\Omega \backslash \Sigma))$ alone is a necessary but not a  sufficient condition.

\subsection{Compatibility conditions.}
We now shall discuss all the compatibility conditions for the data $(g_j)_{j=1}^{13}, u_0,h_0$ of system \eqref{345345345345hh345h345hh345}. In Lemma \ref{89306707676340dd} below we rigorously show these conditions all occur and are well-defined. The following observations have already been made in \cite{mulsekpaper12} and \cite{wilkehabil}.

At the starting point of the evolution at time $t=0$ we have to have that 
\begin{equation} \label{9238740374B}
\begin{gathered}
\div u_0 = g_2|_{t=0}, \quad - \ljump \mu^\pm \partial_3 (u_0)_{1,2} \rjump - \ljump \mu^\pm \nabla_{x'} (u_0)_3 \rjump = g_3|_{t=0}, \\
\ljump u_0 \rjump = g_5|_{t=0}, \quad u_0 \cdot \nu_{S_1} = g_{12}|_{t=0}, \quad u_0|_{S_2} = g_{13}|_{t=0}, \\ (-\nabla_{x'} h_0, 1)^\top \cdot \nu_{S_1} = g_7|_{t=0}, \quad 
P_{S_1} ( \mu^\pm (Du_0 + Du_0^\top)\nu_{S_1}) = P_{S_1} g_{11}|_{t=0},
\end{gathered} \end{equation}
by evaluating the respective equations at time zero. Here, $(u_0)_{1,2}$ denotes the vector in $\R^2$ with the first two entries of $u_0$, similarly $(u_0)_3$ denotes the last entry of $u_0$.

Since $\partial\Sigma \subseteq S_1 \not= \emptyset$ and bottom, top and walls of the container have a common boundary, $\partial S_1 \cap \partial S_2 \not= \emptyset$, there are additional compatibility conditions. Simply by comparing equations we get \begin{equation}  \label{9238740374C}
\begin{alignedat}{3}
& \ljump g_{12} \rjump  = g_5 \cdot \nu_{S_1}, \quad \quad && \text{on } \partial \Sigma, \\
& \ljump (g_{11} \cdot e_3)/{\mu^\pm} - \partial_3 g_{12} \rjump = \partial_{\nu_{S_1}} (g_5 \cdot e_3), &&\text{on } \partial\Sigma, \\
& P_{\partial\Sigma} [ ( D_{x'} \Pi g_5 + (D_{x'}\Pi g_5)^\top )\nu_{\partial\Sigma}] = \ljump P_{\partial\Sigma}\Pi g_{11} /{\mu^\pm} \rjump, \quad\quad && \text{on } \partial\Sigma, \\
&g_3 \cdot (\nu_{S_1})_{1,2} = - \ljump g_{11} \cdot e_3 \rjump, && \text{on } \partial\Sigma,  \\
&g_{13} \cdot \nu_{S_1} = g_{12}, && \text{on } \partial{S_2},  \\
&P_{\partial\Sigma}[  \mu^\pm ( D_{x'} \Pi g_{13} + (D_{x'} \Pi g_{13})^\top)\nu_{\partial\Sigma} ] = P_{\partial\Sigma} \Pi g_{11},& &\text{on } \partial S_2,\\
&\mu^\pm \partial_{\nu_{S_1}}(g_{13} \cdot e_3) + \mu^\pm \partial_3 g_{12} = g_{11} \cdot e_3, &&\text{on } \partial S_2. 
\end{alignedat} \end{equation}
Here, $\Pi v := (v_1,v_2) \in \R^2 $ for $v=(v_1,v_2,v_3) \in \R^3$ and $\nu_{\partial\Sigma} := \Pi \nu_{S_1}$. The projection then is given by $P_{\partial\Sigma} := I - \nu_{\partial\Sigma} \otimes \nu_{\partial\Sigma}$.
For further discussion we refer to \cite{wilkehabil}.
%Let us briefly explain how we get these. The first one follows easily from $\ljump g_{12} \rjump =\ljump u \cdot \nu_{S_1} \rjump =  \ljump u \rjump \cdot \nu_{S_1} = g_5 \cdot \nu_{S_1}$ on $\partial\Sigma$. For the second we note that by testing a vector with $e_3$ we end up in the tangent space of $S_1$, hence
%\begin{equation}
%\mu^\pm (Du+Du^\top)\nu_{S_1} \cdot e_3 = g_{11} \cdot e_3, \quad \text{on } S_1.
%\end{equation}
%This renders the equality
%\begin{equation}
%(g_{11} \cdot e_3)/{\mu^\pm} - \partial_3 g_{12} = (Du+Du^\top)\nu_{S_1} \cdot e_3 - \partial_3(u \cdot \nu_{S_1}) = Du^\top \nu_{S_1} \cdot e_3 = Du e_3 \cdot \nu_{S_1},
%\end{equation}
%at least on $S_1$. By definition of the directional derivative and taking the jump brackets we get \eqref{239846378}.

%The other compatibility conditions follow in a similar fashion by comparing the equations on the areas where two conditions meet (interface -- wall, wall -- top/bottom).
%DRITTE

%Now, \eqref{876067} is simply obtained by combining the pure-slip condition on $S_1$ together with the balance equation for the stress tensor, \eqref{876067b} simply states the compatibility between the Dirichlet condition on top and bottom and Navier condition on the walls for $u$.

We want to point out that there is no additional compatibility condition for $\partial_t g_7$ on $\partial \Sigma$ as there is in \cite{wilkehabil}, since $g_7$ does not have a well defined time derivative on $\partial \Sigma$ in our regularity class. This is due to the fact that we have a different maximal regularity class for $h$ as in \cite{wilkehabil}.

Finally we turn to the divergence equation and want to point out that there is another compatibility and regularity condition hidden in the system, which has already been investigated in \cite{wilkehabil}. For completeness we explain it here briefly.

Consider the divergence equation $\div u = g_2$ and multiply this equation with a testfunction $\phi \in W^1_{r'} (\Omega)$, where $r' = r/(r-1)$ is the conjugate exponent. An integration by parts on the two Lipschitz domains $\Omega \cap \{ x_3 \gtrless 0 \}$ %entails that
%\begin{align}
%\int_{\Omega \backslash \Sigma} \div u \; \phi dx = \int_{S_1} (u \cdot \nu_{S_1}) \phi|_{S_1} d S_1 + \int_{S_2} (u \cdot \nu_{S_2}) \phi|_{S_2} d S_2  \\ - \int_\Sigma ( \ljump u \rjump \cdot \nu_\Sigma ) \phi|_\Sigma d\Sigma + \int_{\Omega \backslash \Sigma} u \cdot \nabla \phi dx,
%\end{align}
and using the equations entails that
%see also Proposition A.14 in \cite{wilkehabil}.
%Employing the equations this reads as
\begin{equation} \begin{split} \label{345345}
\int_{\Omega \backslash \Sigma} g_2 \phi dx - \int_{S_1} g_{12} \phi|_{S_1} d S_1 - \int_{S_2} (g_{13} \cdot \nu_{S_2}) \phi|_{S_2} d S_2  \\ + \int_\Sigma ( g_5 \cdot \nu_\Sigma ) \phi|_\Sigma d\Sigma =- \int_{\Omega \backslash \Sigma} u \cdot \nabla \phi dx, \end{split}
\end{equation}
see also Proposition A.14 in \cite{wilkehabil}.
Hence the functional $\phi \mapsto \langle (g_2,g_5,g_{12},g_{13}), \phi \rangle$ defined by the left hand side of \eqref{345345} is continuous on $W^1_{r'}(\Omega)$ with respect to the seminorm $| \nabla \cdot |_{L_{r'}(\Omega)}$. Since $C_0^\infty(\Omega) \subseteq W^1_{r'}(\Omega)$ is dense in the homogeneous space $\dot H^1_{r'}(\Omega)$ with respect to this seminorm, it follows that $\phi \mapsto \langle (g_2,g_5,g_{12},g_{13}), \phi \rangle$ defines a functional on $\dot H^1_{r'}(\Omega)$. In other words,
$
(g_2,g_5,g_{12},g_{13}) \in \hat H^{-1}_r(\Omega) := (\dot H^1_{r'}(\Omega))'.
$
The norm of $(g_2,g_5,g_{12},g_{13}) $ in $\hat H^{-1}_r(\Omega)$ is then given by
\begin{equation*}
| (g_2,g_5,g_{12},g_{13}) |_{\hat H^{-1}_r(\Omega)} := \sup \{   \langle (g_2,g_5,g_{12},g_{13}), \phi \rangle / | \nabla \phi |_{L_{r'}(\Omega)} : \phi \in W^{1}_{r'}(\Omega)   \}.
\end{equation*}
We now turn again to the equations. Since $u \in W^1_r(0,T;L_r(\Omega))$, it follows from \eqref{345345} that
$
\frac{d}{dt}(g_2,g_5,g_{12},g_{13})
$
is well defined and is in $L_r(0,T; \hat H^{-1}_r(\Omega))$. Consequently,
\begin{equation} \label{9238740374D}
(g_2,g_5,g_{12},g_{13}) \in W^1_r(0,T; \hat H^{-1}_r(\Omega))
\end{equation}
is another necessary compatibility and regularity condition.
We close this subsection by showing that the compatibility conditions we have deduced above are all well-defined conditions.
\begin{lemma}
\label{89306707676340dd} 
Let $ r > 3$. Then all appearing traces and hence the compatibility conditions are all well-defined.
\end{lemma}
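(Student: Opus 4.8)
The plan is to verify, one by one, that every trace operator implicitly used in the compatibility conditions \eqref{9238740374B}, \eqref{9238740374C} and \eqref{9238740374D} maps the relevant data space continuously into a well-defined target space, so that the equations make pointwise (or distributional) sense on the lower-dimensional manifolds $\Sigma$, $\partial\Sigma$, $\partial S_1 \cap \partial S_2$. The key input is the hypothesis $r > 3$: since $\dim\Sigma = 2$, the spaces appearing for $g_3,g_4,g_5,P_{S_1}g_{11},g_{12},g_{13}$ have enough differentiability that all required traces onto $\partial\Sigma$ exist, and moreover the Sobolev indices stay positive after each trace. Concretely, I would first record the anisotropic trace theorem (cf.\ \cite{meyriesveraarpointwise}, \cite{pruessbuch}): for a function in $W^{1-1/(2r)}_r(J;L_r(S)) \cap L_r(J;W^{2-1/r}_r(S))$ with $\dim S = 2$, the spatial trace onto a curve loses $1/r$ derivatives in space, landing in $W^{1-1/(2r)-1/(2r)}_r \cap L_r(W^{2-2/r}_r)$-type classes, and $2 - 2/r > 0$, $1 - 1/r > 0$ for $r > 3$; similarly $Du$-type data in $W^{1/2-1/(2r)}_r(J;L_r) \cap L_r(J;W^{1-1/r}_r)$ restrict to $\partial\Sigma$ with $1 - 2/r > 0$ when $r > 3$ (indeed $r > 2$ suffices here, but the other traces force $r>3$). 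For the time trace at $t=0$ in \eqref{9238740374B}, one uses that $W^1_r(J;L_r)\cap L_r(J;W^2_r)$ embeds into $BUC(\bar J; W^{2-2/r}_r)$, and $W^{1-1/(2r)}_r(J;L_r)\cap L_r(J;W^{2-1/r}_r) \into BUC(\bar J; W^{2-2/r-1/r}_r)$ with $2 - 3/r > 0$ exactly because $r > 3$, which is the crucial threshold making the time trace of $g_3,g_5,g_{11},g_{12},g_{13}$ well-defined.

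After the abstract trace results are in place, I would go through \eqref{9238740374C} term by term: the first two conditions involve traces onto $\partial\Sigma$ of $g_{12}$, $\partial_3 g_{12}$, $g_5\cdot e_3$, $\partial_{\nu_{S_1}}(g_5\cdot e_3)$, $g_{11}\cdot e_3$, all of which lie in the data classes just discussed; since $g_5 \in W^{1-1/(2r)}_r(J;L_r(\Sigma))\cap L_r(J;W^{2-1/r}_r(\Sigma))$ carries two spatial derivatives, $\partial_{\nu_{S_1}}(g_5\cdot e_3)$ makes sense and its trace on $\partial\Sigma$ lives in a space with positive regularity. The third and sixth conditions involve the symmetrized tangential gradient $D_{x'}\Pi g_5$, $D_{x'}\Pi g_{13}$ on $\partial\Sigma$, $\partial S_2$ — again one spatial derivative of an $W^{2-1/r}_r$-in-space object, traced onto a curve, and $r>3$ keeps the resulting index positive. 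The conditions on $\partial S_2$ (the bottom/top edges) are handled identically with $S_2$ in place of $\Sigma$. For \eqref{9238740374D} I would invoke the discussion already carried out before the lemma: the pairing \eqref{345345} shows $(g_2,g_5,g_{12},g_{13})$ defines an element of $\hat H^{-1}_r(\Omega)$, and the identity $\tfrac{d}{dt}\langle(g_2,g_5,g_{12},g_{13}),\phi\rangle = -\int_{\Omega\setminus\Sigma}\partial_t u\cdot\nabla\phi\,dx$ together with $u \in W^1_r(J;L_r(\Omega))$ gives the time derivative in $L_r(J;\hat H^{-1}_r(\Omega))$; this step is essentially bookkeeping since the functional-analytic content was spelled out above \eqref{9238740374D}.

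The main obstacle — really the only non-routine point — is making sure the iterated trace (first in one spatial direction to get onto $S_1$ or $\Sigma$, then onto the edge $\partial\Sigma$, sometimes combined with a time trace at $t=0$) is handled with the correct anisotropic Sobolev/Besov bookkeeping, so that no index is driven to zero or below; this is exactly where $r > 3$ is used sharply (for the $t=0$ traces of the $g_3,g_5,g_{11},g_{12},g_{13}$ group, one needs $2 - 3/r > 0$, and for the edge trace of $g_5$ after one normal derivative one needs the resulting $2 - 1/r - 1/r = 2 - 2/r > 0$, both forcing $r$ past $3$ — the first being the binding constraint). I expect the proof to consist of a short list: (i) cite the anisotropic trace and embedding theorems; (ii) check the spatial trace $\Sigma \to \partial\Sigma$ and $S_2 \to \partial S_2$ for each of $g_3,g_5,g_{11},g_{12},g_{13}$ and their admissible derivatives, noting the index computations; (iii) check the time trace at $t=0$ for the same quantities using the $BUC$-embeddings; (iv) refer to Appendix A of \cite{mulsekpaper12} for $g_7$ and to the paragraph preceding the lemma for \eqref{9238740374D}. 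No single step is deep, but the number of terms makes it worth organizing as a table of regularity indices.
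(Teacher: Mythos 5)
Your overall strategy (anisotropic trace theory plus index bookkeeping, $g_7$ handled by the choice of $p,q$, the divergence condition \eqref{9238740374D} by the discussion preceding the lemma) is the same as the paper's, but the one step where the hypothesis $r>3$ actually matters is carried out incorrectly. You place $g_3$ and $P_{S_1}g_{11}$ in the Dirichlet-type class $W^{1-1/(2r)}_r(0,T;L_r)\cap L_r(0,T;W^{2-1/r}_r)$; in fact, by \eqref{9238740374}, $g_3$, $g_4$ and $P_{S_1}g_{11}$ lie only in the half-order class $W^{1/2-1/(2r)}_r(0,T;L_r)\cap L_r(0,T;W^{1-1/r}_r)$. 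For these the existence of the time trace at $t=0$ requires the temporal smoothness to exceed $1/r$, i.e. $1/2-1/(2r)>1/r$, which is precisely equivalent to $r>3$; this is the binding constraint and is exactly the computation the paper's proof performs for $g_3$. Your substitute conditions do not capture it: $2-3/r>0$ is equivalent to $r>3/2$ (not $r>3$, despite your claim), and $2-2/r>0$ only needs $r>1$, so your bookkeeping would ``prove'' the lemma already for $r>3/2$, which shows the critical trace was never actually verified in the correct space.

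The rest of your outline (spatial traces onto $\partial\Sigma$ and $\partial S_2$ of $g_5,g_{11},g_{12},g_{13}$ and their admissible derivatives, the $BUC$-embedding for $u$-type data, and the identification of $\tfrac{d}{dt}(g_2,g_5,g_{12},g_{13})$ in $L_r(0,T;\hat H^{-1}_r(\Omega))$) is sound and matches the paper's ``by taking traces in the spatial variables one easily sees that all the other traces are well-defined.'' To repair the proof, replace the time-trace argument for the group $g_3$, $g_4$, $P_{S_1}g_{11}$ by the correct criterion $1/2-1/(2r)-1/r>0\iff r>3$, and note that for $g_5,g_{12},g_{13}$ (which genuinely are in the $W^{1-1/(2r)}_r$-in-time class) the weaker condition $1-1/(2r)>1/r$, i.e. $r>3/2$, suffices, so they are not the source of the restriction $r>3$.
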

\begin{proof}
Firstly, $g_j$, $j=2,3,5,7,12,13$, and $P_{S_1}g_{11}$ all have a well-defined trace at $t=0$ since $r > 3$. Indeed, the condition for $g_7$ is independent of $r$  (and fulfilled by choice of $p$ and $q$) and the rest easily follow by trace theory. Pick for instance $g_3$. Then $g_3$ surely has a trace at $t=0$ whenever $1/2 - 1/(2r) - 1/r > 0$. This is however equivalent to $r > 3$. 
By taking traces in the spatial variables one easily sees that all the other traces are well-defined. 
\end{proof}

\subsection{Maximal regularity}
Let us consider the linear problem
\begin{equation} \label{234345345345kk34kk3k453453}
\begin{alignedat}{2} 
\rho^\pm \partial_t  u  - \mu^\pm \Delta u  + \nabla \pi &= g_1 , &&\text{in } \Omega \backslash \Sigma, \\
\operatorname{div} u &= g_2, &&\text{in } \Omega \backslash \Sigma, \\
- \ljump \mu^\pm \partial_3 (u_1,u_2) \rjump - \ljump \mu^\pm \nabla_{x'} u_3 \rjump &= g_3, && \text{on } \Sigma,   \\
-2 \ljump \mu^\pm \partial_3 u_3 \rjump + \ljump \pi \rjump - \sigma \Delta_{x'} h &= g_4, && \text{on } \Sigma, \\
\ljump u \rjump &= g_5, && \text{on } \Sigma, \\
\partial_t h  - (u_3^+ + u_3^-)/2+\ljump \partial_3 \mu \rjump &=  g_6, && \text{on } \Sigma, \\
(-\nabla_{x'} h, 1)^\top \cdot \nu_{S_1} &= g_7, && \text {on } \partial \Sigma, \\
\Delta \mu &= g_8, &&\text {in } \Omega \backslash \Sigma, \\
\mu|_{\Sigma} - \sigma \Delta_{x'} h &= g_9, && \text {on } \Sigma, \\
\nu_{\partial\Omega} \cdot \nabla \mu|_{\partial\Omega} &= g_{10}, && \text{on } \partial\Omega \backslash\Sigma, \\
P_{S_1} \left( \mu^\pm (Du + Du^\top) \nu_{S_1} \right) &=P_{S_1}g_{11}, \quad\quad\quad && \text{on } S_1 \backslash \partial\Sigma, \\
u \cdot \nu_{S_1} &= g_{12}, && \text {on } S_1 \backslash \partial \Sigma, \\
u &= g_{13}, && \text{on } S_2, \\
u(0) &= u_0, && \text{on } \Omega \backslash \Sigma, \\
h (0) &= h_0, && \text{on } \Sigma. 
\end{alignedat}
\end{equation}
The main result on maximal regularity for \eqref{234345345345kk34kk3k453453} is the following.
\begin{theorem} \label{maxregodfpgudfphgu}
Let $\mu^\pm, \rho^\pm, \sigma >0$ be constant, $-\infty < L_1 < 0 < L_2 < \infty$, $(p,q,r)$ as in Theorem \ref{thmr} and $\Sigma \subset \R^2$ be a bounded, smooth domain. Let $\Omega := \Sigma \times (L_1,L_2)$, $S_1 := \partial\Sigma \times (L_1,L_2)$, and $S_2 := \Sigma \times \{ L_1 , L_2\}$. Let $0 < T < \infty$. The coupled linear system \eqref{234345345345kk34kk3k453453} then admits a unique solution $(u,\pi, \ljump \pi \rjump, h , \mu)$ with regularity \eqref{927364873},
if and only if the data satisfy the regularity and compatibility conditions
\eqref{9238740374}, \eqref{9238740374B}, \eqref{9238740374C}, and \eqref{9238740374D}. 
Furthermore, the solution map $[((g_j)_{j = 1,...,13},u_0,h_0) \mapsto (u, \pi, \ljump \pi \rjump, h, \mu)]$ between the above spaces is continuous.
\end{theorem}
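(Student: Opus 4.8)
The plan is to decouple the system into (i) a two-phase Stokes problem for $(u,\pi)$ on $\Omega\setminus\Sigma$ with the full set of velocity-type boundary conditions, treating $\sigma\Delta_{x'}h$ as given inhomogeneity in the normal stress balance $\eqref{234345345345kk34kk3k453453}_4$, and (ii) a Mullins-Sekerka-type problem for $(h,\mu)$ on $\Omega\setminus\Sigma$, treating the normal velocity trace $(u_3^++u_3^-)/2$ in $\eqref{234345345345kk34kk3k453453}_6$ and the pressure jump $\ljump\pi\rjump$ in $\eqref{234345345345kk34kk3k453453}_4$ as given. For the Stokes part one works in the $L_r$-scale; because $r>3$ there are no Triebel-Lizorkin data spaces and one may invoke the known maximal regularity results of Pr\"uss and Simonett \cite{prsimNSST} for the two-phase Stokes system (after the standard reduction removing the $\hat H^{-1}_r$ compatibility obstruction for the divergence datum $(g_2,g_5,g_{12},g_{13})$ via a suitable right inverse of the divergence). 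For the Mullins-Sekerka part one works in the $L_p$-$L_q$-scale and invokes Theorem 5.1 of \cite{mulsekpaper12}, which gives the solution class $h\in W^1_p(0,T;W^{1-1/q}_q(\Sigma))\cap L_p(0,T;W^{4-1/q}_q(\Sigma))$, $\mu\in L_p(0,T;W^2_q(\Omega\setminus\Sigma))$.

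The coupling is then closed by a fixed-point argument. Given a candidate $h$ in its maximal regularity class, Theorem \ref{thmr} shows that $\sigma\Delta_{x'}h$ lies in $W^{1/2-1/(2r)}_r(0,T;L_r(\Sigma))\cap L_r(0,T;W^{1-1/r}_r(\Sigma))$, which is exactly the regularity of the data $g_4$, so the Stokes solver produces $(u,\pi,\ljump\pi\rjump)$ in the class \eqref{927364873} together with the trace $u_3|_\Sigma$; Theorem \ref{thmr} also gives $\operatorname{tr}_\Sigma u\in L_\infty(0,T;W^{1-1/q}_q(\Sigma))\hookrightarrow L_p(0,T;W^{1-1/q}_q(\Sigma))$, which is the regularity of $g_6$, and moreover $\ljump\pi\rjump$ has better-than-necessary time regularity, so that feeding $(u_3^++u_3^-)/2$ into $\eqref{234345345345kk34kk3k453453}_6$ and $\ljump\pi\rjump$ into $\eqref{234345345345kk34kk3k453453}_4$ and solving the Mullins-Sekerka problem returns a new $h$. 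On a short time interval (using that the relevant embedding constants do not degenerate as $T\downarrow0$ once one restricts to vanishing traces at $t=0$, as recorded in Theorem \ref{thmr}), this map is a contraction; the fixed point yields a solution on $[0,T_*]$, and since the problem is linear this local solution extends to the whole interval $[0,T]$ by a standard continuation argument (or, equivalently, by an a priori estimate plus the method of continuity, the $T$-independence of the relevant bounds being guaranteed by \eqref{6087698} and \eqref{4354ffff35}).

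The necessity direction is routine: if $(u,\pi,\ljump\pi\rjump,h,\mu)$ has regularity \eqref{927364873}, then \eqref{9238740374} follows from trace theory, \eqref{9238740374B} and \eqref{9238740374C} by evaluating the equations at $t=0$ and on $\partial\Sigma$, $\partial S_2$ respectively (all traces being well-defined by Lemma \ref{89306707676340dd}), and \eqref{9238740374D} from the integration-by-parts identity \eqref{345345}. Continuity of the solution map is inherited from the continuity of the two constituent solvers and the fixed-point construction (the solution depending linearly and boundedly on the data through a Neumann series / uniform contraction estimate).

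**Main obstacle.** The delicate point is the \emph{compatibility bookkeeping at the interface between the two solvers}: one must verify that the output of the Stokes solver delivers $u_3|_\Sigma$ with \emph{strictly more} time regularity than $g_6\in L_p(0,T;W^{1-1/q}_q(\Sigma))$ requires — so that the Navier-Stokes contribution is genuinely lower order in the $h$-evolution — while simultaneously the input $\sigma\Delta_{x'}h$ into the Stokes normal-stress balance does not exceed the admissible regularity for $g_4$. This is precisely the tension that forced the introduction of the third scale $r$, and the quantitative heart of it is Theorem \ref{thmr}; the remaining work is to check that the compatibility conditions \eqref{9238740374B}, \eqref{9238740374C}, \eqref{9238740374D} are preserved under the decoupling (i.e. that the data handed to each sub-solver satisfy that solver's own compatibility conditions) and that the fixed-point map respects the vanishing-trace normalization needed for $T$-uniform constants. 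The mixed-scale nature of the coupling — $L_r$ for $(u,\pi)$ versus $L_p$-$L_q$ for $(h,\mu)$ — means the embeddings of Theorem \ref{thmr} must be used repeatedly and with care, but once those are in hand the argument is structurally the standard decouple-and-iterate scheme.
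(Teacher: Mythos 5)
Your proposal is correct and takes essentially the same route as the paper: solve the two-phase Stokes part in the $L_r$-scale (the paper uses Theorem A.11 of \cite{wilkehabil}) with $\sigma \Delta_{x'} h$ as admissible normal-stress datum, solve the Mullins-Sekerka part in the $L_p$-$L_q$ scale via \cite{mulsekpaper12}, and close the coupling by a small-time contraction -- the paper phrases this as a Neumann series for $I + L_{MS}^{-1}B$ after first reducing to vanishing time traces via two auxiliary solves, with the smallness factor $\tau^{1/p}$ and $T$-independent constants supplied by Theorem \ref{thmr}, followed by stepping forward in time to cover $[0,T]$. The only slip is harmless bookkeeping: $\ljump \pi \rjump$ never enters the Mullins-Sekerka subproblem (the coupling from the Stokes side is solely through $(u_3^+ + u_3^-)/2$ in $\eqref{234345345345kk34kk3k453453}_6$, while $\eqref{234345345345kk34kk3k453453}_4$ remains with the Stokes solver, where $\ljump \pi \rjump$ is an unknown and $\sigma\Delta_{x'}h$ is the datum), so your remark about feeding $\ljump \pi \rjump$ into $\eqref{234345345345kk34kk3k453453}_4$ is a mislabeling rather than a gap.
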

\begin{proof}
First we reduce to trivial initial data by solving an auxiliary ninety degree angle linear Mullins-Sekerka problem of type

\begin{align*}
\partial_t \bar h  +\ljump \partial_3  \bar \mu \rjump &=  g_6, & \text{on } \Sigma, \\
(-\nabla_{x'}  \bar h, 1)^\top \cdot \nu_{S_1} &= g_7, & \text {on } \partial \Sigma, \\
\Delta  \bar \mu &= g_8, &\text {in } \Omega \backslash \Sigma, \\
 \bar \mu|_{\Sigma} - \sigma \Delta_{x'}  \bar h &= g_9, & \text {on } \Sigma, \\
\nu_{\partial\Omega} \cdot \nabla  \bar \mu|_{\partial\Omega} &= g_{10}, & \text{on } \partial\Omega \backslash\Sigma, \\
 \bar h (0) &= h_0, & \text{on } \Sigma, 
\end{align*}

by functions
\begin{equation*}
\bar h \in W^1_p(0,T;W^{1-1/q}_q(\Sigma)) \cap L_p(0,T;W^{4-1/q}_q(\Sigma)), \quad \bar \mu \in L_p(0,T;W^2_q(\Omega \backslash \Sigma)).
\end{equation*}
%Here we point out that the compatibility condition is satisfied, hence the problem is uniquely solvable.
Then we solve an auxiliary two-phase Stokes problem
\begin{equation}\label{hehugrferz6823543}
\begin{alignedat}{2}
\rho^\pm \partial_t \bar  u  - \mu^\pm \Delta  \bar u  + \nabla \bar  \pi &= g_1 , &&\text{in } \Omega \backslash \Sigma, \\
\operatorname{div}  \bar u &= g_2, &&\text{in } \Omega \backslash \Sigma, \\
- \ljump \mu^\pm \partial_3 ( \bar u_1, \bar u_2) \rjump - \ljump \mu^\pm \nabla_{x'}  \bar u_3 \rjump &= g_3, & &\text{on } \Sigma,   \\
-2 \ljump \mu^\pm \partial_3  \bar u_3 \rjump + \ljump  \bar \pi \rjump   &= g_4 - \sigma \Delta_{x'} \bar h, \quad\quad && \text{on } \Sigma,  \\
\ljump  \bar u \rjump &= g_5, & &\text{on } \Sigma, \\
P_{S_1} \left( \mu^\pm (D \bar u + D \bar u^\top) \nu_{S_1} \right) &=P_{S_1}g_{11}, && \text{on } S_1 \backslash \partial\Sigma, \\
 \bar u \cdot \nu_{S_1} &= g_{12}, && \text {on } S_1 \backslash \partial \Sigma, \\
 \bar u &= g_{13}, & &\text{on } S_2, \\
 \bar u(0) &= u_0, && \text{on } \Omega \backslash \Sigma,
\end{alignedat}
\end{equation}
using Theorem A.11 in \cite{wilkehabil} by functions
\begin{equation}
\bar u \in W^1_r(0,T; L_r(\Omega)) \cap L_r(0,T;W^2_r(\Omega \backslash \Sigma)), \quad \bar \pi \in L_r(0,T; \dot H^1_r(\Omega \backslash \Sigma)),
\end{equation}
with $\ljump \bar \pi \rjump \in W^{1/2 - 1/(2r)}_r(0,T;L_r(\Sigma)) \cap L_r(0,T;W^{1-1/r}_r(\Sigma))$. Here we want to point out two things: $\Delta_{x'} \bar h$ has sufficient regularity to be admissible data and that there is no compatibility condition stemming from $\eqref{hehugrferz6823543}_4$. Hence $g_4 - \sigma \Delta_{x'}\bar h$ is admissible data for the problem. Having now $(\bar u, \bar \pi, \bar h, \bar \mu)$ at hand, we are left to solve
\begin{align*} 
\rho^\pm \partial_t  u  - \mu^\pm \Delta u  + \nabla \pi &= 0 , &&\text{in } \Omega \backslash \Sigma, \\
\operatorname{div} u &= 0, &&\text{in } \Omega \backslash \Sigma, \\
- \ljump \mu^\pm \partial_3 (u_1,u_2) \rjump - \ljump \mu^\pm \nabla_{x'} u_3 \rjump &= 0, && \text{on } \Sigma,    \\
-2 \ljump \mu^\pm \partial_3 u_3 \rjump + \ljump \pi \rjump - \sigma \Delta_{x'} h  &= 0, && \text{on } \Sigma, \\
\ljump u \rjump &= 0, && \text{on } \Sigma, \\
\partial_t h  - u_3|_\Sigma +\ljump \partial_3 \mu \rjump &=  (\bar u_3^+ + \bar u_3^-)/2, && \text{on } \Sigma, \\
(-\nabla_{x'} h, 1)^\top \cdot \nu_{S_1} &= 0, && \text {on } \partial \Sigma, \\
\Delta \mu &= 0, &&\text {in } \Omega \backslash \Sigma, \\
\mu|_{\Sigma} - \sigma \Delta_{x'} h &= - \sigma\Delta_{x'} \bar h, && \text {on } \Sigma, \\
\nu_{\partial\Omega} \cdot \nabla \mu|_{\partial\Omega} &= 0, && \text{on } \partial\Omega \backslash\Sigma, \\
P_{S_1} \left( \mu^\pm (Du + Du^\top) \nu_{S_1} \right) &=0, && \text{on } S_1 \backslash \partial\Sigma, \\
u \cdot \nu_{S_1} &= 0,& & \text {on } S_1 \backslash \partial \Sigma, \\
u &= 0, && \text{on } S_2, \\
u(0) &= 0, && \text{on } \Omega \backslash \Sigma, \\
h (0) &= 0, && \text{on } \Sigma.
\end{align*}
We do this as follows. Define $L_{MS} : {_0} \mathbb E_{MS,T} \pfeil {_0} \mathbb F_{MS,T}$ by
\begin{equation*}
L_{MS} : (h,\mu) \mapsto \begin{pmatrix}
\partial_t h - \ljump \partial_3 \mu \rjump \\ \Delta \mu  \\ \mu|_\Sigma - \sigma \Delta_{x'} h \\ n_{\partial\Omega} \cdot \nabla \mu |_{\partial\Omega} \\ (-\nabla_{x'} h|_{\partial\Sigma}, 1)^\top \cdot \nu_{S_1}
\end{pmatrix}
\end{equation*}
where
\begin{equation*}
_0 \mathbb E_{MS,T} := [ {_0 W}^1_p (0,T; W^{1-1/q}_q(\Sigma)) \cap L_p(0,T; W^{4-1/q}_q(\Sigma)) ] \times L_p(0,T; W^2_q(\Omega \backslash \Sigma)),
\end{equation*}
and
\begin{equation*}
\begin{alignedat}{1}
_0 &\mathbb  F_{MS,T} := L_p(0,T, W^{1-1/q}_q(\Sigma)) \times L_p(0,T; L_q(\Omega)) \times L_p(0,T; W^{1-1/q}_q(\Sigma)) \\
&\times L_p(0,T; W^{1-1/q}_q(\partial \Omega)) \times [  {_0 F}^{1-2/(3q)}_{pq}(0,T; L_q(\partial\Sigma)) \cap L_p(0,T; W^{3-2/q}_q(\partial\Sigma))   ].
\end{alignedat}
\end{equation*}
In \cite{mulsekpaper12} we have shown that $L_{MS} : {_0} \mathbb E_{MS,T} \pfeil {_0} \mathbb F_{MS,T}$ is boundedly invertible. Define $L_S : {_0 \mathbb E}_{MS,T} \pfeil [{_0 W}^1_r(0,T;L_r(\Omega)) \cap L_r(0,T;W^2_r(\Omega \backslash \Sigma))]$ by $L_S(h) := u$, where $(u,\pi)$ is the unique solution of
\begin{align*} 
\rho^\pm \partial_t  u  - \mu^\pm \Delta u  + \nabla \pi &= 0 , &&\text{in } \Omega \backslash \Sigma, \\
\operatorname{div} u &= 0, &&\text{in } \Omega \backslash \Sigma, \\
- \ljump \mu^\pm \partial_3 (u_1,u_2) \rjump - \ljump \mu^\pm \nabla_{x'} u_3 \rjump &= 0, && \text{on } \Sigma,    \\
-2 \ljump \mu^\pm \partial_3 u_3 \rjump + \ljump \pi \rjump   &= \sigma \Delta_{x'} h,& & \text{on } \Sigma, \\
\ljump u \rjump &= 0, && \text{on } \Sigma, \\
P_{S_1} \left( \mu^\pm (Du + Du^\top) \nu_{S_1} \right) &=0, && \text{on } S_1 \backslash \partial\Sigma, \\
u \cdot \nu_{S_1} &= 0, && \text {on } S_1 \backslash \partial \Sigma, \\
u &= 0, && \text{on } S_2, \\
u(0) &= 0, && \text{on } \Omega \backslash \Sigma,
\end{align*}
cf. Theorem A.11 in \cite{wilkehabil}.
It then stems from Theorem \ref{thmr} that $L_S$ is well defined, linear and bounded. 
Define $B : {_0 \mathbb E}_{MS,T} \pfeil {_0 \mathbb F}_{MS,T}$ and $G(\bar u , \bar h) \in{_0 \mathbb F}_{MS,T}$ by
\begin{equation*}
B(h) := (-L_S(h)|_\Sigma, 0,0,0,0)^\top, \quad G(\bar u, \bar h) := ( (\bar u_3^+ + \bar u_3^-)/2 ,0, -\sigma \Delta_{x'} \bar h, 0,0)^\top.
\end{equation*}
We can hence rewrite the problem as
\begin{equation*}
L_{MS}(h,\mu) = -B(h) + G(\bar u , \bar h), \quad \text{ in } {_0 \mathbb F}_{MS,T}.
\end{equation*}
We now solve this equation by a Neumann series argument. Clearly this equation is equivalent to
\begin{equation*}
 ( I + L_{MS}^{-1} B) (h,\mu) = L_{MS}^{-1} G(\bar u, \bar h), \quad \text{ in } {_0 \mathbb F}_{MS,T},
\end{equation*}
hence it remains to show that
$
| L_{MS}^{-1} B |_{\mathcal B( {_0 \mathbb E_{MS,T}} )} \leq \frac{1}{2},
$
if $T > 0$ is small enough.
Then by a Neumann series argument, $( I + L_{MS}^{-1} B)$ is invertible and the theorem is shown. Since we now have that $L_{MS}$ is boundedly invertible and the norm of the inverse is independent of $T$ - recall we only consider functions with vanishing time trace at $t=0$ - the claim follows from Theorem \ref{thmr}. Indeed,
\begin{equation*}
\begin{alignedat}{1}
| B(h)|_{ {_0 \mathbb F}_{MS,\tau}} &= | L_S(h)|_{L_p(0,\tau;W^{1-1/q}_q(\Sigma))} \leq \tau^{1/p} |L_S(h)|_{L_\infty(0,\tau;W^{1-1/q}_q(\Sigma))} \\ &\leq \tau^{1/p}|h|_{ _0 \mathbb E_{MS,\tau}}, \quad \tau > 0.
\end{alignedat}
\end{equation*}
Note that again since $h$ has vanishing time trace, all embeddings in Theorem \ref{thmr} are time-independent. In particular, by choosing $\tau>0$ sufficiently small, we get a unique solution $(h,\mu)$ in the proper regularity class on $(0,\tau)$. Solving then the two-phase Stokes system for this particular $h$ gives a proper $(u,\pi)$ in the $L_r$-regularity scale, again on $(0,\tau)$. 

Shifting back the equations via $\tilde u(t) := u(t-\tau), \tilde \pi(t) := \pi(t-\tau), \tilde h(t) := h(t-\tau)$ and $\tilde \mu := \mu(t-\tau)$ we can again apply this argument and solve again on the same length time interval $(0,\tau)$, which in turn gives us now a solution on $(0,2\tau)$ in fact. Repeating the steps we can solve then the problem on $(0,T)$, cf. Section 2.3 in \cite{wilkehabil}.
\end{proof}

\section{Nonlinear Well-Posedness}
In this section we show local well-posedness for the full nonlinear problem \eqref{930487547650487gg5}. The main result is the following.
\begin{theorem}  \label{986876gg2983}
Let $\mu^\pm, \rho^\pm, \sigma >0$ be constant, $-\infty < L_1 < 0 < L_2 < \infty$, $p \in (6,\infty)$, $q \in (2p/(p+1) , 2) \cap (19/10 , 2)$, $ 3 < r <7/2$, $\Sigma \subset \R^2$ be a bounded, smooth domain. Let $\Omega := \Sigma \times (L_1,L_2)$, $S_1 := \partial\Sigma \times (L_1,L_2)$ be the walls and $S_2 := \Sigma \times \{ L_1 , L_2\}$ bottom and top of the container.
Furthermore let
$
(u_0 , h_0)\in W^{2-2/r}_r(\Omega \backslash \Sigma) \times  B^{4-1/q-3/p}_{qp}(\Sigma)$
be admissible by the compatibility conditions
\begin{equation}\label{937846586734b}
\begin{gathered} 
\div u_0 = G_d(h_0,u_0) , \quad \text{in } \Omega \backslash \Sigma, \\
- \ljump \mu^\pm \partial_3 (u_0)_{1,2} \rjump - \ljump \mu^\pm \nabla_{x'} (u_0)_3 \rjump = G_S^\parallel (u_0,h_0), \;  
\ljump u_0 \rjump = 0, \quad\text{on } \Sigma,\\
P_{S_1} ( \mu^\pm (Du_0 + Du_0^\top)\nu_{S_1}) = 0, \;
u_0 \cdot \nu_{S_1} = 0, \quad\text{on } S_1, \\
u_0|_{S_2} = 0, \; \text{on } S_2, \quad
(-\nabla_{x'} h_0, 1)^\top \cdot \nu_{S_1} = 0, \; \text{on } \partial \Sigma. 
\end{gathered}
\end{equation}

Then the full nonlinear (transformed) problem \eqref{930487547650487gg5} admits a unique local-in-time strong solution, that is, there is some $T_0 > 0$, such that for every $0 < T \leq T_0$ there is some $\varepsilon = \varepsilon(T) > 0$, such that whenever the smallness condition
\begin{equation} \label{98236736408763046ffdfg} 
| u_0 |_{W^{2-2/r}_{r}(\Omega\backslash \Sigma)} +
 | h_0 |_{B^{4-1/q-3/p}_{qp}(\Sigma)} \leq \varepsilon
\end{equation}
is satisfied there is a unique strong solution $(u,\pi, \ljump \pi \rjump, h, \mu)$
of \eqref{930487547650487gg5} on $(0,T)$ with regularity \eqref{927364873}.
\end{theorem}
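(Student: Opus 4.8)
The plan is to solve \eqref{930487547650487gg5} by a fixed-point argument based on the maximal regularity isomorphism of Theorem \ref{maxregodfpgudfphgu}. First I would reduce to trivial initial data: using the compatibility conditions \eqref{937846586734b}, choose a reference state $(u_*,\pi_*,h_*,\mu_*)$ in the class \eqref{927364873} with $(u_*,h_*)|_{t=0}=(u_0,h_0)$ (for instance by extending the initial data and solving the linear problem \eqref{234345345345kk34kk3k453453} with suitable data, or by an elementary extension in time). Writing $(u,\pi,\llbracket\pi\rrbracket,h,\mu)=(u_*,\pi_*,\llbracket\pi_*\rrbracket,h_*,\mu_*)+(\bar u,\bar\pi,\llbracket\bar\pi\rrbracket,\bar h,\bar\mu)$, the perturbation solves \eqref{234345345345kk34kk3k453453} with zero initial data and right-hand side of the form $N(\bar u,\bar\pi,\bar h,\bar\mu)$, where $N$ collects all the nonlinear terms $a^\pm,\bar a, G_d,G_S,G_\Sigma,G_c,G_\kappa,G_N,G_P^\pm$ evaluated at the full solution, together with the linear contributions of the reference state. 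Denoting by $L$ the linear operator from Theorem \ref{maxregodfpgudfphgu} and by $_0\mathbb E_T$, $_0\mathbb F_T$ the solution and data spaces with vanishing time trace, the problem becomes $L z = N(z)$ on $_0\mathbb F_T$, i.e. $z = L^{-1}N(z)$, and I would solve this by the contraction mapping principle on a small ball in $_0\mathbb E_T$.

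\textbf{Key estimates.} The heart of the proof is to show that $N:\ _0\mathbb E_T\to\ _0\mathbb F_T$ is well-defined, and that on a ball $B_R(0)\subset\ _0\mathbb E_T$ it is a contraction with small constant once $T$ and $R$ are small. Concretely I would establish, for $z_1,z_2$ in such a ball,
\begin{equation*}
|N(z_1)-N(z_2)|_{_0\mathbb F_T} \leq C\bigl(R + \omega(T)\bigr)\,|z_1-z_2|_{_0\mathbb E_T},
\end{equation*}
with $\omega(T)\to 0$ as $T\to 0$, plus $|N(0)|_{_0\mathbb F_T}\leq C\varepsilon$ coming from the reference state and thus from the initial data via \eqref{98236736408763046ffdfg}. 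The nonlinearities are all (at least) quadratic in the "small" quantities or carry factors like $D\Theta_h-I$, $e_3-\nu_{\Gamma_h}$, $\Delta-\Delta_h$, $K(h)-\Delta_{x'}h$, each of which vanishes at $h=0$ and is estimated by $|h|_\infty$ times lower-order norms; here the embeddings of Theorem \ref{thmr} (in particular $r>3$, so that $u,Du$ are essentially bounded and $\operatorname{tr}_\Sigma u\in L_\infty(0,T;W^{1-1/q}_q(\Sigma))$, and \eqref{6087698}) are exactly what guarantees that $\Delta_{x'}h$ and $u|_\Sigma$ land in the correct data spaces and that the nonlinear products are controlled. The quasilinear terms $a^\pm(h;D_x,D_x^2)(w,q)$ require care: one splits $(\Delta_h-\Delta)w$ and $(\nabla-\nabla_h)q$ using that the coefficients depend on $h$ through $D\Theta_h^{-1}$, which is small in $L_\infty$ (and its needed derivatives are controlled by the maximal regularity norm of $h$), so these terms are $L_r(0,T;L_r(\Omega))$-small; similarly $G_d\in L_r(0,T;W^1_r)$ and, crucially, the hidden divergence compatibility \eqref{9238740374D} — that $(G_d,G_5,G_{12},G_{13})\in W^1_r(0,T;\hat H^{-1}_r(\Omega))$ — must be verified for the nonlinear $G_d$, which follows by differentiating the weak form \eqref{345345} in time using $w\in W^1_r(0,T;L_r(\Omega))$.

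\textbf{Compatibility of the nonlinear data.} Before invoking Theorem \ref{maxregodfpgudfphgu} one must check that the nonlinear right-hand side satisfies the \emph{algebraic} compatibility conditions \eqref{9238740374B}, \eqref{9238740374C} at $t=0$; this is where \eqref{937846586734b} is used, since \eqref{937846586734b} is precisely the $t=0$ evaluation of \eqref{930487547650487gg5} and is consistent with \eqref{9238740374B}. The edge conditions \eqref{9238740374C} on $\partial\Sigma$ and $\partial S_2$ must likewise be checked for the nonlinear $G_j$'s; because they are evaluated at $t=0$ where the solution equals $(u_0,h_0)$, they reduce to geometric identities at the contact line which hold by \eqref{937846586734b} together with $\llbracket u_0\rrbracket=0$ and the ninety-degree condition. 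With these in hand, $N(z)$ lies in $_0\mathbb F_T$ for $z\in\ _0\mathbb E_T$ small, the contraction estimate closes the argument on a small interval $(0,T)$, and uniqueness follows from the contraction property; choosing $T_0$ first and then $\varepsilon=\varepsilon(T)$ small enough for each $T\le T_0$ gives the statement.

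\textbf{Main obstacle.} The principal difficulty is the coupling of the two very different regularity scales: the curvature term $\sigma\Delta_{x'}h$ enters the Stokes data $g_4$ and must have the $L_r$-time regularity dictated by $Du|_\Sigma$ while $h$ naturally lives only in the $L_p$-$W^q$ scale, and simultaneously $u|_\Sigma$ must be lower-order in the evolution equation for $h$ in the $L_p(0,T;W^{1-1/q}_q(\Sigma))$ scale. Theorem \ref{thmr} resolves exactly this by producing the intermediate exponent $r$, but in the nonlinear estimates one must carefully track which norm of $h$ (the $L_p$-$W^{4-1/q}_q$ one) controls which factor in terms like $K(h)-\Delta_{x'}h$, $(\Delta-\Delta_h)\vartheta$, $G_\Sigma$, so that the output sits in the $L_r$- resp. $L_p$-data space with a small constant; obtaining the $T$-decay $\omega(T)\to0$ rather than merely boundedness is the delicate point, and it relies on the non-degeneracy of the embedding constants as $T\downarrow0$ asserted at the end of Theorem \ref{thmr} for functions with vanishing time trace, which is why reducing to trivial initial data at the outset is essential.
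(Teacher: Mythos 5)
Your proposal is correct and follows essentially the same route as the paper: reduction to vanishing initial data via a reference solution $(u_*,\pi_*,h_*,\mu_*)$, reformulation as the fixed-point equation $z=\mathsf L^{-1}(\mathsf R(z+z_*)-\mathsf L z_*)$ using the maximal regularity isomorphism of Theorem \ref{maxregodfpgudfphgu}, and contraction estimates for the nonlinearities on a small ball in $_0\mathbb E(T)$ based on the embeddings of Theorem \ref{thmr} (with $T$-independent constants for vanishing time traces), which is exactly the content of Lemma \ref{23098476384756} in the paper. The only cosmetic difference is that the paper additionally observes that $G_P^\pm$ vanishes along solutions and can be dropped from $\mathsf R$, a simplification rather than a change of method.
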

\begin{proof}
We first again reduce the problem to $(u_0,h_0) = 0$. This can be done by solving an auxiliary problem first
%Due to the compatibility condition $\eqref{937846586734b}_7$ we can uniquely solve the linear problem
%\begin{subequations}
%\begin{align} \label{eq12adfgdfgdfgdfgertertTRL}
%\rho^\pm \partial_t  u_* - \mu^\pm \Delta u_*  + \nabla \pi_* &= 0 , &&\text{in } \Omega \backslash \Sigma, \\
%\operatorname{div} u_* &= 0, &&\text{in } \Omega \backslash \Sigma, \\
%- \ljump \mu^\pm \partial_3 (u_*)_{1,2} \rjump - \ljump \mu^\pm \nabla_{x'} (u_*)_3 \rjump &= 0, && \text{on } \Sigma,   \label{982347608efgrgrt45tz527364} \\
%-2 \ljump \mu^\pm \partial_3 (u_*)_3 \rjump + \ljump \pi_* \rjump - \sigma \Delta_{x'} h_* \label{2837469327zfthzh8641} &= 0, && \text{on } \Sigma, \\
%\ljump u_* \rjump &= 0, && \text{on } \Sigma, \\
%\partial_t h_* \label{28374eerzrtztz69327864} -  e_3 \cdot u_*|_\Sigma +\ljump \partial_3 \mu_* \rjump &=  0, && \text{on } \Sigma, \\
%(-\nabla_{x'} h_*, 1)^\top \cdot \nu_{S_1} &= 0, && \text {on } \partial \Sigma, \\
%\Delta \mu_* &= 0, &&\text {in } \Omega \backslash \Sigma, \\
%\mu_*|_{\Sigma} - \sigma \Delta_{x'} h_* &= 0, && \text {on } \Sigma, \\
%\nu_{\partial\Omega} \cdot \nabla \mu_*|_{\partial\Omega} &= 0, && \text{on } \partial\Omega \backslash\Sigma, \\
%P_{S_1} \left( \mu^\pm (Du_* + Du_*^\top) \nu_{S_1} \right) &=0, && \text{on } S_1 \backslash \partial\Sigma, \\
%u_* \cdot \nu_{S_1} &= 0, && \text {on } S_1 \backslash \partial \Sigma, \\
%u_* &= 0, && \text{on } S_2, \\
%u_*(0) &= 0, && \text{on } \Omega \backslash \Sigma, \\
%h_* (0) &= h_0, && \text{on } \Sigma, \label{eq12affertertTRLb}
%\end{align}
%\end{subequations}
by functions $(u_*, \pi_*, h_*, \mu_*)$ in the proper regularity classes, cf. Section 3.2 in \cite{wilkehabil}. 
%By this reduction we can reduce the fixed point argument to functions having vanishing time trace at $t=0$. This is important since then the embedding constants are independent of $T$, whence we can show contraction estimates for small times.

Let us now introduce notation. Let
\begin{gather*}
_0 \mathbb E_u(T) := {_0 W}^1_r(0,T; L_r(\Omega)) \cap L_r(0,T;W^2_r(\Omega \backslash \Sigma)), \quad
\mathbb E_\pi(T) := L_r(0,T; \dot H^1_r(\Omega \backslash \Sigma)), \\
_0\mathbb E_q(T) :={_0 W}^{1/2 - 1/(2r)}_r(0,T;L_r(\Sigma)) \cap L_r(0,T;W^{1-1/r}_r(\Sigma)), \\
_0\mathbb E_h(T) := {_0 W}^1_p(0,T;W^{1-1/q}_q(\Sigma)) \cap L_p(0,T;W^{4-1/q}_q(\Sigma)),
 \end{gather*}
and $\mathbb E_\mu (T) := L_p(0,T;W^2_q(\Omega \backslash \Sigma))$. Furthermore, let
\begin{equation*}
_0 \mathbb E(T) := {_ 0\mathbb E_u(T)} \times  \mathbb E_\pi(T) \times {_0 \mathbb E_q(T)} \times {_0 \mathbb E_h(T)}  \times \mathbb E_\mu(T)  \cap \{ (u,\pi,q,h,\mu) :  q = \ljump \pi \rjump \}.
\end{equation*}
Moreover, let 
\begin{gather*}
\mathbb F_1(T) :=L_r(0,T;L_r(\Omega)), \quad \mathbb F_2(T) :=  L_r(0,T;W^1_r(\Omega \backslash \Sigma)), \\
\mathbb F_3(T) :=   {_0 W}^{1/2-1/(2r)}_r(0,T;L_r(\Sigma)) \cap L_r(0;T;W^{1-1/r}_r(\Sigma)) ,\\
\mathbb F_4(T) :=  {_0 W}^{1/2-1/(2r)}_r(0,T;L_r(\Sigma)) \cap L_r(0;T;W^{1-1/r}_r(\Sigma))  ,\\
\mathbb F_5(T) :=  {_0 W}^{1-1/(2r)}_r(0,T;L_r(\Sigma)) \cap L_r( 0,T; W^{2-1/r}_r(\Sigma))   ,\\
\mathbb F_6(T) :=  L_p(0,T;W^{1-1/q}_q(\Sigma))   ,\\
\mathbb F_7(T) :=  {_0 F}^{1-2/(3q)}_{pq}(0,T; L_q(\partial\Sigma)) \cap L_p(0,T; W^{3-2/q}_q(\partial\Sigma))   ,\\
\mathbb F_8(T) :=  L_p(0,T;L_q(\Omega))  ,\quad
\mathbb F_9(T) :=    L_p(0,T;W^{2-1/q}_q(\Sigma)) ,\\
\mathbb F_{10}(T) :=   L_p(0,T; W^{1-1/q}_q(\partial\Omega)) ,\\
\mathbb F_{11}(T) :=  {_0 W}^{1/2-1/(2r)}_r(0,T;L_r(S_1)) \cap L_r(0;T;W^{1-1/r}_r(S_1)  ,\\
\mathbb F_{12}(T) :=   {_0 W}^{1-1/(2r)}_r(0,T;L_r(S_1)) \cap L_r( 0,T; W^{2-1/r}_r(S_1)) ,\\
\mathbb F_{13}(T) :=  {_0 W}^{1-1/(2r)}_r(0,T;L_r(S_2)) \cap L_r( 0,T; W^{2-1/r}_r(S_2)) .
\end{gather*}
Let
\begin{equation} \label{dsddddcdcdcfdsfsdfsdfsdf}
_ 0 \mathbb F(T) := \times_{j=1}^{13} \mathbb F_{j}(T) \cap \{ (g_2,g_5,g_{12},g_{13}) \in W^1_r(\R_+; \hat H^{-1}_r(\Omega)) \}.
\end{equation}
Define a linear operator by the left hand side of \eqref{930487547650487gg5}, that is, define $\mathsf L : {_ 0 \mathbb E(T)} \pfeil {_0 \mathbb F(T)}$ via
\begin{equation*}
\mathsf L (u,\pi,q,h,\mu) := \begin{pmatrix}
\rho^\pm \partial_t u - \mu^\pm \Delta u + \nabla \pi \\
\div u \\
- \ljump \mu^\pm \partial_3 (u_1,u_2) \rjump - \ljump \mu^\pm \nabla_{x'} u_3 \rjump \\
-2 \ljump \mu^\pm \partial_3 u_3 \rjump + q - \sigma \Delta_{x'} h \\
\ljump u \rjump \\
\partial_t h - u_3|_\Sigma + \ljump \partial_3 \mu \rjump \\
(-\nabla_{x'} h, 1)^\top|_{\partial\Sigma} \cdot \nu_{S_1} \\
\Delta \mu \\
\mu|_\Sigma - \sigma \Delta_{x'} h \\
\nu_{\partial\Omega} \cdot \nabla \mu|_{\partial\Omega} \\
P_{S_1} \left( \mu^\pm (Du + Du^\top) \nu_{S_1} \right) \\
u|_{S_1} \cdot \nu_{S_1} \\
u|_{S_2}
\end{pmatrix}.
\end{equation*}
We collect the right hand side in the operator $\mathsf R : \mathbb E(T) \pfeil \mathbb F(T)$ defined by
\begin{equation*}
\mathsf R (u,\pi,q,h,\mu) := \begin{pmatrix}
a^\pm(h;D_x^2)(u,\pi) + \bar a(h,u) \\
G_d(u,h) \\
G_S(u,\pi,h)_{1,2} \\
G_S(u,\pi,h)_3 \\
0 \\
G_\Sigma(u,h,\mu) \\
0  \\
G_c(h,\mu) \\
G_\kappa(h) \\
G_N(h,\mu) \\
G^\pm_P(u,h) \\
0 \\
0
\end{pmatrix}.
\end{equation*}
Hereby $\mathbb E (T)$ and $\mathbb F(T)$ are defined similarly but without the trace properties at $t = 0$.
%For a definition of $a^\pm,\bar a, G_d,G_S,G_\Sigma, G_c,G_\kappa,G_N$ and $G_P^\pm$ we refer to \eqref{78236496723543643}-\eqref{78236496723543643e}.

It is now clear that for $h \in {_0 \mathbb E(T)}$ (which is a function having vanishing time trace) the compatibility condition $(-\nabla_{x'} h(t=0), 1)^\top|_{\partial\Sigma} \cdot \nu_{S_1} = 0$ is satisfied. Regarding the compatibility conditions for the Stokes system we refer to Section 3.1 in \cite{wilkehabil}. Therefore both operators are well defined.

Let $z := (u,\pi,q,h,\mu)$ and $z_* := (u_*,\pi_*,\ljump \pi_*\rjump,h_*,\mu_*)$ the reference solution as above. We can now rewrite the problem abstractly as
\begin{equation*}
\mathsf L(z+z_*) = \mathsf{R}(z+z_*), \quad z \in {_0 \mathbb E (T)}.
\end{equation*}
Note that we already know that $\mathsf L$ is invertible from $_0 \mathbb E(T)$ to $_0 \mathbb F(T)$ and the norms are independent of $T$. This renders the fixed point equation
\begin{equation*}
z = \mathsf L^{-1} (\mathsf{R}(z+z_*)  - \mathsf Lz_* ), \quad \text{ in } {_0 \mathbb E(T)}.
\end{equation*}
Define $\mathsf K : {_0 \mathbb E(T)} \pfeil  {_0 \mathbb E(T)}$ by means of $[ z \mapsto  \mathsf L^{-1} (\mathsf{R}(z+z_*)  - \mathsf Lz_* )]$.
% Note at this point that by construction $\mathsf{R}(z+z_*)  - \mathsf Lz_* $ and $ \mathsf{R}(z_1+z_*)-\mathsf{R}(z_2+z_*)$ both have vanishing traces at $t=0$ for all $z,z_1,z_2 \in {_0 \mathbb E(T)}.$
We now need to establish contraction estimates for $\mathsf R$. 
\begin{lemma} \label{23098476384756} We have
\begin{equation}
\begin{alignedat}{1}
|  \mathsf{R}(z_1+z_*)&-\mathsf{R}(z_2+z_*) |_{ _0 \mathbb F(T)} \\ &\leq C( T^\alpha+ |z_*|_{ \mathbb E(T)} + |z_1|_{_0 \mathbb E(T)} + |z_2|_{_0 \mathbb E(T)} ) | z_1 - z_2 |_{_0 \mathbb E(T)}, 
\end{alignedat}
\end{equation}
for some $\alpha > 0$ and for all $z_1,z_2 \in \mathsf B(r,0) \subset {_0 \mathbb E(T)}$, if $r > 0$ and $T = T(r)>0$ are sufficiently small.
\end{lemma}
Having these estimates at hand we proceed as in the proof of Theorem 5.1 in \cite{mulsekpaper12} to obtain a fixed point of $\mathsf K$ by Banach's contraction mapping principle by choosing $\varepsilon (T) >0$ in \eqref{98236736408763046ffdfg} small enough. This finishes the proof.
\end{proof}
\begin{proof}[Proof of Lemma \ref{23098476384756}.] 
 Let us first note that
\begin{gather}
[h \mapsto \Delta_h ] \in C^1(U; \mathcal B(W^2_r(\Omega \backslash \Sigma) ; L_r(\Omega) ) ), \label{8888882222}\\
[h \mapsto \nabla_h ] \in C^1(U; \mathcal B(W^k_r(\Omega \backslash \Sigma) ; W^{k-1}_r(\Omega \backslash \Sigma) ) ), \quad k = 1,2, \label{8888882222b}
\end{gather}
where $U \subset B^{4-1/q-3/p}_{qp}(\Sigma)$ is a sufficiently small neighbourhood of zero. This can be shown as in Lemma 3.4 in \cite{mulsekpaper12}.
 %Note in particular that since $h$ has vanishing trace at $t=0$, we have that $(\Delta_h - \Delta)|_{t=0} = 0$, the same for $\nabla_h - \nabla$.

We estimate every nonlinearity separately.
We recall that $a^\pm(h;D_x)(u,\pi) = \mu^\pm (\Delta_h - \Delta)u - (\nabla - \nabla_h)\pi$. Clearly,
\begin{equation*}
| ( \Delta_h - \Delta ) u |_{L_r(0,T;L_r(\Omega))} \leq |\Delta_h - \Delta|_{L_\infty(0,T; \mathcal B(W^2_r(\Omega\backslash \Sigma);L_r(\Omega)))} | u|_{L_r(0,T;W^2_r(\Omega \backslash \Sigma))}.
\end{equation*}
Using \eqref{8888882222} this gives
\begin{equation*}
| ( \Delta_h - \Delta ) u |_{L_r(0,T;L_r(\Omega))} \leq C|h|_{_0 \mathbb E(T)} |u|_{L_r(0,T;W^2_r(\Omega \backslash \Sigma))}.
\end{equation*}
The same arguments give
\begin{equation*}
| (\nabla_h - \nabla) \pi |_{L_r(0,T;L_r(\Omega))} \leq C|h|_{_0 \mathbb E(T)} | \pi|_{L_r(0,T;\dot H^1_r(\Omega\backslash \Sigma))},
\end{equation*}
since \eqref{8888882222b} is also true for the homogeneous counterparts $\dot H^k_r$ replacing $W^k_r$. Note that these estimates and the $C^1$-dependence of $h$ and the bilinear structure in $(u,\pi)$ of $a^\pm(h)(u,\pi)$ then automatically give rise to a contraction estimate of form
\begin{align} \label{049875634075}
| &a^\pm(h_1)(u_1,\pi_1) - a^\pm(h_2)(u_2,\pi_2) |_{L_r(0,T;L_r(\Omega))} \leq \\ &\leq C |h_1-h_2|_{_0 \mathbb E(T)} \left( |u_1 - u_2|_{L_r(0,T;W^2_r(\Omega \backslash \Sigma))} + | \pi_1-\pi_2|_{L_r(0,T;\dot H^1_r(\Omega\backslash \Sigma))} \right), \nonumber
\end{align}
valid for all $h_1,h_2 \in U, u_1,u_2 \in L_r(0,T;W^2_r(\Omega \backslash \Sigma)), \pi_1, \pi_2 \in L_r(0,T;\dot H^1_r(\Omega\backslash \Sigma))$. Indeed, we have that
$
a^\pm \in C^2 \left( \mathbb E_h(T)  \times \mathbb E_u(T) \times \mathbb E_\pi (T) ; L_r(0,T;L_r(\Omega))  \right)
$
and 
$
a^\pm(0) =0,$ $Da^\pm (0) = 0$.
Alternatively, one can explicitly estimate the difference and end up with \eqref{049875634075}. Before we estimate $\bar a(u,h,\eta) := Du \cdot \partial_t \Theta^{-1}_h + (u \cdot \nabla_h)u + (\rho^+ - \rho^-)(\nabla_h \eta \cdot \nabla_h)u$, some remarks are in order.
Firstly, $Du \cdot \partial_t \Theta_h^{-1} = -\chi \partial_t h(1+h\chi')^{-1} \partial_3 u$, see \cite{pruessbuch}.

Contracting the (transformed) convection term $(u \cdot \nabla_h ) u$ is easy due to the fact that $\mathbb E_u(T) \into L_\infty(0,T;L_\infty(\Omega)) \cap L_\infty(0,T;W^1_r(\Omega\backslash \Sigma))$. More precisely,
\begin{align*}
| (u \cdot \nabla_h) &u |_{L_r(0,T;L_r(\Omega))} \leq \\ & T^{1/r} |u|_{L_\infty(0,T;L_\infty(\Omega))} | \nabla_h|_{L_\infty(0,T;\mathcal B(W^1_r(\Omega \backslash \Sigma) ; L_r(\Omega)))} | u |_{L_\infty(0,T;W^1_r(\Omega \backslash  \Sigma))}.
\end{align*}
Regrading the other terms we recall that $Du \in L_{2r}(0,T;L_\infty(\Omega))$ due to $r>3$. We then get by H\"older inequality that
\begin{equation*} \begin{alignedat}{1}
| Du &\cdot \partial_t \Theta_h^{-1} |_{L_r(0,T;L_r(\Omega))} \leq \\ & | \chi (1+ h \chi')^{-1} |_{L_\infty(0,T;L_\infty(\Omega))} | Du |_{L_{p_1}(0,T;L_\infty(\Omega))} | \partial_t h |_{L_p(0,T;L_{r}(\Sigma))} |1|_{L_{p_0}(0,T;L_\infty(\Omega))}, \end{alignedat}
\end{equation*}
where $1 < p_0, p_1 < \infty$ are such that
\begin{equation} \label{34876504365943}
\frac{1}{r} = \frac{1}{p_1} +\frac{1}{p} +\frac{1}{p_0}.
\end{equation}
By choice of $q < 2$ and $3 < r < 7/2$, Sobolev's embedding theorem gives $$W^{1-1/q}_q(\Sigma) \into L_r(\Sigma).$$ Choosing $p_1 = 2r$ and recalling $p > 2r$ gives that there is some $1 < p_0 < \infty$ such that \eqref{34876504365943} is fulfilled. Note that these estimates are not optimal but sufficient in our case. We then obtain that there is some $\varepsilon = \varepsilon(p,q,r) > 0$ such that
\begin{equation*}
| Du \cdot \partial_t \Theta_h^{-1} |_{L_r(0,T;L_r(\Omega))} \leq CT^\varepsilon | u |_{\mathbb E_u(T)} | h |_{\mathbb E_h(T)}.
\end{equation*}
Furthermore,
\begin{equation*}
| (\nabla_h \eta \cdot \nabla_h) u |_{L_r(0,T;L_r(\Omega))} \leq | \nabla_h \eta |_{L_p(0,T;L_r(\Omega))} | \nabla_h u |_{L_{p_1}(0,T;L_\infty(\Omega))} |1|_{L_{p_0}(0,T;L_\infty(\Omega))},
\end{equation*}
where $p_0, p_1$ are as above. Again by Sobolev embedding, $W^1_q(\Omega \backslash \Sigma) \into L_r(\Omega)$, whence
\begin{equation*}
| (\nabla_h \eta \cdot \nabla_h) u |_{L_r(0,T;L_r(\Omega))} \leq | \nabla_h \eta |_{L_p(0,T;W^1_q(\Omega\backslash\Sigma))} | \nabla_h u |_{L_{p_1}(0,T;L_\infty(\Omega))} |1|_{L_{p_0}(0,T;L_\infty(\Omega))}.
\end{equation*}
In view of \eqref{8888882222b}, these estimates together with the smooth dependence $$\bar a \in C^\infty( \mathbb E_u(T) \times \mathbb E_h(T) ; \mathbb F_1(T) )$$ as well as $\bar a(0,0,0) = 0$ and $D\bar a(0,0,0) = 0$ give rise to contraction estimates for $\bar a$.

For $G_d (u,h) := (\div - \div_h)u$, the estimate in $\mathbb F_2(T)$ is straightforward, 
\begin{equation*}
| G_d(u,h)|_{L_r(0,T;W^1_r(\Omega \backslash \Sigma))} \leq | \nabla - \nabla_h|_{L_\infty(0,T;\mathcal B(W^1_r(\Omega \backslash \Sigma) ; L_r(\Omega)))} | u|_{L_r(0,T;W^2_r(\Omega \backslash \Sigma))},
\end{equation*}
where we used that $G_d(u,h) = \operatorname{Tr} (\nabla - \nabla_h) u$.

The contraction estimates for $G_c(h,\eta) := (\Delta - \Delta_h)\eta = (\div \nabla - \div_h \nabla_h)\eta$ and $G_N(h,\eta) := \nu_{\partial\Omega} \cdot (\nabla-\nabla_h)\eta$ easily stem from \eqref{8888882222}-\eqref{8888882222b}  with $q$ replacing $r$, see also \cite{mulsekpaper12}. Note that there the contraction estimates for $G_\kappa (h) := \sigma( K(h) - \Delta_{x'} h)$ are already proven in a far more general setting. In this graph situation case we can give a much easier proof. Recall that in this case $K(h) = \div_{x'}( \nabla_{x'}  h (1+|\nabla_{x'} h|^2 )^{-1/2} )$, whence
\begin{equation} \label{2934857304985}
G_\kappa(h) = \left( 1- \frac{1}{\sqrt{  1+|\nabla_{x'} h|^2} } \right)\Delta_{x'} h + \nabla_{x'} h \cdot \nabla_{x'}\left( \frac{1}{ \sqrt{ 1 + |\nabla_{x'} h|^2 }  } \right).
\end{equation}
Again using the product estimate
\begin{equation*}
| \nabla h \cdot \nabla^2 h|_{ \mathbb F_9(T)} \leq C | \nabla h |_{L_\infty(0,T;B^{3-1/q-3/p}_{qp}(\Sigma))} |\nabla^2 h|_{\mathbb F_9(T)} \leq C |h|^2_{ _0 \mathbb E_h(T)}
\end{equation*}
and the fact that $G_\kappa \in C^\infty( {_0 \mathbb E_h(T)} ; \mathbb F_9(T) )$, $G_\kappa(0) = 0, DG_\kappa(0) = 0$
ensure the contraction property of $G_\kappa$.

Regarding $G^\pm_P(u,h)$ it is shown in Section 3.1 in \cite{wilkehabil}, that
\begin{equation} \label{3940857034765380475643}
\begin{alignedat}{1}
G^\pm_P&(u,h) \\ &= P_{S_1} \left[  \frac{1}{1+\chi' h} \left( \chi \partial_3 u ((-\nabla_{x'} h,1)^\top \cdot \nu_{S_1}) + \begin{pmatrix}
\chi \nabla_{x'} h \\ \chi' h
\end{pmatrix}  \partial_3 ( u \cdot \nu_{S_1} ) \right)   \right].
\end{alignedat}
\end{equation}
Therefore, due to the fact that $u \cdot \nu_{S_1} = 0 $ on $S_1 \backslash \partial\Sigma$ and $(-\nabla_{x'} h,1)^\top \cdot \nu_{S_1} = 0$ on $\partial\Sigma \times (L_1,L_2)$, the nonlinearity $G^\pm_P(u,h)$ vanishes for the solution. Hence we may replace $G_P^\pm (u,h)$ by zero in the definition of $\mathsf R$.

Now, for $G_S(h,u,\pi) $ we split $ G_S(h,u,\pi) = G_S^S(h,u,\pi) + G_S^\kappa(h)$, where
\begin{align*}
G_S^S(h,u,\pi) &:= 
\ljump \mu^\pm \left( (D\Theta_h - I)Du + Du^\top (D\Theta_h - I)^\top ) \right) \rjump \nu_{\Sigma_h} +    \\
&+ \ljump \left( \mu^\pm (Du+Du^\top) - \pi I \right) (e_3 - \nu_{\Sigma_h} ) \rjump, \\
G_S^\kappa(h) &:= \sigma( K(h) \nu_{\Sigma_h} - \Delta_{x'} h e_3 ).
\end{align*}
Regarding the estimates of $G_S^S(h,u,\pi)$ we refer to \cite{prsimNSST}. Note that due to Remark 1.2. (c) in \cite{prsimNSST} we may use these results since $r>3$ and $ \mathbb E_h(T) \into BUC([0,T];C^2(\Sigma))$.

Considering $G_S^\kappa(h)$ we may write $G_S^\kappa(h) = G_\kappa(h)e_3 + K(h)(\nu_{\Sigma_h} - e_3)$ and estimate each term separately. In particular, we have to control terms of the form $\nabla h \cdot \nabla^2 h$ in the norm of $\mathbb F_3(T) = {_0 W}^{1/2-1/(2r)}_r(0,T;L_r(\Sigma)) \cap L_r(0,T; W^{1-1/r}_r(\Sigma))$. This stems from the observation in \eqref{2934857304985}. Now, by Theorem \ref{thmr} we already know that the space in which second derivatives of $h$ live in embeds into $\mathbb F_3(T)$. 
We may now use the product estimate of Proposition 5.7 in \cite{meyriesveraarpointwise} to obtain
\begin{align*}
| \nabla h \cdot \nabla^2 h |_{W^{1/2-1/(2r)}_r(0,T;L_r(\Sigma))} &\lesssim | \nabla h|_{L_\infty(0,T;L_\infty(\Sigma))} | \nabla^2 h |_{W^{1/2-1/(2r)}_r(0,T;L_r(\Sigma))} + \\
&+ | \nabla h |_{W^{1/2-1/(2r)}_r(0,T;L_\infty(\Sigma))} | \nabla^2 h |_{L_\infty(0,T;L_r(\Sigma))}.
\end{align*}
Furthermore,
\begin{equation*}
| \nabla h \cdot \nabla^2 h |_{L_r(0,T;W^{1-1/r}_r(\Sigma))} \lesssim | \nabla h |_{L_\infty(0,T;C^1(\Sigma))} | \nabla^2 h |_{L_r(0,T;W^{1-1/r}_r(\Sigma))}.
\end{equation*}
These estimates show that the product terms of form $\nabla h \cdot \nabla^2 h$ are well defined in $\mathbb F_3(T)$. 

These observations allow us to conclude contraction estimates for $G_S^\kappa$ since again $G_S^\kappa (0) = 0, DG_S^\kappa (0) = 0$.

Regarding $G_\Sigma(u,h,\mu) = u|_\Sigma \cdot (-\nabla_{x'} h,0)^\top - \ljump e_3\cdot (\nabla - \nabla_h) \mu \rjump - \ljump (-\nabla_{x'} h,0)^\top \cdot \nabla_h \mu \rjump,$ the last two terms can be controlled as before. Clearly the first term is smooth in $(u,h)$ and quadratic and the bound
\begin{equation*}
|  u|_\Sigma \cdot (-\nabla_{x'} h,0)^\top |_{L_p(0,T;W^{1-1/q}_q(\Sigma))} \leq T^{1/p} |u|_{L_\infty(0,T;W^1_q(\Omega\backslash\Sigma))} | \nabla h|_{L_\infty(0,T;C^1(\Sigma))}
\end{equation*}
renders contraction estimates also for $G_\Sigma$. This concludes the proof of the contraction estimates.
\end{proof}

\section{Qualitative behaviour}

In this section we investigate the long-time behaviour of solutions starting close to equilibria. By a study of the spectrum of the linearization we will show that solutions starting close to certain equilibria converge to an equilibrium solution at an exponential rate.

% Since, among other things, the Hanzawa transformation onto a fixed domain of the free boundary problem destroys the divergence-free condition of the velocity field we will have to split the solution into two parts: the one which is divergence free and the part which is not. We follow the lines of [AbelsWilke] and [WilkeHabil].

Let us again consider the case of a cylindrical container $\Omega = \Sigma \times (L_1,L_2)$, where $ -\infty < L_1 < 0 < L_2 <\infty$ and $\Sigma \subset \R^2$ is open, bounded and has smooth boundary.
We want to study stability properties of 
\begin{equation} \label{9348576dfgdfgkkkkkrtzkrtz9765}
\begin{alignedat}{2}
\rho \partial_t  u  - \mu \Delta u + \div[ (\rho u + \ljump \rho \rjump \nabla \eta ) \otimes u ] + \nabla p  &= 0, &&\text{in } \Omega \backslash \Gamma(t), \\
\operatorname{div} u &= 0, &&\text{in } \Omega \backslash \Gamma(t), \\
- \ljump \mu (Du + Du^\top) \rjump \nu_{\Gamma (t)} + \ljump p \rjump \nu_{\Gamma (t)} &= \sigma H_{\Gamma(t)} \nu_{\Gamma(t)}, && \text{on } \Gamma(t), \\
\ljump u \rjump &= 0, && \text{on } \Gamma(t), \\
 V_{\Gamma(t)} -u|_{\Gamma(t)} \cdot \nu_{\Gamma(t)} &= - \ljump \nu_{\Gamma(t)} \cdot \nabla \eta \rjump, \quad && \text{on } \Gamma(t), \\
\nu_{\Gamma(t)} \cdot \nu_{S_1} &= 0, && \text {on } \partial \Gamma (t), \\
\Delta \eta &= 0, &&\text {in } \Omega \backslash \Gamma(t), \\
\eta|_{\Gamma(t)} &= \sigma H_{\Gamma(t)} , && \text {on } \Gamma(t), \\
\nu_{\partial\Omega} \cdot \nabla \eta|_{\partial\Omega} &= 0 , && \text{on } \partial\Omega \backslash\Gamma(t), \\
P_{S_1} \left( \mu (Du + Du^\top) \nu_{S_1} \right) &= 0, && \text{on } S_1 \backslash \partial\Gamma(t), \\
u \cdot \nu_{S_1} &= 0, && \text {on } S_1 \backslash \partial \Gamma(t), \\
u &= 0, && \text{on } S_2, \\
u(0) &= u_0, && \text{on } \Omega \backslash \Gamma(0), \\
\Gamma (0) &= \Gamma_0.
\end{alignedat}
\end{equation}
We recall that $\rho := \rho^+ \chi_{\Omega^+ (t)} + \rho^- \chi_{\Omega^-(t)}$ and $\mu := \mu^+ \chi_{\Omega^+ (t)} + \mu^- \chi_{\Omega^-(t)}$.

\subsection{Equilibria and spectrum of the linearization}
We note that the pressure $p$ as well as the chemical potential $\mu$ may be reconstructed by the semiflow $(u(t),\Gamma(t))$ as follows. For given $\Gamma(t)$ we can solve the two-phase elliptic problem
\begin{align*}
\Delta \eta &= 0, && \text{in } \Omega \backslash \Gamma(t), \\
\eta|_{\Gamma(t)} &= \sigma H_{\Gamma(t)}, && \text{on } \Gamma(t), \\
n_{\partial\Omega} \cdot \nabla \eta|_{\partial\Omega} &= 0, &&\text{on } \partial\Omega,
\end{align*}
and the weak transmission problem
\begin{align*}
(\nabla p / \rho | \nabla \phi)_{L_2(\Omega)} &= ([\mu/\rho] \Delta u - u \cdot \nabla u | \nabla \phi )_{L_2(\Omega)}, \quad && \text{for all } \phi \in W^1_{r'}(\Omega), \\
\ljump p \rjump &=  \ljump \mu (Du+Du^\top) \nu_{\Gamma(t)} \cdot \nu_{\Gamma(t)} \rjump + \sigma H_{\Gamma(t)}, && \text{on } \Gamma(t),
\end{align*}
where $r' = r/(r-1)$, cf. Lemma A.7 in \cite{wilkehabil}.
%Here we want to point out that $(\partial_t u | \nabla \phi)_2 = d/dt (u | \nabla \phi)_2 = d/dt ( \div u | \phi )_2 = 0$, due to the boundary conditions.
Therefore we may concentrate on the set of equilibria $\mathcal E$ for the semiflow $(u(t), \Gamma(t))$.
Note that
the set of equilibria for \eqref{9348576dfgdfgkkkkkrtzkrtz9765} is given by $\mathcal E = \{ (u,\Gamma) : u = 0, H_\Gamma = \text{const.}\}$. In particular, also $\mu$ is constant, $p$ is constant in the two phases of $\Omega \backslash \Gamma$ and also the jump $\ljump p \rjump$ is constant on $\Gamma$.
%\begin{proof}
%Say we have a stationary solution $(u,\Gamma)$.
%Testing the equations and invoking the transmission and boundary conditions gives
%\begin{equation}
%| (\mu^\pm)^{1/2} (Du + Du^\top ) |_{L_2(\Omega)}^2 +  | \nabla \mu |^2_{L_2(\Omega)} = 0.
%\end{equation} 
%Hence $u = 0$ by Korn's inequality and $\mu$ is constant. Consequently, $p$ is constant with possibly different values in the phases and $H_\Gamma$ is constant. 
%We want to quickly explain why the boundary terms of $Du$ vanish in \eqref{92376982476}. The boundary integrals are of form
%\begin{equation} \label{3948056304754386508473}
%\int_{\partial\Omega} \mu^\pm (Du + Du^\top ) \nu_{\partial\Omega} \cdot u|_{\partial\Omega} d\partial\Omega.
%\end{equation}
%By equation \eqref{903478650etrhj83476}, the integral vanishes on $S_2$. Regarding $S_1$ we note that by applying the projection $P_{S_1} = I - \nu_{S_1} \otimes \nu_{S_1}$,
%\begin{equation*}
%\mu^\pm (Du + Du^\top) \nu_{S_1} \cdot u |_{S_1} = P_{S_1}(\mu^\pm (Du+Du^\top) \nu_{S_1} ) u|_{S_1} + [ \nu_{S_1} \otimes \nu_{S_1} ] ( \mu^\pm (Du+ Du^\top) \nu_{S_1} ) u|_{S_1}.
%\end{equation*}
%The first term vanishes on $S_1$ due to the equations, for the second one we directly calculate
%\begin{equation*}
% [ \nu_{S_1} \otimes \nu_{S_1} ] ( \mu^\pm (Du+ Du^\top) \nu_{S_1} ) u|_{S_1} = (\nu_{S_1} \cdot u|_{S_1} ) [ (\nu_{S_1} \otimes \nu_{S_1}) : \mu^\pm (Du+Du^\top) ], 
%\end{equation*}
%whence this also vanishes on $S_1$ and renders the integral \eqref{3948056304754386508473} to be zero. This concludes the proof.
%\end{proof}
\begin{remark}
We want to point out that in the special case when $\Gamma$ is a $C^2$-graph of a function $h$ over $\Sigma$, we can even deduce that $H_\Gamma = 0$ and $h$ is constant. A proof of this can be found in \cite{mulsekpaper12}.

%Let us give a proof of this. We already know that $H_\Gamma$ is constant. By a shifting argument, we may assume that $h$ is mean value free on $\Sigma$. Testing $H_\Gamma$ with $h$ in $L_2(\Sigma)$ gives
%\begin{equation}
%0 = (H_\Gamma, h)_{L_2(\Sigma)} = - \int_\Sigma \frac{ | \nabla h |^2} { \sqrt{1 + | \nabla h |^2} } dx.
%\end{equation}
%Hence $h$ is constant and $H_\Gamma = 0$.
\end{remark}
We again now work in the graph situation, that is, we assume the free interface $\Gamma(t)$ is a graph of a height function $h$ over $\Sigma$.

The linearization of the transformed Two-phase Navier-Stokes/Mullins-Sekerka problem \eqref{9348576dfgdfgkkkkkrtzkrtz9765} around the trivial equilibrium $(0, \Sigma) \in \mathcal E$ induces us to study the problem
\begin{equation}\label{03947fghsfghsrh534}
\begin{alignedat}{2} 
\rho \partial_t u - \mu \Delta u  + \nabla p &= f_u, &&\text{in } \Omega \backslash \Sigma, \\
\operatorname{div} u &= 0, &&\text{in } \Omega \backslash \Sigma, \\
 - \ljump \mu (Du + Du^\top) \rjump e_3 + \ljump p \rjump e_3 + \sigma \Delta_{x'} h e_3 &=  0, && \text{on } \Sigma,  \\
\ljump u \rjump &= 0, && \text{on } \Sigma, \\
  \partial_t h - u_3 + \ljump \partial_3 \eta \rjump &=  f_h , && \text{on } \Sigma, \\
(\nabla_{x'} h, -1)^\top \cdot \nu_{S_1} &= 0 && \text {on } \partial \Sigma, \\
\Delta \eta &= 0, &&\text {in } \Omega \backslash \Sigma, \\
\eta|_{\Sigma} + \sigma \Delta_{x'} h &= 0, && \text {on } \Sigma, \\
\nu_{\partial\Omega} \cdot \nabla \mu|_{\partial\Omega} &= 0, && \text{on } \partial\Omega \backslash\Sigma, \\
P_{S_1} \left( \mu (Du + Du^\top) \nu_{S_1} \right) &= 0, && \text{on } S_1 \backslash \partial\Sigma, \\
u \cdot \nu_{S_1} &= 0, \qquad\qquad\qquad&& \text {on } S_1 \backslash \partial \Sigma, \\
u &= 0, && \text{on } S_2, \\
u(0) &= u_0, && \text{in } \Omega \backslash \Sigma, \\
h(0) &= h_0, &&\text{on } \Sigma, 
\end{alignedat}
\end{equation}
where $f_h$ is assumed to be mean value free.
Let us note the following observations. Integrating equation $\eqref{03947fghsfghsrh534}_5$ over $\Sigma$ yields 
$
\int_\Sigma h(t) dx = \int_\Sigma h_0 dx$ for all $t \in \R_+$.
In other words, whenever $h_0$ and $f_h$ are mean value free, the solution $h$ will stay mean value free for all times.
Furthermore, applying $P_\Sigma = I - e_3 \otimes e_3$ to equation $\eqref{03947fghsfghsrh534}_3$ directly yields that $P_\Sigma ( \ljump \mu^\pm (Du + Du^\top) \rjump e_3 ) = 0$ on $\Sigma$.

 We want to write system \eqref{03947fghsfghsrh534} as an abstract evolution equation. To this end let
\begin{equation*}
X_0 := L_{r,\sigma} (\Omega) \times W^{1-1/q}_q(\Sigma), \quad X_1 := (L_{r,\sigma}(\Omega) \cap W^2_r(\Omega \backslash \Sigma) ) \times W^{4-1/q}_q(\Sigma),
\end{equation*}
and define a linear operator $A : D(A) \subset X_1 \pfeil X_0$ by
\begin{equation*}
A(u,h) := (- [\mu/\rho] \Delta u + \nabla p , - u_3 + \ljump \partial_3 \eta \rjump )
\end{equation*}
with domain
\begin{align*}
D(A) := \{ (u,h) &\in X_1 :  \ljump u \rjump = 0 \text{ on } \Sigma, \;
P_{S_1} \left( \mu^\pm (Du + Du^\top) \nu_{S_1} \right) = 0 \text{ on } S_1\backslash \partial\Sigma, \\
&u \cdot \nu_{S_1} = 0 \text{ on } S_1 \backslash \partial\Sigma, \; u = 0 \text{ on } S_2, \\
&P_\Sigma ( \ljump \mu^\pm (Du + Du^\top) \rjump e_3 ) = 0 \text{ on } \Sigma, \;
(\nabla_{x'} h, -1)^\top \cdot \nu_{S_1} = 0 \text{ on } S_1 \}.
\end{align*}
Here, $p \in \dot H^1_r (\Omega \backslash \Sigma)$ solves the weak transmission problem
\begin{subequations}
\begin{align*}
(\nabla p /\rho | \nabla \phi)_{L_2(\Omega)} &= ([\mu/\rho] \Delta u  | \nabla \phi )_{L_2(\Omega)}, \quad && \text{for all } \phi \in W^1_{r'}(\Omega), \\
\ljump p \rjump &= \sigma \Delta_{x'} h + ( \ljump \mu^\pm (Du+Du^\top)  \rjump e_3 | e_3 )_{L_2(\Sigma )}, && \text{on } \Sigma,
\end{align*}
\end{subequations}
cf. Lemma A.7 in \cite{wilkehabil} and $\eta \in W^2_q(\Omega \backslash \Sigma)$ solves the elliptic problem
\begin{align*}
\Delta \eta &=0, && \text{in } \Omega \backslash \Sigma, \\
\eta|_\Sigma + \sigma \Delta_{x'} h &= 0, && \text{on } \Sigma, \\
\partial_\nu \eta &= 0, &&\text{on } \partial\Omega.
\end{align*} 
As in \cite{abelswilke} and \cite{wilkehabil}, we will sometimes make use of the notation via solution operators, that is,
\begin{equation*}
\nabla p / \rho= T_1[(\mu/\rho) \Delta u ] + T_2[ \sigma \Delta_{x'} h + ( \ljump \mu (Du+Du^\top)  \rjump e_3 | e_3 )_{L_2(\Sigma )}   ],
\end{equation*}
%where $T_1 : L_r(\Omega) \pfeil L_r(\Omega)$ and $T_2 : W^{1-1/r}_r(\Sigma) \pfeil L_r(\Omega)$ are bounded, linear operators, 
cf. Lemma A.7 in \cite{wilkehabil}. Note that $\Delta_{x'} h \in W^{2-1/q}_q(\Sigma) \into W^{1-1/r}_r(\Sigma)$ for $h \in W^{4-1/q}_q(\Sigma)$ by Sobolev embedding, since $r < 3q/(3-q)$.

We can then rewrite problem \eqref{03947fghsfghsrh534} in a more compact form as
\begin{equation} \label{8937046580354765}
\dot z(t) + Az(t) = f(t), \; \; t \in \R_+, \quad \; z(0) = z_0,
\end{equation}
where $z:= (u,h)$, $f := (f_u,f_h)$ and $z_0 := (u_0,h_0)$.
We can now show a similar result as in \cite{abelswilke} about properties of the operator $A$.
\begin{lemma} \label{39840567349576340576834065734}
Let $n = 2,3$, $(p,q,r)$ as in Theorem \ref{thmr}, $\rho^\pm$, $\mu^\pm, \sigma > 0$ constant and $X_0$ and $A$ as above. Then the following statements are true.
\begin{enumerate}  %\itemsep-0.3em
%\item For every $f \in L_r(J;L_r(\Omega)) \times L_p(J;W^{1-1/q}_q(\Sigma))$ and $z_0 \in W^{2-2/r}_{r,\sigma}(\Omega \backslash \Sigma) \times B^{4-1/q-3/p}_{qp}(\Sigma)$ there is a unique solution $z=(u,h)$ of \eqref{8937046580354765} with $u \in W^1_r(J;L_r(\Omega)) \cap L_r(J;W^2_r(\Omega \backslash \Sigma))$ and $h \in W^1_p(J;W^{1-1/q}_q(\Sigma)) \cap L_p(J;W^{4-1/q}_q(\Sigma))$.
\item The linear operator $-A$ generates an analytic $C_0$-semigroup $e^{-At}$ in $X_0$.
\item The spectrum $\sigma(-A)$ consists of countably many eigenvalues with finite algebraic multiplicity.
\item $\lambda = 0$ is a semi-simple eigenvalue with multiplicity $1$ and $X_0 = N(A) \oplus R(A)$.
\item $\sigma (-A)\backslash  \{ 0 \} \subset \mathbb C_- := \{ z \in \mathbb C : \operatorname{Re} z < 0 \}$.
\item The kernel $N(A)$ is isomorphic to the tangent space $T_{(0,\Sigma)} \mathcal E$ of $\mathcal E$ at the trivial equilibrium $(0,\Sigma) \in \mathcal E$ and is given by $N(A) = \{ (u,h) : u = 0, h = const. \}$.
\item The restriction of $e^{-At}$ to $R(A)$ is exponentially stable.
\end{enumerate}
\end{lemma}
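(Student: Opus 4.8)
The plan is to follow the line of \cite{abelswilke}, the essential new point being that the ninety degree contact condition is precisely what makes the boundary integrals over $\partial\Sigma$ disappear in the energy identity on which parts (3)--(6) rest. For part (1) I would realise the coupled linear problem \eqref{03947fghsfghsrh534} as the abstract equation \eqref{8937046580354765} and transfer maximal $L_p$-regularity from Theorem \ref{maxregodfpgudfphgu}: given $X_0$-valued right-hand sides one first solves the two-phase Stokes part for $u$ in dependence of $h$ in the $L_r$-scale, inserts $u(h)|_\Sigma$ into the evolution equation for $h$, and invokes Theorem \ref{thmr} to see that this coupling is of lower order, so the reduced equation for $h$ is a small perturbation of the pure Mullins--Sekerka problem of \cite{mulsekpaper12}. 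Maximal regularity of $A$ (after a shift) then yields that $-A$ generates an analytic $C_0$-semigroup, cf. \cite{pruessbuch}. For part (2) I would note that $X_1$ embeds compactly into $X_0$, since $W^2_r(\Omega\setminus\Sigma)$ embeds compactly into $L_r(\Omega)$ and $W^{4-1/q}_q(\Sigma)$ embeds compactly into $W^{1-1/q}_q(\Sigma)$ on the bounded sets $\Omega,\Sigma$; hence $A$ has compact resolvent and $\sigma(-A)$ consists of isolated eigenvalues of finite algebraic multiplicity which can accumulate only at infinity.

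The core is an energy identity. Given $\lambda\in\sigma(-A)$ with eigenfunction $(u,h)$ and the associated $p$, $\eta$, I would test the momentum equation $\lambda\rho u - \mu\Delta u + \nabla p = 0$ with $\bar u$ and integrate over $\Omega^+$ and $\Omega^-$ separately, writing the principal part as $-\operatorname{div}(\mu(Du+Du^\top)-pI)$ and using $\operatorname{div} u = 0$. The conditions $P_{S_1}(\mu(Du+Du^\top)\nu_{S_1})=0$, $u\cdot\nu_{S_1}=0$ on $S_1$ and $u=0$ on $S_2$ make the boundary integrals over $S_1$ and $S_2$ vanish; on $\Sigma$, $\ljump u\rjump = 0$ together with the stress jump condition turns the interface integral into a curvature term. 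Inserting the kinematic equation $u_3|_\Sigma = \lambda h + \ljump \partial_3\eta\rjump$, integrating the curvature term by parts over $\Sigma$ — here the ninety degree condition $\partial_{\nu_{\partial\Sigma}}h = 0$ annihilates the arising boundary term over $\partial\Sigma$, leaving $\int_\Sigma \Delta_{x'}h\,\bar h\,d\Sigma = -\|\nabla_{x'}h\|_{L_2(\Sigma)}^2$ — and finally using the interface relation between $\eta|_\Sigma$ and $\sigma\Delta_{x'}h$ together with $\Delta\eta=0$ and $\partial_\nu\eta|_{\partial\Omega}=0$ (so that $\int_\Sigma \eta|_\Sigma\ljump \partial_3\bar\eta\rjump = -\|\nabla\eta\|_{L_2(\Omega)}^2$), one arrives, up to fixed positive constants, at
\[
\operatorname{Re}\lambda\left(\|\rho^{1/2}u\|_{L_2(\Omega)}^2 + \sigma\|\nabla_{x'}h\|_{L_2(\Sigma)}^2\right) = -\,\|\mu^{1/2}\mathbb{D} u\|_{L_2(\Omega)}^2 - \|\nabla\eta\|_{L_2(\Omega)}^2 \le 0 ,
\]
i.e. $\operatorname{Re}\lambda$ times the linearized energy equals minus the linearized dissipation, mirroring the energy--dissipation relation of the full system; the signs in \eqref{03947fghsfghsrh534} are to be read so that $\sigma\|\nabla_{x'}h\|^2$ is the linearized surface area.

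From this identity parts (3)--(6) follow. Taking $\lambda=0$ forces $\mathbb{D} u\equiv 0$ and $\nabla\eta\equiv 0$; Korn's inequality together with $u=0$ on $S_2$ (which kills all infinitesimal rigid motions) gives $u=0$, while $\nabla\eta\equiv 0$ and continuity of $\eta$ across $\Sigma$ give $\eta\equiv\mathrm{const}$, hence $\sigma\Delta_{x'}h=\mathrm{const}$, and since $\int_\Sigma\Delta_{x'}h = \int_{\partial\Sigma}\partial_{\nu_{\partial\Sigma}}h = 0$ this constant is zero, so $h$ is constant; conversely $(0,\mathrm{const})\in N(A)$, which gives $N(A) = \{(0,c):c\in\R\}=T_{(0,\Sigma)}\mathcal E$ with $\dim N(A)=1$ — part (5). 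For semisimplicity, part (3), suppose $Aw=z$ with $z=(0,c)\in N(A)$, $w=(v,g)$; running the same test-function computation on $Aw=z$ (the only new term is $-c\,\sigma\int_\Sigma\Delta_{x'}g\,d\Sigma = -c\,\sigma\int_{\partial\Sigma}\partial_{\nu_{\partial\Sigma}}g = 0$, again by the contact condition in $D(A)$) yields $\mathbb{D} v\equiv 0$ and $\nabla\eta_g\equiv 0$, hence $w\in N(A)$ and $z=Aw=0$; thus $N(A)=N(A^2)$, $\lambda=0$ is semisimple, and being an isolated point of $\sigma(A)$ the Riesz projection yields the topological direct sum $X_0=N(A)\oplus R(A)$, with both summands $e^{-At}$-invariant. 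For part (4), the identity gives $\operatorname{Re}\lambda\le 0$ for all $\lambda\in\sigma(-A)$, and if $\operatorname{Re}\lambda=0$ then the right-hand side vanishes, so $u=0$, $\eta$ and $h$ constant, and the kinematic equation becomes $\lambda h = 0$, forcing $h=0$ when $\lambda\neq 0$, a contradiction; hence $\sigma(-A)\setminus\{0\}\subset\mathbb{C}_-$. Finally, part (6): $-A$ is sectorial, so $\sigma(-A)$ lies in a sector around $\R_-$ of half-angle $<\pi/2$, and together with the absence of finite accumulation points this shows $s_0:=\sup\{\operatorname{Re}\lambda : \lambda\in\sigma(-A)\setminus\{0\}\}<0$; the part of $-A$ in $R(A)$ generates an analytic semigroup whose spectrum lies in $\{\operatorname{Re} z\le s_0\}$, and since growth bound equals spectral bound for analytic semigroups, $e^{-At}|_{R(A)}$ decays exponentially.

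The main obstacle is the rigorous derivation of the energy identity in the mixed $L_r$/$L_p$ regularity classes of \eqref{927364873}: one has to justify the Gauss--Green formulas on the Lipschitz subdomains $\Omega^\pm$ and on $\Sigma$, account for every interface and boundary contribution, and check that the single remaining boundary term over $\partial\Sigma$ is exactly the one killed by the ninety degree contact angle. The softer parts (1)--(2) additionally require carrying out the reduction of \eqref{03947fghsfghsrh534} to the abstract form \eqref{8937046580354765} so that Theorem \ref{maxregodfpgudfphgu} and Theorem \ref{thmr} apply.
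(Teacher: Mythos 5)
Your proposal is correct and follows essentially the same route as the paper: generation of the analytic semigroup via maximal regularity (Theorem \ref{maxregodfpgudfphgu}), compact resolvent for the spectral structure, the energy identity obtained by testing the eigenvalue problem with $u$ (the ninety degree condition killing the $\partial\Sigma$ boundary terms), Korn's inequality to identify $N(A)=\{(0,\mathrm{const})\}$, $N(A^2)=N(A)$ for semisimplicity of $\lambda=0$, and the resulting spectral gap for exponential stability on $R(A)$. The only cosmetic difference is in the semisimplicity step, where the paper simply integrates the equation $-u_3+\ljump \partial_3\eta\rjump = h_1$ over $\Sigma$ and uses that both terms on the left are mean value free to conclude $h_1=0$, whereas you rerun the energy computation; both arguments work.
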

\begin{proof}
The first assertion follows from Theorem \ref{maxregodfpgudfphgu} and the proof of %Proposition 5.1 in \cite{abelswilke} and 
Proposition 1.2 in \cite{prmaxreglpspace}.
Since $D(A)$ compactly embeds into $X_0$, the resolvent of $A$ is compact and therefore the spectrum of $A$ consists only of countably many eigenvalues with finite multiplicity. By classical results, it does not depend on $q$ and $r$, cf. \cite{arendt}, \cite{engelnagel}. So let $\lambda \in \sigma(-A)$ be an eigenvalue with eigenfunctions $(u,h) \in D(A)$. The corresponding eigenvalue problem reads as 
\begin{equation} \label{03947fghsfghsrrtrh534}
\begin{alignedat}{2}
 \lambda \rho u - \mu \Delta u  + \nabla p &= 0, &\text{in } \Omega \backslash \Sigma,  \\
\Delta \eta = 0, \quad \operatorname{div} u &= 0, &\text{in } \Omega \backslash \Sigma,  \\
 - \ljump \mu (Du + Du^\top) \rjump e_3 + \ljump p \rjump e_3 - \sigma \Delta_{x'} h e_3 &=  0, & \text{on } \Sigma,  \\
  \lambda h - u_3 + \ljump \partial_3 \eta \rjump &=  0 , & \text{on } \Sigma, \\
(\nabla_{x'} h, -0)^\top \cdot \nu_{S_1} &= 0 & \text {on } \partial \Sigma,  \\
\ljump u \rjump = 0, \quad \ljump \eta \rjump = 0, \quad \eta|_{\Sigma} + \sigma \Delta_{x'} h &= 0, & \text {on } \Sigma, \\
\nu_{\partial\Omega} \cdot \nabla \mu|_{\partial\Omega} &= 0, & \text{on } \partial\Omega \backslash\Sigma, \\
u \cdot \nu_{S_1} = 0, \quad P_{S_1} \left( \mu (Du + Du^\top) \nu_{S_1} \right) &= 0, \quad\quad & \text{on } S_1 \backslash \partial\Sigma, \\
u &= 0, & \text{on } S_2. 
\end{alignedat}
\end{equation}
Testing equation $\eqref{03947fghsfghsrrtrh534}_1$ with $u$ in $L_2(\Omega)$ and invoking boundary and transmission conditions yields
\begin{equation} \label{9083746589374650387456}
\lambda |\rho^{1/2} u|_{L_2(\Omega)} + | \mu^{1/2} (Du+Du^\top) |_{L_2(\Omega)} + \sigma \bar\lambda | \nabla_{x'} h|_{L_2(\Sigma)}^2 + | \nabla \eta |_{L_2(\Omega)} = 0.
\end{equation}
Let $\lambda = 0$. Then $u = 0$ by Korn's inequality and $\eta = const.$, whence $\Delta_{x'} h$ is constant on $\Sigma$. An integration over $\Sigma$ together with the boundary condition $\eqref{03947fghsfghsrrtrh534}_6$ yields that
%\begin{equation}
%\int_\Sigma \Delta_{x'} h(x') dx' = \int_{\partial\Sigma} \nabla_{x'} h(x') \cdot n_{\partial\Sigma}(x') d\partial \Sigma = 0,
%\end{equation}
%whence 
$\Delta_{x'} h = 0$ on $\Sigma$. Hence $h$ 
%solves the homogeneous Neumann problem
%\begin{equation}
%\Delta_{x'} h = 0, \; \text{ on } \Sigma, \qquad \nabla_{x'} h \cdot n_{\partial\Sigma} = 0, \; \text{ on } \partial\Sigma,
%\end{equation}
%whence $h$
has to be constant. We obtain that the kernel $N(A)$ is one-dimensional and $N(A) = \{ (u,h) : u = 0, h = const. \}.$ 
%Since the real parts of $\lambda$ and $\bar \lambda$ coincide, we also obtain that $\sigma(-A) \cap i\R = \{ 0 \}$. In particular, taking real parts in \eqref{9083746589374650387456} yields
%\begin{equation} \label{9083746fgsh589374650387456}
%\operatorname{Re} \lambda |u|_{L_2(\Omega)} + | (\mu^\pm)^{1/2} (Du+Du^\top) |_{L_2(\Omega)} + \sigma \operatorname{Re} \lambda | \nabla_{x'} h|_{L_2(\Sigma)}^2 + | \nabla \eta |_{L_2(\Omega)} = 0,
%\end{equation}
%whence $\operatorname{Re} \lambda < 0$.
Taking real parts in \eqref{9083746589374650387456} yields $\operatorname{Re} \lambda \leq 0$. We also easily obtain that $\sigma(-A) \cap i\R = \{ 0 \}$, hence
$\sigma(-A) \backslash \{ 0 \}  \subset \mathbb C_-$. Next we show that the eigenvalue $\lambda = 0$ is semi-simple. Pick $z = (u,h) \in N(A^2)$. Then $z_1 := Az \in N(A)$, hence $z_1 = (0,h_1)$ and $h_1$ is constant. The problem for $z = (u,h)$ now reads as
\begin{equation}\label{03947fghsfdfdfdfghs5555555555555rrtrh534}
\begin{alignedat} {2}
- \mu \Delta u  + \nabla p &= 0, &\text{in } \Omega \backslash \Sigma,  \\
\operatorname{div} u &= 0, &\text{in } \Omega \backslash \Sigma,  \\
 - \ljump \mu (Du + Du^\top) \rjump e_3 + \ljump p \rjump e_3 - \sigma \Delta_{x'} h e_3 &=  0, & \text{on } \Sigma,  \\
\ljump u \rjump &= 0, & \text{on } \Sigma, \\
  - u_3 + \ljump \partial_3 \eta \rjump &=  h_1 , & \text{on } \Sigma, \\
(\nabla_{x'} h, -1)^\top \cdot \nu_{S_1} &= 0 & \text {on } \partial \Sigma,  \\
\Delta \eta &= 0, &\text {in } \Omega \backslash \Sigma, \\
\eta|_{\Sigma} + \sigma \Delta_{x'} h &= 0, & \text {on } \Sigma, \\
\nu_{\partial\Omega} \cdot \nabla \mu|_{\partial\Omega} &= 0, \quad\qquad\qquad& \text{on } \partial\Omega \backslash\Sigma, \\
P_{S_1} \left( \mu (Du + Du^\top) \nu_{S_1} \right) &= 0, & \text{on } S_1 \backslash \partial\Sigma, \\
u \cdot \nu_{S_1} &= 0, & \text {on } S_1 \backslash \partial \Sigma.\\
u &= 0, & \text{on } S_2. 
\end{alignedat}
\end{equation}
Integrating $\eqref{03947fghsfdfdfdfghs5555555555555rrtrh534}_5$ over $\Sigma$ and using the fact that $h_1$ is constant yields that $h_1 = 0$, since the other terms are mean value free. This yields that $(u,h) \in N(A)$, whence $N(A^2) \subset N(A)$. Since $A$ has compact resolvent, $R(A)$ is closed in $X_0$ and $\lambda = 0$ is a pole of $(\lambda - A)^{-1}$. Therefore $\lambda = 0$ is semi-simple, cf. \cite{lunardioptimal}, and $X_0 = N(A) \oplus R(A)$. Since also $\sigma(A|_{R(A)} ) \subset \mathbb C_+$ we obtain that the restricted semigroup $e^{-At}|_{R(A)}$ is exponentially stable.
\end{proof}
%In particular, we have the following observation. Since the semigroup $e^{-At}|_{R(A)}$ is analytic in $X_0$ and exponentially stable, we obtain that the resolvent set $\rho(-A) $ contains a sector with opening angle $\pi/2 + \delta$ for some $\delta > 0$, that is, $\Sigma_{\pi/2 + \delta} \subset \rho(-A)$. Since we also know that the spectrum consists of isolated eigenvalues only and $\lambda = 0$ is an eigenvalue, there exists $\varepsilon > 0$ such that $\sigma(-A) \backslash \{ 0 \} \subset \mathbb C_- \backslash B_\varepsilon (0)$. In particular, there exists a small number $\kappa > 0$, such that $\sigma (-A) \subset \{  z \in \mathbb C : \operatorname{Re} z \leq - \kappa < 0 \}$.

%Therefore, it makes sense to consider the restriction of $A$ to functions $h$ with mean value zero. 
Define now a linear operator $L : D(L) \subset X_1 \pfeil \tilde X_0$ by $L(u,h) := A(u,h)$,
where
\begin{equation*}
D(L) := D(A) \cap \{ (u,h) \in X_1 : (h,1)_{L_2(\Sigma)} = 0 \},
\end{equation*}
and $\tilde X_0 := X_0 \cap \{ (u,h) \in X_0 : P_0^\Sigma h = 0 \}$. Hereby $$P_0^\Sigma h := \frac{1}{| \Sigma |} \int_\Sigma h dx.$$
Then $L$ is well-defined and $\sigma(-L) \subset \{ \lambda \in \mathbb C : \operatorname{Re} \lambda \leq -\kappa < 0 \}$ for some $\kappa > 0$, since we have a spectral gap.
\subsection{Parametrization of the nonlinear phase manifold}
We recall, cf. \eqref{930487547650487gg5}, that the transformed equations around the trivial equilibrium $(0,\Sigma) \in \mathcal E$ read as
\begin{equation} \label{9304875476504rtshh87gg5}
\begin{alignedat}{2} 
\rho \partial_t  u  - \mu \Delta u + \nabla p &= F_u(h,u,p) , &&\text{in } \Omega \backslash \Sigma, \\
\operatorname{div} u &= G_d(h,u), &&\text{in } \Omega \backslash \Sigma, \\
- \ljump \mu (Du + Du^\top) -pI \rjump e_3  &=  \sigma \Delta_{x'} h e_3 + G_S(h,u,p), && \text{on } \Sigma,   \\
\ljump u \rjump &= 0, && \text{on } \Sigma, \\
\partial_t h &= u_3 - \ljump \partial_3 \eta \rjump + G_\Sigma(h,u,\eta),\quad\quad && \text{on } \Sigma, \\
(-\nabla_{x'} h, 0)^\top \cdot \nu_{S_1} &= 0, && \text {on } \partial \Sigma, \\
\Delta \eta &= G_c(h,\eta), &&\text {in } \Omega \backslash \Sigma, \\
\eta|_{\Sigma} - \sigma \Delta_{x'} h &= G_\kappa (h), && \text {on } \Sigma, \\
\nu_{\partial\Omega} \cdot \nabla \eta|_{\partial\Omega} &= G_N(h,\eta), && \text{on } \partial\Omega \backslash\Sigma, \\
P_{S_1} \left( \mu (Du + Du^\top) \nu_{S_1} \right) &= 0, && \text{on } S_1 \backslash \partial\Sigma, \\
u \cdot \nu_{S_1} &= 0, && \text {on } S_1 \backslash \partial \Sigma, \\
u &= 0, && \text{on } S_2, \\
u(0) &= u_0, && \text{on } \Omega \backslash \Sigma, \\
h (0) &= h_0, && \text{on } \Sigma,
\end{alignedat}
\end{equation}
where $F_u(h,u,p) := a^\pm(h;D_x)(u,p) + \bar a(h,u)$, cf. \eqref{930487547650487gg5}. The nonlinear phase manifold is given by
\begin{align*}
\mathsf{PM} := \{ &(u,h) \in W^{2-2/r}_r(\Omega \backslash \Sigma) \cap B^{4-1/q-3/p}_{qp}(\Sigma) : \div u = G_d, \\  &P_\Sigma(\mu^\pm(Du+Du^\top)e_3) = ((G_S)_{1,2},0) , \; \ljump u \rjump = 0, \; (\nabla_{x'} h | n_{\partial\Sigma} ) = 0, \\ &(h|1)_{L_2(\Sigma)} = 0, \; P_{S_1} \left( \mu^\pm (Du + Du^\top) \nu_{S_1} \right) = 0, \; u \cdot \nu_{S_1} = 0, \; u|_{S_2} = 0 \}
\end{align*}
as a subset of $X_\gamma := W^{2-2/r}_r(\Omega \backslash \Sigma) \cap B^{4-1/q-3/p}_{qp}(\Sigma)$. The linear phase manifold is given by
\begin{align*}
\mathsf{PM}_0 := \{ &(u,h) \in W^{2-2/r}_r(\Omega \backslash \Sigma) \cap B^{4-1/q-3/p}_{qp}(\Sigma) : \div u = 0, \\  &P_\Sigma(\mu^\pm(Du+Du^\top)e_3) = 0, \; \ljump u \rjump = 0, \; (\nabla_{x'} h | n_{\partial\Sigma} ) = 0, \\ &(h|1)_{L_2(\Sigma)} = 0, \; P_{S_1} \left( \mu^\pm (Du + Du^\top) \nu_{S_1} \right) = 0, \; u \cdot \nu_{S_1} = 0, \; u|_{S_2} = 0 \}.
\end{align*}
We now refer to Section 4.2 in \cite{wilkehabil}, where it is shown that there is a local parametrization of $\mathsf{PM}$ over $\mathsf{PM}_0$ around zero. More precisely there is a small $r > 0$, such that for every $(u_0,h_0) \in B(r,0) \subset \mathsf{PM}$ there is a $C^2$-function $\phi$ and a decomposition
\begin{equation} \label{uuu90q3846708w76bfg}
(u_0,h_0) = (\tilde u_0, \tilde h_0) + (\phi(\tilde u_0,\tilde h_0) , 0 ) , \quad (\tilde u_0, \tilde h_0) \in \mathsf{PM}_0.
\end{equation}
For details we refer to Proposition 4.3 and Section 4.2 in \cite{wilkehabil}.
\subsection{Convergence to equilibria}
We now state and prove the main result.
\begin{theorem}
The trivial equilibrium $(0,\Sigma) \in \mathcal E$ is stable in the following sense. For each $\varepsilon >0$ there exists some $\delta = \delta(\varepsilon) >0$ such that for all initial values $(u_0,h_0) \in X_\gamma \cap \mathsf{PM}$ satisfying
\begin{equation} \label{90347650387465084765gg}
| u_0 |_{W^{2-2/r}_r(\Omega \backslash \Sigma)} + |h_0|_{B^{4-1/q-3/p}_{qp}(\Sigma)} \leq \delta(\varepsilon),
\end{equation}
there exists some global in time solution
\begin{gather*}
u \in W^1_r(\R_+;L_r(\Omega)) \cap L_r(\R_+; W^2_r(\Omega \backslash \Sigma)), \\ h \in W^1_p(\R_+; W^{1-1/q}_q(\Sigma)) \cap L_p(\R_+;W^{4-1/q}_q(\Sigma)),
\end{gather*}
such that 
\begin{equation*}
| u(t) |_{W^{2-2/r}_r(\Omega \backslash \Sigma)} + |h(t)|_{B^{4-1/q-3/p}_{qp}(\Sigma)} \leq \varepsilon, \quad t \in \R_+.
\end{equation*}
Moreover, 
\begin{equation*}
| u(t) |_{W^{2-2/r}_r(\Omega \backslash \Sigma)} + |h(t)-P_0^\Sigma h_0|_{B^{4-1/q-3/p}_{qp}(\Sigma)} \pfeil_{t \pfeil \infty} 0,
\end{equation*}
where $P_0^\Sigma h_0 = \frac{1}{|\Sigma|}\int_\Sigma h_0$ is the mean value of $h_0$.
The convergence is at an exponential rate.
\end{theorem}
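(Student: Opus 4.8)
The plan is to deduce the assertion from the generalized principle of linearized stability for quasilinear parabolic problems, in the spirit of \cite{pruessbuch} and \cite{wilkehabil}, adapted to the mixed $L_r$--$L_q$ scale of this paper.

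\emph{Step 1: the reduced quasilinear equation.} Using the solution operators $T_1,T_2$ for the pressure from Lemma A.7 in \cite{wilkehabil} and the elliptic solution operator for the chemical potential, we eliminate $p$ and $\eta$ from \eqref{9304875476504rtshh87gg5}. The system then takes the abstract form \eqref{8937046580354765}, $\dot z(t) + A z(t) = N(z(t))$, $z(0) = z_0 = (u_0,h_0)$, where $z = (u,h)$, $A$ is the operator of Lemma \ref{39840567349576340576834065734}, and $N$ collects $F_u, G_d, G_S, G_\Sigma, G_c, G_\kappa, G_N$ together with the commutator terms produced by the pressure/chemical-potential reconstruction. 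By the differentiability statements \eqref{8888882222}--\eqref{8888882222b} and the estimates of Lemma \ref{23098476384756}, $N$ is of class $C^1$ on a neighbourhood of $0$ in $X_\gamma$, maps into the data space $\,{}_0\mathbb F(T)$, satisfies $N(0) = 0$ and $DN(0) = 0$, and obeys a contraction bound with gain $T^\alpha + |z_1| + |z_2|$. The admissibility of the initial data is handled by the parametrization \eqref{uuu90q3846708w76bfg} of $\mathsf{PM}$ over $\mathsf{PM}_0$: writing $z_0 = (\tilde u_0,\tilde h_0) + (\phi(\tilde u_0,\tilde h_0),0)$ with $(\tilde u_0,\tilde h_0)\in\mathsf{PM}_0$, the $\phi$-term enters the equation only as an extra, quadratically small nonlinear contribution.

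\emph{Step 2: spectral reduction and conservation of the mean value.} Integrating $\eqref{9304875476504rtshh87gg5}_5$ over $\Sigma$ and using that $u_3$, $\ljump \partial_3\eta\rjump$ and $G_\Sigma(h,u,\eta)$ are mean-value free (equivalently, using conservation of the phase volumes $|\Omega^\pm(t)|$) gives $\tfrac{d}{dt}\int_\Sigma h(t)\,dx = 0$, so $P_0^\Sigma h(t) = P_0^\Sigma h_0 =: c$ for all $t$. Since every constant height is an equilibrium, $(0,c)\in\mathcal E$, and the linearization relevant for $z - (0,c)$ acts on the complemented state space $\tilde X_0 = X_0 \cap \{P_0^\Sigma h = 0\}$ as the operator $L$ of Section 6.1, for which $\sigma(-L)\subset\{\lambda\in\mathbb C:\operatorname{Re}\lambda\le-\kappa<0\}$. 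By Lemma \ref{39840567349576340576834065734} the eigenvalue $0$ of $-A$ is semisimple with $N(A)=\{(0,h): h\text{ const.}\}=T_{(0,\Sigma)}\mathcal E$, $X_0 = N(A)\oplus R(A)$, and $e^{-At}|_{R(A)}$ is exponentially stable; thus $w := z - (0,c)$ solves a quasilinear problem on $\tilde X_0$ governed by the exponentially stable generator $L$.

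\emph{Step 3: global fixed point and exponential decay.} Fix $\omega\in(0,\kappa)$ and set up the fixed-point map for $w$ on $\R_+$ in the exponentially weighted maximal-regularity space $e^{\omega t}\,{}_0\mathbb E(\R_+)$: write $w$ as the sum of the projected free evolution $e^{-Lt}\,w(0)$ and the variation-of-constants term driven by $N$, inverting the linear part via the half-line maximal-regularity estimate for $L$ (the analogue of Theorem \ref{maxregodfpgudfphgu} on $\R_+$ with weight $e^{\omega t}$, available because of the spectral gap). Since $N(0)=0$ and $DN(0)=0$, the map $w\mapsto e^{\omega t}N(w+\cdots)$ is a genuine self-contraction on a small ball of $e^{\omega t}\,{}_0\mathbb E(\R_+)$ once $|u_0|_{W^{2-2/r}_r(\Omega\backslash\Sigma)} + |h_0|_{B^{4-1/q-3/p}_{qp}(\Sigma)} \le \delta(\varepsilon)$ is small. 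The unique fixed point is a global solution with $e^{\omega t}(u(t),h(t)-c)\in{}_0\mathbb E(\R_+)$, hence by ${}_0\mathbb E(\R_+)\into BUC(\R_+;X_\gamma)$ one obtains $|u(t)|_{W^{2-2/r}_r(\Omega\backslash\Sigma)} + |h(t)-c|_{B^{4-1/q-3/p}_{qp}(\Sigma)} \le C e^{-\omega t}\,\delta(\varepsilon)$. Choosing $\delta(\varepsilon)$ with $C\delta(\varepsilon)\le\varepsilon$ yields the stability statement, and the displayed bound is exactly the claimed exponential convergence to the equilibrium $(0,P_0^\Sigma h_0)\in\mathcal E$.

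\emph{Main obstacle.} The conceptual template is the theorem on normally stable equilibria, but it does not apply off the shelf because $u$ and $h$ live in genuinely different integration scales; the crux is to establish the weighted maximal-regularity estimate for the linearization on $\R_+$ in this mixed $L_r$--$L_q$ setting (the half-line analogue of Theorem \ref{maxregodfpgudfphgu} for $L$), and then to push all the nonlinear bounds of Lemma \ref{23098476384756} through to $T=\infty$ with the weight. The delicate terms are, as in the local theory, the curvature contributions $\nabla h\cdot\nabla^2 h$ controlled in $\mathbb F_3(T)$ via the product estimate of \cite{meyriesveraarpointwise}, and the mixed transport term $(\rho^+-\rho^-)(\nabla_h\eta\cdot\nabla_h)u$; uniform-in-time control of these forces the smallness of $\omega$ and of the data, but introduces no essentially new difficulty once the weighted linear estimate is in hand.
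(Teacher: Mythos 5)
Your high-level skeleton (spectral gap of $L$ on the mean-value-free subspace, exponentially weighted spaces on $\R_+$, smallness of the data) matches the paper's strategy, but there is a genuine gap in Steps~1 and~3: the transformed problem \eqref{9304875476504rtshh87gg5} is \emph{not} of the semilinear form $\dot z + Az = N(z)$ with $N$ taking values in $X_0$, so the Duhamel/variation-of-constants fixed point with the semigroup $e^{-Lt}$ that you propose is not available. The nonlinearities $G_d$, $G_S$, $G_\kappa$, $G_c$, $G_N$ enter as inhomogeneities in the divergence constraint, the stress transmission condition, the Gibbs--Thomson relation and the Neumann condition for $\eta$; they live in the boundary-data components of ${_0\mathbb F}$ (you say so yourself), and such data cannot be absorbed into an $X_0$-valued right-hand side without losing exactly the regularity the contraction needs. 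One must therefore work with the full inhomogeneous maximal-regularity solution operator (the half-line, exponentially weighted analogue of the operator $\mathsf L$, respectively the shifted operator $L_\omega$), not with $e^{-Lt}$ and a mild formulation. Related to this, your treatment of the initial data is too optimistic: $(u_0,h_0)\in\mathsf{PM}$ satisfies the \emph{nonlinear} compatibility conditions ($\div u_0 = G_d(h_0,u_0)$, etc.), so it is not admissible for the weighted linear solution map, and the parametrization \eqref{uuu90q3846708w76bfg} does not enter "only as a quadratically small nonlinear contribution to the equation" -- it is a statement about the initial value, and by itself it does not reconcile the nonlinear compatibility conditions at $t=0$ with the linear ones.

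The paper's proof is built precisely to close these two holes: the solution is split as $(u,h) = (0,P_0^\Sigma h_0) + (\tilde u,\tilde h) + (\bar u,\bar h)$, where the linearly compatible part $(\tilde u_0,\tilde h_0)\in\mathsf{PM}_0$ feeds the \emph{linear} system \eqref{eetzuetrththzu87gg5} governed by the exponentially stable $L$, while all nonlinearities and the remainder $(\phi(\tilde u_0,\tilde h_0),0)$ of the initial value go into the $\omega$-shifted system \eqref{eetzuetrthhzu87gg5}, whose operator $L_\omega$ is invertible without a projection; the compatibility conditions at $t=0$ for that system are restored by the extension operator $\operatorname{ext}_\beta$ through the modified nonlinearity $M$ (so that $M|_{t=0}=N(u_0,h_0,0,0)$), and the problem is then closed by the implicit function theorem in the weighted spaces rather than by a direct self-map contraction. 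Note also that the paper never linearizes at $(0,P_0^\Sigma h_0)$: the constant $w_\infty=(0,P_0^\Sigma h_0,0,0)$ is kept inside the nonlinearity, which avoids your implicit (and unjustified as stated) identification of the linearization at $(0,c)$ with $L$; that discrepancy is only $O(c)$ and could be absorbed, but your write-up does not address it. To repair your argument you would essentially have to reproduce this decomposition, or at least (i) prove a weighted half-line maximal regularity theorem for the full inhomogeneous data space and (ii) build a compatibility-resolving correction of the initial data -- which is exactly the content of the paper's proof.
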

\begin{proof} We follow the lines of \cite{abelswilke} and \cite{wilkehabil}.
Let $\varepsilon >0$ be given and $(u_0,h_0) \in X_\gamma \cap \mathsf{PM}$ such that the smallness condition \eqref{90347650387465084765gg} holds for some $\delta > 0$ to be specified later. By \eqref{uuu90q3846708w76bfg}, we can decompose the initial data
\begin{equation*}
(u_0,h_0) =(0, P_0^\Sigma h_0) +  (\tilde u_0, \tilde h_0) + (\phi(\tilde u_0,\tilde h_0) , 0 ) ,
\end{equation*}
where $(\tilde u_0, \tilde h_0) + (\phi(\tilde u_0,\tilde h_0) , 0 ) \in \mathsf{PM}$ and $ (\tilde u_0, \tilde h_0) \in \mathsf{PM}_0$. 
%Note that we need to subtract the mean value of $h_0$ first to obtain that $(u_0, h_0 - P_0^\Sigma h_0) \in \mathsf{PM}$.
We now want to decompose the solution $(u(t),h(t))$ suitably and write
\begin{equation*}
(u(t),h(t)) = (0, P_0^\Sigma h_0) + ( \tilde u(t), \tilde h(t)) + ( \bar u (t), \bar h(t) ), \quad t \in \R_+,
\end{equation*}
where $(\tilde u(t), \tilde h(t)) \in \mathsf{PM}_0$ for $t \in \R_+$, and estimate each term separately. We consider the two coupled systems
\begin{equation} \label{eetzuetrthhzu87gg5} 
\begin{alignedat}{2} 
 \omega \rho \bar u + \rho \partial_t  \bar u  - \mu^\pm \Delta \bar u + \nabla \bar \pi &= F_u(P_0^\Sigma h_0 +\tilde h + \bar h,\tilde u + \bar u,\tilde \pi + \bar \pi) ,  &\text{in } \Omega \backslash \Sigma, \\
\operatorname{div} \bar u &= G_d(P_0^\Sigma h_0 +\tilde h + \bar h,\tilde u + \bar u), &\text{in } \Omega \backslash \Sigma, \\
-P_\Sigma ( \ljump \mu^\pm (D \bar u+ D \bar u^\top) e_3 \rjump ) &= G^\parallel_S (P_0^\Sigma h_0 +\tilde h + \bar h,\tilde u + \bar u), &\text{on } \Sigma, \\
- 2 \ljump \mu^\pm \partial_3 \bar u_3 \rjump + \ljump \bar \pi \rjump - \sigma \Delta_{x'} \bar h &=   G_S^\perp (P_0^\Sigma h_0 +\tilde h + \bar h,\tilde u + \bar u), &\text{on } \Sigma,  \\
\ljump \bar  u \rjump &= 0, & \text{on } \Sigma, \\
\omega \bar h + \partial_t \bar h -\bar u_3 + \ljump \partial_3 \bar \eta \rjump &=   G_\Sigma(P_0^\Sigma h_0 + \tilde h + \bar h, \tilde u + \bar u, \tilde \eta + \bar \eta), & \text{on } \Sigma, \\
(-\nabla_{x'} \bar h, 0)^\top \cdot \nu_{S_1} &= 0, & \text {on } \partial \Sigma, \\
\Delta \bar \eta &= G_c(P_0^\Sigma h_0 +\tilde h + \bar h,\tilde\eta+\bar \eta), & \text{in }\Omega \backslash \Sigma, \\
\bar \eta|_{\Sigma} - \sigma \Delta_{x'} \bar h &= G_\kappa (P_0^\Sigma h_0 +\tilde h + \bar h), & \text{on } \Sigma, \\
\nu_{\partial\Omega} \cdot \nabla \bar \eta|_{\partial\Omega} &= G_N(P_0^\Sigma h_0 +\tilde h + \bar h,\tilde \eta + \bar \eta), & \text{on } \partial\Omega \backslash\Sigma, \\
P_{S_1} \left( \mu^\pm (D\bar u + D\bar u^\top) \nu_{S_1} \right) &= 0, &\text{on } S_1 \backslash \partial\Sigma, \\
\bar u \cdot \nu_{S_1} &= 0, & \text{on } S_1 \backslash \partial \Sigma, \\
\bar u &= 0, & \text{on } S_2, \\
\bar u(0) &= \phi(\tilde u_0, \tilde  h_0), & \text{on } \Omega \backslash \Sigma, \\
\bar h (0) &= 0, & \text{on } \Sigma, 
\end{alignedat}
\end{equation}
where $\omega >0$, and
\begin{equation} \label{eetzuetrththzu87gg5}
\begin{alignedat}{2} 
  \rho \partial_t  \tilde u  - \mu^\pm \Delta \tilde u + \nabla \tilde \pi &= \omega \rho (I-T_1)\bar u , &\text{in } \Omega \backslash \Sigma, \\
\operatorname{div} \tilde u &= 0, &\text{in } \Omega \backslash \Sigma, \\
-P_\Sigma ( \ljump \mu^\pm (D \tilde u+ D \tilde u^\top) e_3 \rjump ) &= 0, &\text{on } \Sigma, \\
- 2 \ljump \mu^\pm \partial_3 u_3 \rjump + \ljump \tilde \pi \rjump - \sigma \Delta_{x'} \tilde h &=   0, &\text{on } \Sigma,  \\
\ljump \tilde  u \rjump &= 0, & \text{on } \Sigma, \\
 \partial_t \tilde h -\tilde u_3 + \ljump \partial_3 \tilde \eta \rjump &=   \omega ( \bar h - P_0^\Sigma \bar h), \qquad\qquad & \text{on } \Sigma,  \\
(-\nabla_{x'} \tilde h, 0)^\top \cdot \nu_{S_1} &= 0, & \text {on } \partial \Sigma, \\
\Delta \tilde \eta &= 0, &\text {in } \Omega \backslash \Sigma, \\
\tilde \eta|_{\Sigma} - \sigma \Delta_{x'} \tilde h &= 0, & \text {on } \Sigma, \\
\nu_{\partial\Omega} \cdot \nabla \tilde \eta|_{\partial\Omega} &= 0, & \text{on } \partial\Omega \backslash\Sigma, \\
P_{S_1} \left( \mu^\pm (D\tilde u + D\tilde u^\top) \nu_{S_1} \right) &= 0, & \text{on } S_1 \backslash \partial\Sigma, \\
\tilde u \cdot \nu_{S_1} &= 0, & \text {on } S_1 \backslash \partial \Sigma, \\
\tilde u &= 0, & \text{on } S_2, \\
\tilde u(0) &= \tilde u_0, & \text{on } \Omega \backslash \Sigma, \\
\tilde h (0) &= \tilde h_0 , & \text{on } \Sigma. 
\end{alignedat}
\end{equation}
Let us note a few things here. The right hand side of $\eqref{eetzuetrththzu87gg5}_1$ can equivalently be written as $\omega \rho (I-T_1) \bar u = \omega \rho \bar u - \omega \rho \nabla \bar q$, where $\bar q \in \dot H^1_r(\Omega \backslash \Sigma)$ is the unique solution of the weak transmission problem
\begin{align*}
(\nabla \bar q | \nabla \psi)_{L_2(\Omega)} &= ( \bar u  | \nabla \psi )_{L_2(\Omega)}, \quad && \text{for all } \psi \in W^1_{r'}(\Omega), \\
\ljump \bar q \rjump &= 0,& & \text{on } \Sigma.
\end{align*}
%This will be needed since $\nabla \bar \pi$ and $\nabla \tilde \pi$ solve 
%\begin{align}
%(\nabla \bar \pi | \nabla \psi)_{L_2(\Omega)} &= ( -\mu^\pm \Delta \bar u - \omega \bar u + F_u(P_0^\Sigma h_0 +\tilde h + \bar h,\tilde u + \bar u,\tilde \pi + \bar \pi) | \nabla \psi )_{L_2(\Omega)},&& \\ & &   \text{for all } \psi \in W^1_{r'}(\Omega),&  \\
%\ljump \bar \pi \rjump &= 2\ljump \mu^\pm \partial_3 \bar u_3 \rjump + \sigma \Delta_{x'} \bar h + G_S^\perp (P_0^\Sigma h_0 +\tilde h + \bar h,\tilde u + \bar u) , \text{on } \Sigma,&
%\end{align}
%and
%\begin{subequations}
%\begin{align}
%(\nabla \tilde \pi | \nabla \psi)_{L_2(\Omega)} &= ( \mu^\pm \Delta \tilde u  | \nabla \psi )_{L_2(\Omega)}, \quad & \text{for all } \psi \in W^1_{r'}(\Omega), \\
%\ljump \tilde \pi \rjump &= 0, & \text{on } \Sigma.
%\end{align}
%\end{subequations}
Furthermore, the initial value $\tilde h_0$ in \eqref{eetzuetrththzu87gg5} is mean value free.
% since $h_0 = P_0^\Sigma h_0 + \tilde h_0$, hence taking the projection $P_0^\Sigma$ yields $P_0^\Sigma \tilde h_0 = 0$.
Note that the right hand side of $\eqref{eetzuetrththzu87gg5}_6$ is mean value free as well, hence an integration of $\eqref{eetzuetrththzu87gg5}_6$ over $\Sigma$ yields that $\tilde h$ stays mean value free for all times $t >0$. In particular, we can equivalently rewrite \eqref{eetzuetrththzu87gg5} in the projected base space $\tilde X_0$ as
\begin{equation} \label{9384056348076gdf}
\frac{d}{dt} \tilde z (t)+ L\tilde z(t) = R(\bar z)(t), \quad t >0 , \quad z(0) = \tilde z_0 := (\tilde u_0, \tilde h_0).
\end{equation}
Here, $\tilde z := (\tilde u, \tilde h)$, $\bar z := (\bar u, \bar h)$ and $R(\bar z) := (\omega (I-T_1)\bar u, (I-P_0^\Sigma) \bar h )$.
Note that by Lemma \ref{39840567349576340576834065734}, the spectral bound of $-L$ satisfies $s(-L) \leq - \kappa < 0$ and the restricted semigroup $e^{-Lt}$ is exponentially stable on $\tilde X_0$. 
%By the variation of constants formula, we can solve problem \eqref{9384056348076gdf} on bounded intervals $J = (0,T)$, $0 < T < \infty$, by
%\begin{equation}
%\tilde  z(t) = e^{-Lt} \tilde z_0 + \int_0^t e^{-L(t-s)} R(\bar z) (s) ds, \quad t \in J.
%\end{equation}
%Hence the formula holds also on $\R_+$, supposed that $R(\bar z) \in L_r(\R_+; W^{2-2/r}_r(\Omega \backslash \Sigma)) \times L_p(\R_+; W^{1-1/q}_q(\Sigma))$.

We now solve this evolution equation in exponentially time-weighted spaces to get suitable decay estimates, cf. \cite{abelswilke} and \cite{wilkehabil}.
Let us introduce notation. Let
$
\mathbb E_u (\R_+) := H^1_r(\R_+; L_r (\Omega)) \cap L_r(\R_+ ; H^2_r(\Omega \backslash \Sigma))$ and 
$$
\mathbb E_h (\R_+) := W^1_p(\R_+; W^{1-1/q}_q(\Sigma)) \cap L_p(\R_+; W^{4-1/q}_q(\Sigma)).
$$
For $\beta \in [0, -s(-L) )$ define
\begin{gather*}
 e^{-\beta t}\mathbb E_u (\R_+)  := \{  w \in L_r(\R_+ ; L_r(\Omega)) : e^{\beta t} w \in \mathbb E_u (\R_+) \}, \\
e^{-\beta t}\mathbb E_h (\R_+)   := \{  w \in L_p(\R_+ ; L_q(\Omega)) : e^{\beta t} w \in \mathbb E_h (\R_+) \}.
\end{gather*}
In a similar way we define $e^{-\beta t} L_r(\R_+;L_r(\Omega))$.
% we say $w \in e^{-\beta t} L_r(\R_+;L_r(\Omega))$, if and only if $e^{\beta t} w \in L_r(\R_+;L_r(\Omega))$.
Since $0 \leq \beta < - s(-L)$, we obtain that for every 
\begin{equation*}
(f_u,f_h) \in e^{-\beta t}[ L_r(\R_+;L_r(\Omega)) \times L_p(\R_+;W^{1-1/q}_q(\Sigma)) ],
\end{equation*}
and $(\hat u_0, \hat h_0) \in X_\gamma$ 
there is a unique solution
$
(u,h) \in e^{-\beta t}[ \mathbb E_u (\R_+) \times \mathbb E_h (\R_+) ]
$
of the linear evolution problem 
\begin{equation*}
\partial_t (u,h) + L(u,h) = (f_u,f_h), \quad t \in \R_+, \quad (u,h)|_{t=0} = ( \hat u_0, \hat h_0),
\end{equation*}
by maximal regularity in exponentially time-weighted spaces.
Furthermore, there is some $M>0$ such that
\begin{equation*}
\begin{alignedat}{1}
| (u,h) &|_{ e^{-\beta t}[ \mathbb E_u (\R_+) \times \mathbb E_h (\R_+) ] } \\ &\leq M | (f_u,f_h,\hat u_0, \hat h_0 ) |_{ e^{-\beta t}[ L_r(\R_+;L_r(\Omega)) \times L_p(\R_+;W^{1-1/q}_q(\Sigma)) ] \times X_\gamma }.
\end{alignedat}
\end{equation*}
In particular, we may then easily solve \eqref{9384056348076gdf} in dependence of $\bar z = (\bar u, \bar h)$,
\begin{equation} \label{09376450837465078fdg}
(\tilde u, \tilde h ) = \left( \frac{d}{dt} + L , \operatorname{tr}|_{t=0} \right)^{-1} ( \omega (I-T_1)\bar u, (I-P_0^\Sigma)\bar h, \tilde u_0, \tilde h_0).
\end{equation}
Let us now discuss problem \eqref{eetzuetrthhzu87gg5}. For given $\omega > 0$, let $L_\omega$ be given by the left hand side of \eqref{eetzuetrthhzu87gg5} and $N$ the collection of nonlinearities on the right hand side. Then we can rewrite problem \eqref{eetzuetrthhzu87gg5} in the shorter form
\begin{equation*}
L_\omega \bar w = N(w_\infty + \tilde w + \bar w), \quad (\bar u, \bar h)(0) = ( \phi( \tilde u_0,\tilde h_0) , 0 ),
\end{equation*}
where $\bar w := (\bar u, \bar h , \bar \pi, \bar \eta)$, $\tilde w := (\tilde u, \tilde h, \tilde \pi, \tilde \eta)$ and $w_\infty := (0, P_0^\Sigma h_0 , 0 ,0)$. Note at this point that $w_\infty$ is constant and $N$ does not explicitly depend on $w_\infty$. Furthermore, due to the first part of the proof, $\tilde w$ depends only on $(\tilde u_0, \tilde h_0 , \bar u , \bar h)$, cf. \eqref{09376450837465078fdg}.

In order to solve problem \eqref{eetzuetrthhzu87gg5} we need to resolve the initial data and the compatibility conditions at $t=0$ properly. By solving certain auxiliary problems in exponentially weighted spaces, we may construct an extension operator
\begin{equation*}
\operatorname{ext}_\beta : \bar X_\gamma \pfeil e^{-\beta} [ \mathbb E_u(\R_+) \times \mathbb E_h(\R_+) ],
\end{equation*}
satisfying $\operatorname{ext}_\beta(v,g)|_{t=0} = (v,g)$ for all $(v,g) \in \bar X_\gamma$, where
\begin{align*}
\bar X_\gamma := \{ (u,h) \in X_\gamma : & u|_{S_2} = 0, \; (u|\nu_{S_1}) = 0,\; P_{S_1}(\mu^\pm(Du+Du^\top)\nu_{S_1}) = 0, \\ &  \ljump u \rjump = 0, \; (\nabla_{x'} h | \nu_{\partial\Sigma} ) = 0 \},
\end{align*}
cf. \cite{wilkehabil}. Now define
\begin{equation*}
M(\tilde u_0, \tilde h_0, \bar w) := N(w_\infty + \tilde w + \bar w + \operatorname{ext}_\beta[ (\phi(\tilde u_0,\tilde h_0) , 0 ) - (\bar u(0),\bar h(0)) ] ).
\end{equation*}
By construction,
$
M(\tilde u_0, \tilde h_0, \bar w)|_{t= 0} = 
N( u_0, h_0 , 0 ,0 ).
$
This allows us to solve the problem
\begin{equation*}
L_\omega \bar w = M(\tilde u_0, \tilde h_0, \bar w), \quad (\bar w_1, \bar w_2)|_{t=0} = (\phi(\tilde u_0, \tilde h_0),0),
\end{equation*}
by the implicit function theorem, since all relevant compatibility conditions at $t=0$ are satisfied. Following the lines of \cite{abelswilke}, we obtain that there is some small $\rho > 0$ and a ball $B(0,\rho) \subset X_\gamma \cap \mathsf{PM}_0$, such that there is a $\Phi \in C^1(B(0,\rho) ; e^{-\beta t} [ \mathbb E_u(\R_+) \times \mathbb E_h(\R_+) \times \mathbb E_\pi (\R_+) \times \mathbb E_\eta (\R_+) ] )$ satisfying $\bar w = \Phi(\tilde u_0, \tilde h_0)$. By construction, $\bar w$ is the solution of \eqref{eetzuetrthhzu87gg5}. Here, $\mathbb E_\pi (\R_+ ) := L_r(\R_+; \dot H^1_r(\Omega \backslash \Sigma))$, $\mathbb E_\eta (\R_+) := L_p(\R_+; W^2_q(\Omega \backslash \Sigma))$.

We then obtain that the convergence $(u(t),h(t)) \pfeil (0, P_0^\Sigma h_0)$ in $X_\gamma$ is at an exponential rate. The proof is complete.
\end{proof}

\section*{Acknowledgements} M.R. would like to thank Harald Garcke for pointing out existing work in \cite{aggthermo} and Helmut Abels for inspiring discussions regarding the boundary conditions. The work of M.R. is financially supported by the DFG graduate school GRK 1692. The support is gratefully acknowledged.
\bibliographystyle{plain}
\bibliography{bibo}
\end{document}